\documentclass[11pt,reqno]{amsart}
\usepackage[T1]{fontenc}
\usepackage{lmodern}
\usepackage{tikz}
\usepackage{comment}
\usepackage{amsmath,amssymb,mathtools,mathrsfs}
\usepackage{microtype}
\usepackage{needspace}
\usepackage[a4paper,textwidth=163mm,textheight=238mm,centering]{geometry}
\usepackage{xcolor}
\usepackage[
colorlinks=true,urlcolor=blue,linkcolor=blue,citecolor=magenta]{hyperref}
\usepackage[lite,initials]{amsrefs}
\numberwithin{equation}{section}
\newtheorem{theorem}{Theorem}[section]
\newtheorem{lemma}[theorem]{Lemma}
\newtheorem{proposition}[theorem]{Proposition}
\newtheorem{corollary}[theorem]{Corollary}
\theoremstyle{remark}
\newtheorem{remark}[theorem]{Remark}
\allowdisplaybreaks[2]
\hypersetup{pdftitle={Curvature quotient flows and Alexandrov--Fenchel inequalities in hyperbolic space}}
\title[Alexandrov--Fenchel inequalities in hyperbolic space]{Curvature quotient flows and Alexandrov--Fenchel inequalities in hyperbolic space}

\author[T. Luo]{Tianci Luo}
\author[Y. Wei]{Yong Wei}
\address{School of Mathematical Sciences, University of Science and Technology of China, Hefei 230026, P.R. China}
\email{Luo\_tianci@mail.ustc.edu.cn}
\email{yongwei@ustc.edu.cn}

\author[R. Zhang]{Ruijia Zhang}
\address{Department of Mathematics, Sun Yat-sen University, Guangzhou, 510275, China}
\email{zhangrj76@mail.sysu.edu.cn}

\author[R. Zhou]{Rong Zhou}
\address{School of Mathematical Sciences, University of Science and Technology of China, Hefei 230026, P.R. China}
\email{zhourong@mail.ustc.edu.cn}

\date{\today}
\subjclass[2020]{53E10, 52A39, 53C42}
\keywords{Quermassintegral preserving flow, Alexandrov--Fenchel inequality, hyperbolic space, positive sectional curvature, curvature quotient}
\begin{document}

\begin{abstract}
We study quermassintegral-preserving curvature flows in
$\mathbb H^{n+1}$, $n\geq3$, driven by the curvature quotient
$(E_k/E_\ell)^{1/(k-\ell)}$. For $1\leq\ell<k$ with $k+\ell\leq n$, we prove that
positive sectional curvature is preserved.
Starting from a smooth closed strictly convex hypersurface
with positive sectional curvature, the flow exists for
all time and converges smoothly and exponentially
to a geodesic sphere.
As an application, we establish sharp Alexandrov--Fenchel
inequalities between $W_k$ and $W_\ell$ for smooth bounded
convex domains whose boundaries have nonnegative sectional
curvature, whenever $0\leq\ell<k\leq n$ and $k+\ell\leq n$.
Equality holds if and only if the domain is a geodesic ball.
Together with previously known cases, these inequalities
give the full family of Alexandrov--Fenchel inequalities
for this class of domains in $\mathbb H^6$.
We also construct counterexamples showing that
$k+\ell\leq n$ is the optimal range for preservation
of positive sectional curvature by these flows.
\end{abstract}
\maketitle
\tableofcontents

\section{Introduction}\label{sec-intro}
The Alexandrov--Fenchel inequalities are fundamental inequalities in
convex geometry. In Euclidean space, they give sharp comparisons
between the quermassintegrals of a convex body, with equality precisely
for balls. We refer to Schneider \cite{Schneider} for the classical
theory. Curvature flows provide an important approach to these
inequalities by deforming a hypersurface towards a sphere while
preserving one geometric quantity and decreasing another. Important
examples include the volume-preserving mean curvature flow of Huisken
\cite{Huisken}, the mixed-volume-preserving curvature flows of McCoy
\cite{McCoy}, and the inverse curvature flow of Guan and Li
\cite{GuanLi2009}. The latter also gives the quermassintegral
inequalities for suitable $k$-convex star-shaped domains in Euclidean
space.

For convex domains in hyperbolic space, the quermassintegrals are
naturally defined by integral geometry. Unlike their Euclidean
counterparts, their curvature integral expressions contain additional
lower order terms arising from the ambient curvature. We refer to
Santal\'o \cite{Santalo} and Solanes \cite{Solanes2006} for the
corresponding integral geometric formulas.

Let $\mathbb H^{n+1}$ denote the simply connected space form of constant
sectional curvature $-1$. Let $\Omega\subset\mathbb H^{n+1}$ be a smooth
bounded convex domain with boundary $M=\partial\Omega$, and let
$\kappa=(\kappa_1,\ldots,\kappa_n)$ be its principal curvatures with
respect to the outward unit normal $\nu$. We write
\begin{equation*}
 E_j(\kappa)=\binom nj^{-1}\sigma_j(\kappa),
 \qquad
 \sigma_j(\kappa)=\sum_{i_1<\cdots<i_j}
                  \kappa_{i_1}\cdots\kappa_{i_j},
 \qquad E_0=1.
\end{equation*}
Let $W_j$, $0\leq j\leq n$, denote the hyperbolic quermassintegrals
in the normalization of Andrews, Chen and Wei \cite{ACW}.
Their definition and first variation are recalled in
Section~\ref{subsec-quermassintegrals}.
For a geodesic ball $B_r$ of radius $r>0$, set
\begin{equation*}
 f_j(r)=W_j(B_r),\qquad 0\leq j\leq n.
\end{equation*}
These functions are strictly increasing. The hyperbolic
Alexandrov--Fenchel problem is to establish the sharp comparison
\begin{equation}\label{intro-AF-problem}
 W_j(\Omega)\geq f_j\circ f_\ell^{-1}\bigl(W_\ell(\Omega)\bigr),
 \qquad 0\leq \ell<j\leq n,
\end{equation}
with equality precisely for geodesic balls. Different positive
normalizations of the individual quermassintegrals do not change this
comparison. In particular, the normalization used in \cite{HL} is
$(n+1-j)/(n+1)$ times our $W_j$.

A smooth convex hypersurface is called horospherically convex if
$\kappa_i\geq1$ for every $i$. Under this assumption, Cabezas-Rivas and
Miquel \cite{CM} proved long-time existence and convergence of the
volume-preserving mean curvature flow. Wang and Xia \cite{WX}
established the full family of inequalities
\eqref{intro-AF-problem} by quermassintegral-preserving curvature
flows. Andrews and Wei \cite{AW} subsequently proved convergence for a
broad class of such flows. Hu, Li and Wei \cite{HLW} gave another proof
of the inequalities \eqref{intro-AF-problem} using locally constrained curvature flows.
Related sharp inequalities involving curvature integrals and
quermassintegrals were obtained by Ge, Wang and Wu \cite{GeWangWu}.

Several cases are known under weaker geometric assumptions.
Brendle, Guan and Li \cite{BGL} proved the comparison between $W_2$ and
$W_1$ for star-shaped mean-convex domains. See also
\cites{GuanLiSurvey,HLW}. Li, Wei and Xiong \cite{LWX} established the
comparison between $W_3$ and $W_1$ for star-shaped $2$-convex domains.
Hu and Li \cite{HLSectional} extended the odd-index quermassintegral
comparisons with $W_1$ obtained in \cite{GeWangWu} to hypersurfaces with
nonnegative intrinsic sectional curvature. 
At the other end of the index range, Brendle, Guan and Li
\cite{BGL} proved \eqref{intro-AF-problem} for smooth
strictly convex domains with $0\leq \ell<j=n$.
Andrews, Hu and Li \cite{AHL2020} used the harmonic mean
curvature flow to establish several comparisons with upper
index $n-1$ under strict convexity. Subsequently, Hu and Li
\cite{HL} established all comparisons with upper index
$n-1$ under the same assumption, using the duality between
hyperbolic and de Sitter space. Luo and Zhou \cite{LZ} gave another proof of the comparison
between $W_{n-1}$ and $W_0$ for smooth convex domains,
using a locally constrained Gauss curvature flow.

Of particular relevance to the present work is the volume-preserving
flow of Andrews, Chen and Wei \cite{ACW}. They proved long-time
existence and exponential convergence to a geodesic sphere for a class
of speeds including positive powers of the normalized mean curvatures,
assuming that the initial hypersurface has positive intrinsic
sectional curvature. For a convex boundary, the Gauss equation
identifies this assumption with
\begin{equation}\label{intro-positive-section}
 \kappa_i\kappa_j>1\qquad\text{for all }i\ne j.
\end{equation}
This condition does not require each principal curvature to be at
least one. Their convergence theorem yields
\begin{equation}\label{ACW-volume}
 W_j(\Omega)\geq f_j\circ f_0^{-1}\bigl(W_0(\Omega)\bigr),
 \qquad 1\leq j\leq n,
\end{equation}
for domains whose boundaries have positive sectional curvature, with equality precisely for geodesic balls. 
For the volume-preserving Gauss curvature flow, Wei, Yang
and Zhou \cite{WYZ2024} subsequently proved long-time
existence and smooth convergence for every positive power
of the Gauss curvature under strict convexity alone.
Their result also gives the comparison between $W_n$ and
$W_0$ for smooth weakly convex domains.


In this paper we consider $W_\ell$-preserving quotient flows with
$\ell\geq1$ and use them to obtain comparisons between
positive-index quermassintegrals under the sectional
curvature condition considered in \cite{ACW}.

\subsection{Main results}\label{subsec-main}
Our main geometric consequence is the following family of
Alexandrov--Fenchel inequalities.

\begin{theorem}\label{AF}
Let
$\Omega\subset\mathbb H^{n+1}$ be a smooth bounded convex domain
whose boundary has nonnegative sectional curvature, where $n\geq3$.
For every pair of integers $0\leq \ell<k\leq n$ with $k+\ell\leq n$,
\begin{equation}\label{AF-all}
 W_k(\Omega)\geq f_k\circ f_\ell^{-1}\bigl(W_\ell(\Omega)\bigr).
\end{equation}
Equality holds if and only if $\Omega$ is a geodesic ball.
\end{theorem}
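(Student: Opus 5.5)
The plan is to run the $W_\ell$-preserving curvature quotient flow announced in the abstract and use its convergence theorem, which in the paper precedes the application. Take $1\leq\ell<k$ with $k+\ell\leq n$, and first suppose $M=\partial\Omega$ is strictly convex with positive sectional curvature, i.e.\ $\kappa_i\kappa_j>1$ for $i\neq j$. I evolve $M$ by
\begin{equation*}
 \partial_t X=\bigl(\phi(t)-(E_k/E_\ell)^{1/(k-\ell)}\bigr)\nu,
\end{equation*}
with $\phi(t)$ chosen so that $W_\ell$ is constant. By the main convergence theorem, positive sectional curvature is preserved, the flow exists for all time, and it converges smoothly and exponentially to a geodesic sphere $\partial B_{r_\infty}$. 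Since $W_\ell$ is preserved, $f_\ell(r_\infty)=W_\ell(\Omega)$.

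Next I check that $W_k$ is monotone along the flow. By the first variation formula recalled in Section~\ref{subsec-quermassintegrals}, $\frac{d}{dt}W_j=c_j\int_M E_j\,(\phi-F)\,d\mu$ with $c_j>0$ and $F=(E_k/E_\ell)^{1/(k-\ell)}$. Preservation of $W_\ell$ forces $\phi=\int E_\ell F\big/\int E_\ell$. Then
\begin{equation*}
 \tfrac{1}{c_k}\tfrac{d}{dt}W_k=\frac{\int E_\ell F\int E_k-\int E_\ell\int E_kF}{\int E_\ell}.
\end{equation*}
Since $E_k=E_\ell F^{k-\ell}$, set $d\mu_\ell=E_\ell\,d\mu$ and $g=F^{k-\ell}$. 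The numerator becomes $\int F\,d\mu_\ell\int g\,d\mu_\ell-\int d\mu_\ell\int gF\,d\mu_\ell$. This is $\leq0$ by Chebyshev's inequality, because $F$ and $F^{k-\ell}$ are similarly ordered. Hence $W_k$ is nonincreasing, and letting $t\to\infty$ gives $W_k(\Omega)\geq f_k(r_\infty)=f_k\circ f_\ell^{-1}(W_\ell(\Omega))$. The case $\ell=0$ is already \eqref{ACW-volume} from \cite{ACW}. Alternatively, it follows from the above argument with $E_0=1$, provided the theorem covers $\ell=0$.

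To pass to weak hypotheses, let $\Omega$ be convex with nonnegative sectional curvature, so $\kappa_i\kappa_j\geq1$. I approximate $\Omega$ by smooth strictly convex domains with $\kappa_i\kappa_j>1$. A natural choice is the outer parallel domains $\Omega_\varepsilon$. Along parallel surfaces in $\mathbb H^{n+1}$ the curvatures transform by $\kappa\mapsto(\kappa+\tanh\varepsilon)/(1+\kappa\tanh\varepsilon)$. A direct check shows this map sends $\kappa_i\kappa_j\geq1$ to $\kappa_i\kappa_j>1$ (with $\kappa\geq0$) when $\varepsilon>0$. The quermassintegrals depend continuously on $\varepsilon$, so the inequality passes to the limit.

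The main obstacle is equality in the weak setting, since the limiting argument loses strictness. For equality I would follow the standard route. If equality holds, then $\kappa_i\kappa_j\geq1$ together with convexity gives that where some $\kappa_i=0$ all others are infinite, which is impossible. So $M$ is strictly convex and $E_\ell>0$. Then I run the flow for a short time, which is possible since the flow is well-posed for strictly convex data. Parabolic strong maximum principle arguments should make $\kappa_i\kappa_j>1$ for $t>0$. If $\Omega$ were not a ball, $F$ would be nonconstant at $t=0$ and the Chebyshev step would be strict, so $W_k$ would strictly decrease on $[0,\delta]$. This contradicts the inequality applied at time $\delta$. The delicate points are making this short-time strictification rigorous and verifying the parallel-surface step; I expect the equality characterization to be the hardest part.
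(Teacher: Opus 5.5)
Your overall architecture matches the paper's: the flow in the strictly positive case, an approximation for $\operatorname{sec}\geq0$, and a short-time flow plus rigidity for equality. However, the approximation you chose fails, and the equality step leans on an unproved claim.

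\textbf{The approximation.} Outer parallel surfaces never turn $\kappa_i\kappa_j\geq1$ into $\kappa_i\kappa_j>1$. With $\tau=\tanh\varepsilon$ and $\mu(\kappa)=(\kappa+\tau)/(1+\kappa\tau)$, one gets
\begin{equation*}
 \mu(\kappa_i)\mu(\kappa_j)-1=\frac{(1-\tau^2)(\kappa_i\kappa_j-1)}{(1+\tau\kappa_i)(1+\tau\kappa_j)}.
\end{equation*}
Equivalently, the Riccati equation $\partial_s\kappa=1-\kappa^2$ gives $\partial_s(\kappa_i\kappa_j-1)=-(\kappa_i+\kappa_j)(\kappa_i\kappa_j-1)$. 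So every flat principal plane of $\partial\Omega$ stays flat on every $\partial\Omega_\varepsilon$. Theorem~\ref{main-flow} requires strictly positive sectional curvature: the barrier in Theorem~\ref{preservation} starts at some $\varepsilon_0<\min S(\cdot,0)$, which must be positive. Hence it cannot be applied to $\Omega_\varepsilon$, and your weak inequality is unproved as written. You need a deformation that genuinely raises curvature. The paper uses Klein contraction, Lemma~\ref{Klein-contraction}: scaling the Klein image by $c<1$ gives $\kappa_i(M_c)\geq c^{-1}\kappa_i(M)$. Hence $\operatorname{sec}_{M_c}\geq c^{-2}-1>0$, and $M_c\to M$ smoothly as $c\uparrow1$.

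\textbf{The equality case.} The claim that a strong maximum principle makes $\kappa_i\kappa_j>1$ for $t>0$ is not supported by the available tools. Lemma~\ref{lem:third-order-nonnegative} only controls lower supports with zero spatial gradient. That suffices for a first-touching barrier argument, but not for a Hopf-type or strong maximum principle for the nonsmooth quantity $S=\kappa_1\kappa_2-1$.

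It is also more than you need. For the contradiction at time $\delta$, it suffices that $\operatorname{sec}_{M_\delta}\geq0$, so that the weak inequality applies to $\Omega_\delta$. The paper gets exactly this as follows:
\begin{itemize}
\item it runs the flow from $\partial\Omega_c$ and from $\partial\Omega$ on a common interval;
\item it uses preservation of positive sectional curvature for each $c<1$;
\item it passes to the limit by $C^2$ continuous dependence as $c\uparrow1$.
\end{itemize}
Alternatively, for $S<0$ one has $\mathcal R\geq2S\,\dot F^{ij}(h^2)_{ij}$. Rerunning the barrier of Theorem~\ref{preservation} with $b(t)=-\varepsilon e^{(C+1)t}$ then shows directly that $S\geq0$ is preserved.

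Finally, the step ``not a ball, hence $F$ nonconstant'' is not automatic. It is Koh's rigidity theorem for closed embedded hypersurfaces with $E_k/E_\ell$ constant, and you must invoke it explicitly.
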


The comparisons with $\ell=0$ under positive sectional
curvature were proved by Andrews, Chen and Wei \cite{ACW}. We include $\ell=0$ in Theorem~\ref{AF} to state the inequalities uniformly
under nonnegative sectional curvature.
For $k\leq\lfloor(n+1)/2\rfloor$, Theorem~\ref{AF}
gives all comparisons with $0\leq\ell<k$.
In particular, when $n=5$, it includes the comparison
between $W_3$ and $W_2$. Together with the inequalities
for upper index $n-1$ and $n$, proved by Hu and Li
\cite{HL} and Brendle, Guan and Li \cite{BGL},
respectively, we obtain the following result.

\begin{corollary}\label{AF-H6}
Let $2\leq n\leq5$, and let
$\Omega\subset\mathbb H^{n+1}$ be a smooth bounded convex domain
whose boundary has nonnegative sectional curvature. Then
\begin{equation}\label{AF-low-dimensional-all}
 W_k(\Omega)\geq
 f_k\circ f_\ell^{-1}\bigl(W_\ell(\Omega)\bigr),
 \qquad 0\leq \ell<k\leq n.
\end{equation}
Equality holds if and only if $\Omega$ is a geodesic ball.
\end{corollary}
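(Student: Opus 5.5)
We prove Corollary~\ref{AF-H6} by checking, for each dimension $2\leq n\leq5$ and each pair $0\leq\ell<k\leq n$, that the pair falls into one of three families: those covered by Theorem~\ref{AF}, those with upper index $k=n-1$ (Hu and Li \cite{HL}), and those with upper index $k=n$ (Brendle, Guan and Li \cite{BGL}). Both cited results assume strict convexity. Our domains are only convex with nonnegative sectional curvature, so we also need an approximation argument.

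\emph{Case analysis.} If $k\in\{n-1,n\}$, the pair is covered by \cite{HL} or \cite{BGL}. It remains to treat $k\leq n-2$, which forces $n\geq3$ and $\ell<k\leq n-2$.
\begin{itemize}
\item For $n=3$: $k=1$, $\ell=0$, and $k+\ell=1\leq3$.
\item For $n=4$: $k\leq2$, $\ell\leq1$, so $k+\ell\leq3\leq4$.
\item For $n=5$: $k\leq3$, $\ell\leq2$, so $k+\ell\leq5$. The extreme pair $(k,\ell)=(3,2)$ is exactly the one singled out after Theorem~\ref{AF}.
\end{itemize}
In every case $k+\ell\leq n$, so Theorem~\ref{AF} applies, including its equality characterization. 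For $n=2$, every pair has $k\in\{1,2\}=\{n-1,n\}$.

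\emph{Extending the upper-index cases.} We apply \cite{HL,BGL} to convex domains with nonnegative sectional curvature as follows.
\begin{itemize}
\item A convex boundary with nonnegative sectional curvature satisfies $\kappa_i\kappa_j\geq1$ for $i\neq j$. Hence at most one principal curvature is less than $1$, and that one is nonnegative.
\item Approximate $\Omega$ in $C^2$ by strictly convex domains $\Omega_\varepsilon$, for instance by a short-time flow or by perturbing the support function in the Klein model.
\item Since the quermassintegrals are continuous under $C^2$ (indeed Hausdorff) convergence of convex bodies, the inequality passes to the limit.
\end{itemize}
For $k=n$ one may instead cite \cite{BGL} directly for weakly convex domains, as is done for $(n,0)$ in \cite{WYZ2024}.

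\emph{Main obstacle: equality.} The limiting argument gives the inequality but not rigidity, because strictness is lost in the limit. The plan is as follows.
\begin{itemize}
\item If $\Omega$ is strictly convex, apply the equality statements of \cite{HL,BGL} directly.
\item If $\Omega$ is not strictly convex, show that equality cannot hold.
\item To do so, run the flow from $\Omega$: in the hyperbolic setting, weakly convex hypersurfaces become strictly convex instantly under a $W_\ell$-preserving flow.
\item Along such a flow the preserved $W_\ell$ stays fixed while $W_k$ is nonincreasing. Equality at time $0$, together with the inequality at positive times, then forces $W_k$ to be constant on a short time interval.
\item Constancy forces the flow to be static, which means $\Omega$ is a geodesic ball.
\end{itemize}
Alternatively, since the nonnegative sectional curvature hypothesis is stronger than what \cite{HL,BGL} need, we will rely on those papers' treatment of the weakly convex case, where it is available, for rigidity.
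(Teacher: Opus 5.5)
\textbf{The gap: you missed that the boundary is automatically strictly convex.} Your index count is correct and matches the paper's. For $3\leq n\leq5$ and $\ell<k\leq n-2$ one has $k+\ell\leq 2n-5\leq n$, so Theorem~\ref{AF} applies, while upper indices $n-1$ and $n$ (and all of $n=2$) are covered by \cite{HL} and \cite{BGL}.

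The problem is in your second and third parts. You write that at most one principal curvature is less than $1$ ``and that one is nonnegative''; in fact it is positive. Convexity gives $\kappa_i\geq0$, and by \eqref{Gauss} nonnegative sectional curvature gives $\kappa_i\kappa_j\geq1$ for $i\neq j$. Since $n\geq2$, every index $i$ has a partner $j\neq i$, so $\kappa_i\neq0$, i.e.\ $\kappa_i>0$ for all $i$. So the hypotheses already force strict convexity of $\partial\Omega$. The results of \cite{HL} and \cite{BGL}, including their equality statements, then apply verbatim. Beyond the index count, this one-line observation is all the paper uses. No approximation and no separate rigidity argument are needed, and the ``not strictly convex'' case you set out to rule out never occurs.

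\textbf{The detour would not stand on its own.} If that case were real, your argument for it would leave the rigidity part unproved. The claim that a weakly convex hypersurface becomes strictly convex instantly under a $W_\ell$-preserving flow is asserted without justification. The short-time theory available here requires strictly convex initial data, and a speed such as $(E_n/E_\ell)^{1/(n-\ell)}$ degenerates where a principal curvature vanishes. Without that flow, the chain ``$W_k$ nonincreasing plus the inequality at positive times'' has nothing to run on. Your remark about citing \cite{BGL} for weakly convex domains ``as in \cite{WYZ2024}'' also misattributes the result: the weakly convex $(n,0)$ comparison there comes from a Gauss curvature flow, not from \cite{BGL}. Replace both detours with the strict-convexity observation above.
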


Fix integers $1\leq \ell<k\leq n-1$ with $k+\ell\leq n$.
Starting from a smooth embedding $X_0$ of a closed manifold $M^n$
into $\mathbb H^{n+1}$, we consider
\begin{equation}\label{flow}
\begin{cases}
 \displaystyle
 \frac{\partial X}{\partial t}(x,t)
 =\bigl(\phi(t)-F(\mathcal W)\bigr)\nu(x,t),\\
 X(\cdot,0)=X_0,
\end{cases}
\end{equation}
where
\begin{equation}\label{normalization}
 F=\left(\frac{E_k}{E_\ell}\right)^{1/(k-\ell)},
 \qquad
 \phi(t)=
 \frac{\displaystyle\int_{M_t}E_\ell F\,d\mu_t}
      {\displaystyle\int_{M_t}E_\ell\,d\mu_t}.
\end{equation}
Here $M_t=X(M,t)$, $\mathcal W=(h^i{}_j)$ is its Weingarten
matrix, $\nu$ is the outward unit normal, and $d\mu_t$ is the
induced area element.

This flow belongs to the quermassintegral-preserving quotient family
considered in \cite{WX} for horospherically convex hypersurfaces.
It has the additive form studied in \cite{ACW}, with the global
term chosen to preserve $W_\ell$. 
The normalization preserves $W_\ell$ and makes $W_k$ nonincreasing. By \eqref{W-monotonicity}, $W_k$ is constant precisely when $F$
is spatially constant. We prove the following long-time existence and convergence theorem.

\begin{theorem}\label{main-flow}
Let $n\geq3$ and $1\leq \ell<k\leq n-1$ satisfy $k+\ell\leq n$.
Suppose that $X_0$ is a smooth embedding of a closed manifold $M^n$
into $\mathbb H^{n+1}$, and that $M_0=X_0(M)$ bounds a convex
domain $\Omega_0$ and has positive sectional curvature. Then
\eqref{flow}--\eqref{normalization} has a unique smooth solution
for all $t\in[0,\infty)$. Each $M_t$ has positive sectional
curvature, and $M_t$ converges smoothly and exponentially to a
geodesic sphere $\partial B_{r_\infty}(p_\infty)$, where $W_\ell(B_{r_\infty})=W_\ell(\Omega_0)$.
\end{theorem}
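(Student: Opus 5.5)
We follow the scheme of Andrews, Chen and Wei \cite{ACW}. Short-time existence is standard: $F=(E_k/E_\ell)^{1/(k-\ell)}$ is smooth, monotone and concave on the positive cone, so \eqref{flow} is strictly parabolic while $M_t$ is strictly convex, and the nonlocal term $\phi(t)$ is handled as in \cite{McCoy,ACW}. The argument then has three parts: preservation of positive sectional curvature, a priori estimates giving long-time existence, and convergence.

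\emph{Step 1: preservation of $\kappa_i\kappa_j>1$.} This is the main obstacle. We use the tensor maximum principle, in the form of Andrews, on the tensor
\[
S_{ij}=h_{ij}-\lambda\, g_{ij}\quad\text{or, more naturally,}\quad T=\mathcal W\text{-pairs }\ h_i{}^i h_j{}^j-1 .
\]
Concretely, we study the smallest value of $\kappa_1\kappa_2$ over pairs, or equivalently the smallest eigenvalue of $\mathcal W\wedge\mathcal W-\mathrm{Id}$ on $2$-vectors. The evolution equation of $h_i{}^j$ under \eqref{flow} in $\mathbb H^{n+1}$ is
\[
\partial_t h_i{}^j=\dot F^{kl}\nabla_k\nabla_l h_i{}^j+\ddot F^{kl,pq}\nabla_i h_{kl}\nabla^j h_{pq}+(\dot F^{kl}h_{kp}h^p{}_l)h_i{}^j+(\phi-2F)\,h_i{}^ph_p{}^j+\cdots,
\]
together with the ambient terms $\phi-F$ and $-\dot F^{kl}g_{kl}h_i{}^j+F\delta_i^j$. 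At a first point and direction where $\kappa_1\kappa_2=1$, we must check two things.
\begin{itemize}
\item[(a)] The reaction terms have the right sign. Using $\kappa_1\kappa_2=1$ and $\kappa_i\geq\kappa_2$ for $i\ge2$, this reduces to an algebraic inequality for the quotient $F$, namely
\[
\sum_i \dot F^{ii}(\kappa_i^2-1)\ \ge\ F(\kappa_1+\kappa_2)\;-\;\cdots ,
\]
with $\phi$ possibly entering. Since $\phi>0$ and its coefficient is $\kappa_1\kappa_2(\kappa_1+\kappa_2)-(\kappa_1+\kappa_2)=0$, the term containing $\phi$ drops out. This cancellation is the reason the additive form works.
\item[(b)] The gradient terms, namely the concavity term $\ddot F$ plus the terms from the second derivatives of the minimum eigenvalue, are nonnegative. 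This is a Brendle--Choi--Daskalopoulos-type estimate in which inverse-concavity style inequalities for $(E_k/E_\ell)^{1/(k-\ell)}$ are needed.
\end{itemize}
We expect the restriction $k+\ell\le n$ to enter exactly in (a) and (b), through an inequality such as Newton--Maclaurin applied to $\sigma_{k-1}(\kappa|12)$ and $\sigma_{\ell-1}(\kappa|12)$ when $\kappa_1\to0$, $\kappa_2\to\infty$. We would first test the extreme configuration $\kappa_1=\varepsilon$, $\kappa_2=\dots=\kappa_n=1/\varepsilon$ to find the sign condition, then prove the general inequality by expanding in $\kappa_1,\kappa_2$.

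\emph{Step 2: a priori estimates.} Positive sectional curvature makes $M_t$ strictly convex, since all pairwise products are positive and hence at most one $\kappa_i$ could be nonpositive. Preservation of $W_\ell$ and monotonicity of $W_k$ bound the inradius and outradius, by a standard argument for convex bodies in $\mathbb H^{n+1}$ with controlled quermassintegrals. Alexandrov reflection or a support-function argument then gives a ball $B_\rho(p)$ staying inside $\Omega_t$ on time intervals of fixed length. Using the support function $u=\sinh r\langle\partial_r,\nu\rangle\ge c>0$ as in Tso's technique, we bound $F$ above, and hence $\phi$. Next, $\kappa_i\kappa_j\ge1$ with $F$ bounded gives two-sided curvature bounds: a lower bound on $F$ follows from pinching together with the bound on $\kappa_{\max}$. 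Krylov--Safonov and Schauder estimates then give all higher derivatives, and hence long-time existence.

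\emph{Step 3: convergence.} Integrating the monotonicity formula \eqref{W-monotonicity} in time gives $\int_0^\infty\!\int_{M_t}E_\ell(F-\phi)^2\,d\mu_t\,dt<\infty$, together with uniform smooth bounds. Hence subsequential limits have $F$ constant, and are therefore geodesic spheres by the Alexandrov-type theorem for curvature quotients. The radius is fixed by $W_\ell(B_{r_\infty})=W_\ell(\Omega_0)$. Exponential convergence and uniqueness of the centre $p_\infty$ follow from linearizing at the sphere, where the linearized operator has a spectral gap modulo the translation modes, which are controlled as in \cite{ACW}.
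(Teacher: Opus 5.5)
Your Step~1 names the crucial estimate but does not supply it, and your heuristics for it point the wrong way.

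\emph{The reaction terms impose no index restriction.} At a point where $S=\kappa_1\kappa_2-1=0$ they reduce to $2\sum_i\dot F^i(\kappa_i-\kappa_1)(\kappa_i-\kappa_2)\ge0$ for every $\ell<k$. You are right that $\phi$ enters only through $-\phi S(\kappa_1+\kappa_2)$. However, the hyperbolic reaction terms are $\dot F^{kl}\bigl((h^2)_{kl}+g_{kl}\bigr)h^i{}_j-\phi h^i{}_kh^k{}_j+(\phi-2F)\delta^i_j$, not what you wrote.

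\emph{Concavity and inverse-concavity cannot control the gradient terms (b).} $F=(E_k/E_\ell)^{1/(k-\ell)}$ is concave and inverse-concave for all $\ell<k$. Yet Theorem~\ref{prop-section-counterexample} shows that positive sectional curvature is lost whenever $k+\ell>n$.

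\emph{The missing idea is the first-order condition at the contact point.} Take a lower support $\varphi$ of $S$ with $\nabla\varphi=0$, such as the barrier $\varepsilon_0e^{-(C+1)t}$. Then Codazzi gives $\kappa_2\nabla_mh_{11}+\kappa_1\nabla_mh_{22}=0$ for every $m$. Next, add the frame-rotation terms $\frac{\kappa_1\kappa_2}{\kappa_i(\kappa_p-\kappa_i)}(\nabla_mh_{ip})^2$, pass to $\psi=-1/F$, and group all Codazzi permutations of each component of $\nabla h$. The delicate pieces are then the diagonal blocks in directions $1$ and $2$. In the inverse variables $\lambda_i=\kappa_i^{-1}$ they become the form $\mathcal J[q](\lambda,\xi)$ of Proposition~\ref{key-lemma}. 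Here $q=(\sigma_{n-\ell}/\sigma_{n-k})^{1/(k-\ell)}$ is the dual quotient and $\xi$ satisfies the paired constraint $\lambda_2\xi_1+\lambda_1\xi_2=0$.

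That proposition is a nontrivial symmetric-polynomial inequality and is the technical heart of the paper. It holds exactly when $(n-\ell)+(n-k)\ge n$, i.e.\ $k+\ell\le n$. When $k+\ell>n$ it already fails near umbilic points, $\lambda=(a,1,\dots,1)$ with $a\downarrow1$. So the degenerate configuration $\kappa_1\to0$, $\kappa_2=\dots=\kappa_n=1/\kappa_1$ that you propose to test is not where the condition is detected. Repeated eigenvalues also need a separate perturbation argument. Without all this, Step~1 is a plan rather than a proof.

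Step~2 contains a second genuine gap: the inradius bound. For $\ell\ge1$ no standard argument gives it from $W_\ell(\Omega_t)=W_\ell(\Omega_0)$ and $W_k(\Omega_t)\le W_k(\Omega_0)$. Convex bodies of bounded diameter can collapse onto an $(n+1-\ell)$-dimensional disk while $W_\ell$ stays bounded below. Meanwhile $W_k$ is bounded above anyway, by inclusion in a fixed ball. Volume preservation, as in \cite{ACW}, excludes such collapse, but $W_\ell$-preservation does not.

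The paper's Lemma~\ref{noncollapsing} uses the preserved curvature condition instead. In the Klein model, take the three-dimensional section through the shortest and the two longest axes of the John ellipsoid. The condition $\kappa_1\kappa_2\ge1$ bounds the Gauss curvature of this section from below. A maximum-principle comparison for the height function over the disk of radius $b$ then gives $b^2\le C\varepsilon$. Here $b$ is the second-largest semiaxis and $\varepsilon$ the shortest. So the domain can only collapse onto a segment, which forces $W_\ell\to0$ since $\ell\le n-1$.

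Your curvature bounds are also incomplete. $F\le C_F$ together with $\kappa_i\kappa_j\ge1$ gives $\kappa_1\ge\beta_{n,k,\ell}/C_F$, using $k\le n-1$. It does not bound $\kappa_{\max}$: for $\kappa=(1,\dots,1,K)$ one has $F\to(k/\ell)^{1/(k-\ell)}$ as $K\to\infty$. You need a separate maximum principle for $\kappa_n$, driven by the term $-\phi\mathcal W^2$ in the Weingarten evolution. This requires $\phi\ge1$, which follows from $F\ge1$ under positive sectional curvature.

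Your Step~3 agrees with the paper's convergence argument.
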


Since $W_\ell$ is preserved and $W_k$ is nonincreasing,
Theorem~\ref{main-flow} proves \eqref{AF-all} for
$1\leq\ell<k$ with $k+\ell\leq n$ when the boundary has positive sectional
curvature. The case $\ell=0$ follows from \cite{ACW}.
Klein contraction and a short-time flow argument extend
the result, including the equality case, to nonnegative
sectional curvature, as shown in Section~\ref{subsec-convergence}.

The flow \eqref{flow}--\eqref{normalization} is also defined for
$1\leq \ell<k\leq n$ outside the preceding range, including $k=n$.
The next theorem shows that $k+\ell\leq n$ is necessary for
preservation of positive sectional curvature.

\begin{theorem}\label{prop-section-counterexample}
Let $n\geq3$ and $1\leq \ell<k\leq n$ satisfy $k+\ell>n$. There are smooth closed strictly convex initial hypersurfaces with positive sectional curvature whose solutions of~\eqref{flow}--\eqref{normalization} develop negative sectional curvature while remaining smooth and strictly convex. The loss can occur in arbitrarily short time.
\end{theorem}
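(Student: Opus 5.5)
The plan is to construct initial hypersurfaces on which the sectional curvature condition $\kappa_i\kappa_j>1$ is barely satisfied at some point, and then show via the evolution of the principal curvatures that the relevant product $\kappa_1\kappa_2-1$ has strictly negative time derivative there at $t=0$.

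\emph{Reduction to a pointwise computation.} Along \eqref{flow} the Weingarten tensor evolves by
\begin{equation*}
\partial_t h_i{}^j=\nabla^j\nabla_iF+F\,(h^2)_i{}^j-\phi\,\bigl((h^2)_i{}^j-\delta_i{}^j\bigr)-F\delta_i{}^j,
\end{equation*}
so it suffices to find smooth closed strictly convex $M_0$ with positive sectional curvature and a point $p$ where $\kappa_1\kappa_2-1=0$, but where $\partial_t(\kappa_1\kappa_2)<0$. Perturbing $M_0$ slightly inward in the class of positive sectional curvature, so that $\kappa_1\kappa_2-1=\varepsilon>0$ at $p$, the derivative remains negative by continuity. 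Hence $\kappa_1\kappa_2<1$ at $p$ after time $O(\varepsilon)$, which can be made arbitrarily small. Short-time existence and smoothness give that strict convexity persists on this short interval.

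\emph{Choice of the test point.} I would take $M_0$ locally near $p$ to be a hypersurface whose second fundamental form at $p$ is diagonal with $\kappa_1=\kappa_2=1$, or more generally $\kappa_1=a$ and $\kappa_2=1/a$, with $\kappa_3=\dots=\kappa_n=\lambda$ large. I also arrange that $p$ is a critical point of the relevant curvature quantities, so that the first derivatives of $h$ vanish at $p$ and the gradient terms in $\nabla^2F$ drop out. The second derivatives $\nabla_1\nabla_1h_{33}$ and similar ones are then free parameters, subject to Codazzi symmetry, that I can prescribe.

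The reaction terms at $p$ for $\kappa_1\kappa_2$ are $(F-\phi)(\kappa_1^2\kappa_2+\kappa_1\kappa_2^2)+2(\phi-F)\kappa_1\kappa_2\cdot(\text{stuff})$. Since $\phi$ is a global average, localizing the construction (a sphere-like body with a small region of large $\kappa_3,\ldots,\kappa_n$) lets me control $\phi-F$. The key term is the diffusion term $\kappa_2\nabla_1\nabla_1F+\kappa_1\nabla_2\nabla_2F$. This contains $\dot F^{pp}\nabla_1\nabla_1h_{pp}$, and after commuting derivatives via Simons' identity it produces curvature terms of the form
\begin{equation*}
\dot F^{pp}(\kappa_p-\kappa_1)(\kappa_p\kappa_1-1).
\end{equation*}
The index condition enters exactly here. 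As $\lambda\to\infty$ with $\kappa_1,\kappa_2$ bounded, one has $F\approx\bigl(\binom nk^{-1}\sigma_{k}/\binom n\ell^{-1}\sigma_\ell\bigr)^{1/(k-\ell)}$. When $k+\ell>n$, i.e.\ $n-2<k+\ell-2$ or similar, the homogeneity counts show that $\dot F^{11}$ and $\dot F^{22}$ scale with a positive power of $\lambda$ relative to $\dot F^{pp}$, $p\geq3$. Roughly, $\sigma_k$ must use $\kappa_1$ once $k>n-1$ fails in the $\ell$-part. The net reaction on $\kappa_1\kappa_2$ then behaves like $-c\lambda^{\gamma}$ with $\gamma>0$, in contrast to the case $k+\ell\le n$ treated by the maximum principle earlier in the paper, where this term had the good sign.

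\emph{Main obstacle.} The expected obstacle is making the sign computation rigorous. Concretely, I would evaluate $\dot F^{ii}$ and $\ddot F$ at the model point $(a,1/a,\lambda,\dots,\lambda)$ and isolate the leading order in $\lambda$ of the zeroth-order terms in the evolution of $\kappa_1\kappa_2$. I would choose the free second derivatives (e.g.\ $\nabla_1\nabla_1h_{22}$) to be zero or to have the helpful sign, and then check that $k+\ell>n$ makes the leading coefficient negative. Finally I would realize the local data globally by gluing into a smooth closed convex hypersurface, for example a capsule-like rotation hypersurface, keeping $\kappa_i\kappa_j>1$ elsewhere.
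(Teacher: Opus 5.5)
\textbf{There is a genuine gap: the mechanism you propose at $p$ cannot give a negative derivative, for any $k,\ell$.} Your overall reduction agrees with the paper: find a point where $S=\kappa_1\kappa_2-1$ attains the minimum value $0$ and $\partial_tS<0$, then perturb. The failure is in the data you put at $p$. At such a point the zero-order term $(F-\phi)(\kappa_1+\kappa_2)(\kappa_1\kappa_2-1)$ vanishes. So $\phi$ and any localization play no role, and $\partial_tS(p)=\kappa_2\nabla_1\nabla_1F+\kappa_1\nabla_2\nabla_2F$.

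If you impose $\nabla h(p)=0$, then $\nabla_i\nabla_iF=\sum_q\dot F^q\nabla_i\nabla_ih_{qq}$. The Simons identity gives $\nabla_1\nabla_1h_{qq}=\nabla_q\nabla_qh_{11}+(\kappa_1\kappa_q-1)(\kappa_q-\kappa_1)$. Substituting and using $\kappa_1\kappa_2=1$,
\begin{equation*}
\partial_tS(p)=\sum_q\dot F^q\nabla_q\nabla_qS+2\sum_q\dot F^q(\kappa_q-\kappa_1)(\kappa_q-\kappa_2)\geq0.
\end{equation*}
This holds because $\nabla^2S(p)\geq0$ at a minimum and every factor in the second sum is nonnegative. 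The curvature terms you expect to behave like $-c\lambda^{\gamma}$ are exactly the reaction term $\mathcal R$ of Lemma~\ref{lem:rotating} at $S=0$. That term is nonnegative whatever $k,\ell$ are, since it uses only $\dot F^q>0$. Taking $\lambda$ large does not help. Prescribing second derivatives ``with the helpful sign'' would contradict $\nabla^2S(p)\geq0$. In the paper's preservation proof, $k+\ell\leq n$ is never used for the reaction terms. It enters only through the gradient term $Q[F]$, which is quadratic in $\nabla h$, and your normalization $\nabla h(p)=0$ kills it.

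\textbf{The missing idea is to put nonzero third-order data at a nearly umbilic point.} You need $\nabla h(p)\neq0$, subject to $\kappa_2\nabla_mh_{11}+\kappa_1\nabla_mh_{22}=0$, which is forced by $\nabla S(p)=0$. Then the concavity term $\ddot F(\nabla h,\nabla h)$ can win. The paper takes a Klein graph with $\mathcal A(o)=\operatorname{diag}(a^{-1},a,a^2,\ldots,a^2)$, $\nabla_1h_{11}=1$ and $\nabla_1h_{22}=\nabla_2h_{12}=-a^2$. Since $\ddot F$ at $I$ carries the factor $(k+\ell-1)/(n^2(n-1))$, this gives $\lim_{a\downarrow1}\partial_tS_a(o,0)=-4(k+\ell-n)/(n(n-1))<0$. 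This is the same computation as the necessity part of Proposition~\ref{key-lemma}.

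The paper then completes the construction in steps you have not yet addressed:
\begin{itemize}
\item A quartic correction makes $o$ a strict zero minimum of $S_a$.
\item The graph is glued to a spherical cap by a smoothed maximum that keeps $\operatorname{sec}\geq0$.
\item Klein contraction (Lemma~\ref{Klein-contraction}) upgrades this to $\operatorname{sec}>0$.
\item $C^2$ continuous dependence transfers $S_1(o,t_*)<0$ to the contracted data.
\end{itemize}
Your globalization steps are also only sketched. The ``capsule-like'' gluing is unspecified. Your claim that $\partial_tS$ stays negative by continuity needs control of fourth-order quantities, not just $C^2$ dependence. The decisive issue, however, is the choice of jets at $p$.
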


Together with Theorem~\ref{main-flow}, this gives the exact range
$k+\ell\leq n$ for preservation of positive sectional curvature by
these quotient flows. For adjacent quotients $F=E_{m+1}/E_m$,
the preservation range is $2m<n$, and the counterexamples cover
$2m\geq n$.

\subsection{Discussion of the proof}

We first prove preservation of positive sectional curvature
for the quotient flow. Following \cite{ACW}, we consider
smooth lower supports of the minimum sectional curvature
$S=\kappa_1\kappa_2-1$. The usual concavity and
inverse-concavity of $F$ do not by themselves control all
the derivative terms in the evolution of $S$.
At a contact point where a lower support has zero
spatial gradient, the Codazzi equation gives
\begin{equation*}
 \kappa_2\nabla_m h_{11}
 +\kappa_1\nabla_m h_{22}=0.
\end{equation*}
This relation allows us to apply the paired concavity
inequality in Proposition~\ref{key-lemma} to the dual curvature
quotient. The inequality holds precisely when
$k+\ell\leq n$ and gives preservation of positive
sectional curvature in this range. The argument applies
to any positive smooth global term $\phi(t)$.

The main additional difficulty for the $W_\ell$-preserving
flow is to obtain a uniform lower bound for the inradius.
Alexandrov reflection gives a uniform diameter bound,
as in \cite{ACW}. Unlike the volume-preserving case,
a positive lower bound for $W_\ell$ does not by itself
prevent collapse when $\ell\geq1$.
In Lemma~\ref{noncollapsing}, we use the sectional
curvature condition and a three-dimensional section
of a John ellipsoid to show that, if its shortest
semiaxis tends to zero, then so does its second
largest semiaxis. Thus the domains can only collapse
to sets of dimension at most one, which would force
$W_\ell$ to tend to zero. This gives the required
inradius bound.

With the inner and outer radius bounds, a Tso-type
estimate \cite{Tso} gives an upper bound for $F$.
The sectional curvature condition then gives a
positive lower bound for the principal curvatures.
The term $-\phi\mathcal W^2$ in the Weingarten
evolution gives the upper curvature bound.
These estimates yield long-time existence and
uniform higher-order bounds.

The decrease of $W_k$ gives a sequence of times
along which the dissipation tends to zero.
A smooth subsequential limit has constant
$E_k/E_\ell$ and is therefore a geodesic sphere
by Koh's rigidity theorem \cite{Koh}.
The linearization and normal stability theorem
\cite{PSZ} then give smooth exponential convergence
of the entire flow. For the Alexandrov--Fenchel
inequalities, Klein contraction allows us to pass
from positive to nonnegative sectional curvature.
The equality cases follow by running the
constrained flow for a short time and using
the monotonicity of $W_k$.

Finally, we show that the index restriction is
necessary for curvature preservation by this family
of flows. When $k+\ell>n$, we construct a local
graph whose minimum sectional curvature is
nonnegative, vanishes at one point, and has
negative initial time derivative there.
The global term does not contribute to this
initial derivative at the point where $S=0$.
A quartic correction makes the sectional curvature
positive away from that point. We then join the
graph to a spherical cap and use Klein contraction
to obtain smooth closed initial hypersurfaces
with positive sectional curvature.
By short-time continuous dependence, their
flows lose this property in arbitrarily short time.

\subsection{Organization of the paper}
Section~\ref{sec-prelim} collects the curvature-function properties,
hyperbolic geometry and Klein contraction, quermassintegral
variations, and evolution equations used below.
Section~\ref{sec-preservation} establishes preservation of positive
sectional curvature. In Section~\ref{sec-proof}, we prove the shape
and curvature estimates, long-time existence, convergence, and the
Alexandrov--Fenchel inequalities. Section~\ref{sec-counterexamples}
proves sharpness of the index range by geometric counterexamples.
The proof of the paired concavity inequality is given in
Section~\ref{sec-algebra}.

\subsection{On the use of AI}  
The main mathematical ideas and geometric strategy of this
paper were developed by the authors. GPT-5.6 Sol and GPT-6
Astra were used to assist with calculations and proof
development, particularly the algebraic inequality in
Section~6, as well as with improving the exposition and
correcting grammatical errors. The authors take full
responsibility for the mathematical content and conclusions
of this paper.

\section{Preliminaries}\label{sec-prelim}

In this section, we recall the properties of elementary symmetric
functions and the geometry of hypersurfaces in hyperbolic space.
We also record the Klein contraction, the variational formulas for
quermassintegrals, and the evolution equations used below.
Repeated tensor indices are summed from $1$ to $n$ unless otherwise stated.

\subsection{Elementary symmetric functions}\label{subsec-symmetric}

We use the normalized elementary symmetric functions
\begin{equation*}
 E_m(\kappa)=\binom nm^{-1}\sigma_m(\kappa),\qquad 0\leq m\leq n.
\end{equation*}
We set $E_0=1$ and $E_m=0$ for $m<0$ or $m>n$, and work
on the positive cone
\begin{equation*}
 \Gamma_+=\{\kappa\in\mathbb R^n:\kappa_i>0\text{ for all }i\}.
\end{equation*}
A smooth symmetric function $\Phi$ on $\Gamma_+$ is also regarded
as a function of positive definite symmetric matrices by evaluation
at their eigenvalues. For symmetric matrices $A$ and $\xi$, write
\begin{equation*}
 \left.\frac{d}{ds}\Phi(A+s\xi)\right|_{s=0}
 =\dot\Phi^{ij}\xi_{ij},\qquad
 \left.\frac{d^2}{ds^2}\Phi(A+s\xi)\right|_{s=0}
 =\ddot\Phi^{ij,k\ell}\xi_{ij}\xi_{k\ell}.
\end{equation*}
For derivatives in the eigenvalue variables, we write
$\dot\Phi^i=\partial\Phi/\partial\kappa_i$ and
$\ddot\Phi^{ij}=\partial^2\Phi/\partial\kappa_i\partial\kappa_j$.
At a diagonal matrix, the standard matrix derivative formula
\cite{Andrews2007}*{Theorem~5.1} gives
\begin{equation}\label{matrix-Hessian}
\begin{aligned}
 \dot\Phi^{ij}&=\dot\Phi^i\delta_{ij},\\
 \ddot\Phi^{ij,k\ell}\xi_{ij}\xi_{k\ell}
 &=\sum_{i,j}\ddot\Phi^{ij}\xi_{ii}\xi_{jj}
   +2\sum_{i<j}
       \frac{\dot\Phi^i-\dot\Phi^j}{\kappa_i-\kappa_j}\xi_{ij}^2.
\end{aligned}
\end{equation}
The divided differences extend continuously to repeated eigenvalues,
where their value is $\ddot\Phi^{ii}-\ddot\Phi^{ij}$.

On $\Gamma_+$, the Newton--MacLaurin inequalities
\cite{HLW}*{Lemma~2.2} take the form
\begin{equation}\label{Newton-MacLaurin}
 E_{m+1}E_{j-1}\leq E_mE_j,
 \qquad 1\leq j\leq m\leq n-1,
\end{equation}
with equality if and only if all principal curvatures are equal.

For $0\leq \ell<k\leq n$, set
\begin{equation*}
 F=\left(\frac{E_k}{E_\ell}\right)^{1/(k-\ell)}.
\end{equation*}
The adjacent quotients are strictly increasing and concave on
$\Gamma_+$ by \cite{Andrews2007}*{Theorem~2.6}. Since the geometric
mean is increasing and concave, the representation
\begin{equation}\label{quotient-product}
 F=\left(\prod_{j=\ell}^{k-1}\frac{E_{j+1}}{E_j}\right)^{1/(k-\ell)}
\end{equation}
shows that $F$ is smooth, positive, strictly increasing, homogeneous
of degree one, and concave. Moreover,
\begin{equation*}
 E_m(A^{-1})=\frac{E_{n-m}(A)}{E_n(A)},\qquad 0\leq m\leq n,
\end{equation*}
and hence
\begin{equation}\label{dual-quotient}
 F_*(A):=\frac1{F(A^{-1})}
 =\left(\frac{E_{n-\ell}(A)}{E_{n-k}(A)}\right)^{1/(k-\ell)}.
\end{equation}
The same argument with the complementary indices shows that
$F_*$ is concave. Thus $F$ is inverse-concave.

Homogeneity, monotonicity, and concavity give
\begin{equation}\label{quotient-homogeneity}
 \dot F^{ij}A_{ij}=F,\qquad F(\operatorname{Id})=1,\qquad
 \kappa_{\min}\leq F\leq\kappa_{\max},
\end{equation}
and
\begin{equation}\label{trace-bounds}
 \dot F^{ij}\delta_{ij}\geq1,\qquad
 \dot F^{ij}(A^2)_{ij}\geq F^2.
\end{equation}
The first inequality follows from concavity and
Euler's identity:
\begin{equation*}
1=F(\operatorname{Id})
\leq F(A)+\dot F^{ij}(\delta_{ij}-A_{ij})
=\dot F^{ij}\delta_{ij}.
\end{equation*}
Applying the same inequality to $F_*$ at $A^{-1}$
and using
$\dot F_*^i(\kappa^{-1})=\kappa_i^2\dot F^i/F^2$
gives the second inequality. Differentiating $\log F$ and using the identities in
\cite{HLW}*{Lemma~2.1}, we obtain
\begin{equation}\label{quotient-trace-identities}
\begin{aligned}
 \dot F^{ij}\delta_{ij}
 &=\frac{F}{k-\ell}
   \left(k\frac{E_{k-1}}{E_k}-\ell\frac{E_{\ell-1}}{E_\ell}\right),\\
 \dot F^{ij}(A^2)_{ij}
 &=\frac{F}{k-\ell}
   \left((n-\ell)\frac{E_{\ell+1}}{E_\ell}-(n-k)\frac{E_{k+1}}{E_k}\right).
\end{aligned}
\end{equation}

For Section~\ref{sec-preservation}, put
\begin{equation}\label{psi-definition}
 \psi=-F^{-1},\qquad -\psi(A^{-1})=F_*(A).
\end{equation}
Then $\psi$ is strictly increasing and concave. The symmetry and
concavity of $\psi$ and $F_*$ imply
\begin{equation}\label{psi-properties}
\begin{aligned}
 &\dot\psi^i\geq\dot\psi^j>0
 &&\text{if }\kappa_i\leq\kappa_j,\\
 &\kappa_i^2\dot\psi^i\leq\kappa_j^2\dot\psi^j
 &&\text{if }\kappa_i\leq\kappa_j,\\
 &\frac{\dot\psi^i-\dot\psi^j}{\kappa_i-\kappa_j}
       +\frac{\dot\psi^i}{\kappa_j}
       +\frac{\dot\psi^j}{\kappa_i}\geq0
 &&\text{if }i\ne j.
\end{aligned}
\end{equation}
The last inequality follows from the second and
$\dot F_*^i(\kappa^{-1})=\kappa_i^2\dot\psi^i$,
using
\begin{equation*}
 \frac{\dot\psi^i-\dot\psi^j}{\kappa_i-\kappa_j}
 +\frac{\dot\psi^i}{\kappa_j}+\frac{\dot\psi^j}{\kappa_i}
 =\frac{\kappa_i^2\dot\psi^i-\kappa_j^2\dot\psi^j}
        {\kappa_i\kappa_j(\kappa_i-\kappa_j)}.
\end{equation*}
At repeated eigenvalues, we use continuous extension.

\subsection{Hypersurfaces in hyperbolic space}\label{subsec-geometry}

We use the standard conventions and identities for hypersurfaces in
space forms as in \cites{Gerhardt,ACGL} and
\cite{ACW}*{Section~2}. Fix $p\in\mathbb H^{n+1}$ and let $r$ be the distance from $p$.
In geodesic polar coordinates,
\begin{equation*}
 \overline g=dr^2+\sinh^2r\,\sigma,
\end{equation*}
where $\sigma$ is the standard round metric on $\mathbb S^n$.
We denote by $\overline\nabla$ and $D$ the Levi-Civita connections
of $\overline g$ and $\sigma$, respectively. The radial vector field
satisfies
\begin{equation*}
 \overline\nabla_U(\sinh r\,\partial_r)=\cosh r\,U.
\end{equation*}

Let $X$ be a smooth embedding of a closed hypersurface $M^n$
into $\mathbb H^{n+1}$ with outward unit normal $\nu$. Its induced metric, Levi-Civita
connection, and area element are denoted by $g$, $\nabla$, and
$d\mu$. In local coordinates, with
$\partial_i=X_*(\partial/\partial x^i)$, our conventions are
\begin{equation*}
 g_{ij}=\overline g(\partial_i,\partial_j),\qquad
 h_{ij}=\overline g(\overline\nabla_{\partial_i}\nu,\partial_j),
 \qquad \mathcal W=(h^i{}_j),\quad h^i{}_j=g^{ik}h_{kj}.
\end{equation*}
The Weingarten map $\mathcal W$ is $g$-self-adjoint, with
principal curvatures $\kappa_1\leq\cdots\leq\kappa_n$. We write
\begin{equation*}
 H=\operatorname{tr}\mathcal W=nE_1,\qquad
 (h^2)_{ij}=h_{ik}g^{k\ell}h_{\ell j}.
\end{equation*}
A geodesic sphere of radius $r$ has
$\mathcal W=\coth r\,\operatorname{Id}$. The Gauss and Codazzi
equations are
\begin{equation}\label{Gauss}
 R_{ijk\ell}
 =-(g_{ik}g_{j\ell}-g_{i\ell}g_{jk})
                         +h_{ik}h_{j\ell}-h_{i\ell}h_{jk},
\end{equation}
\begin{equation}\label{Codazzi}
 \nabla_i h_{jk}=\nabla_j h_{ik}.
\end{equation}
In particular, a principal two-plane has sectional curvature
$\kappa_i\kappa_j-1$, and $\nabla_i h_{jk}$ is totally symmetric.
Curvature-function derivatives on $M$ are evaluated in a local
orthonormal frame, or equivalently at the symmetric matrix
$g^{-1/2}hg^{-1/2}$. Thus \eqref{trace-bounds} applies with
$\delta_{ij}$ and $(A^2)_{ij}$ replaced by $g_{ij}$ and
$(h^2)_{ij}$, respectively.

Suppose that $M$ bounds a convex domain $\Omega$ and
$p\in\operatorname{int}\Omega$. The support function is
\begin{equation*}
 u=\overline g(\sinh r\,\partial_r,\nu).
\end{equation*}
The support-function identities \cite{HLW}*{Lemma~2.4} are
\begin{equation*}
\begin{aligned}
 \nabla_i u&=h^j{}_i\nabla_j\cosh r,\\
 \nabla_i\nabla_j u
 &=\nabla^k\cosh r\,\nabla_kh_{ij}
                         +\cosh r\,h_{ij}-u(h^2)_{ij}.
\end{aligned}
\end{equation*}
If $B_\rho(p)\subset\Omega$, then
\begin{equation}\label{support-lower}
 u\geq\sinh\rho.
\end{equation}

Writing $M$ as a radial graph $r=r(\theta)$ about $p$, with
$\theta\in\mathbb S^n$, set
\begin{equation*}
 r_i=D_i r,\qquad r_{ij}=D_iD_jr,\qquad
 v=\sqrt{1+\frac{|Dr|_\sigma^2}{\sinh^2r}}.
\end{equation*}
The radial graph identities \cite{HLW}*{Section~2.3} are
\begin{equation}\label{radial-graph}
\begin{aligned}
 g_{ij}&=\sinh^2r\,\sigma_{ij}+r_ir_j,\\
 h_{ij}&=\frac1v\bigl(-r_{ij}+\sinh r\cosh r\,\sigma_{ij}
                                      +2\coth r\,r_ir_j\bigr),\\
 u&=\frac{\sinh r}{v}.
\end{aligned}
\end{equation}

We also use the Klein model. In the hyperboloid realization,
the inverse projection is
\begin{equation}\label{Klein-parametrization}
 X=\frac{(1,Y)}{\sqrt{1-|Y|^2}},\qquad
 Y\in B_1(0)\subset\mathbb R^{n+1}.
\end{equation}
Geodesic segments correspond to Euclidean line segments, so
the Klein image of a convex domain is a Euclidean convex body.
Let $g_E,h_E$ be the Euclidean fundamental forms of an oriented
hypersurface in $B_1(0)$, with unit normal $\nu_E$, and put
$s=\langle Y,\nu_E\rangle$. The corresponding hyperbolic normal is
\begin{equation*}
 \nu_H=\frac{(s,\nu_E)}{\sqrt{1-s^2}}.
\end{equation*}
The pulled-back hyperbolic fundamental forms satisfy
\cite{AW}*{Section~5}
\begin{equation}\label{Klein-metric-comparison}
 (g_H)_{ij}
 =\frac1{1-|Y|^2}
   \left((g_E)_{ij}
      +\frac{\langle Y,Y_i\rangle\langle Y,Y_j\rangle}{1-|Y|^2}\right)
\end{equation}
and
\begin{equation}\label{Klein-second-form-comparison}
 h_H=\frac{h_E}{\sqrt{1-|Y|^2}\sqrt{1-s^2}}.
\end{equation}
These formulas give the following strict approximation.

\begin{lemma}\label{Klein-contraction}
Let $M\subset\mathbb H^{n+1}$ be a smooth closed strictly convex
hypersurface, $n\geq2$, and let $\Sigma$ be its Klein image.
For $0<c\leq1$, let $M_c$ have Klein image $c\Sigma$.
At corresponding points, the ordered principal curvatures satisfy
\begin{equation}\label{Klein-contraction-curvatures}
 \kappa_i(M_c)\geq c^{-1}\kappa_i(M),\qquad 1\leq i\leq n.
\end{equation}
In particular, if $\operatorname{sec}_M\geq0$, then
\begin{equation}\label{Klein-contraction-section}
 \operatorname{sec}_{M_c}\geq c^{-2}-1>0\quad(0<c<1),
 \qquad M_c\longrightarrow M\quad\text{smoothly as }c\uparrow1.
\end{equation}
\end{lemma}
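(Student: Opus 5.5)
The proof will compare the hyperbolic and Euclidean shape operators at corresponding points of $\Sigma$ and $c\Sigma$, using \eqref{Klein-metric-comparison} and \eqref{Klein-second-form-comparison}. I will parametrize $c\Sigma$ by $Y_c=cY$, where $Y$ parametrizes $\Sigma$. Euclidean scaling gives
\[
g_E^c=c^2g_E,\qquad h_E^c=c\,h_E,\qquad \nu_E^c=\nu_E,\qquad s_c=c\,s .
\]
Substituting into the formulas gives
\[
h_H^c=\frac{c\,h_E}{\sqrt{1-c^2|Y|^2}\sqrt{1-c^2s^2}}
\geq c\,h_H,
\]
as quadratic forms. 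Indeed $h_E\geq0$ by convexity, and $1-c^2|Y|^2\geq1-|Y|^2$ and $1-c^2s^2\geq 1-s^2$.

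For the metrics, I will show that $g_H^c\leq c^2 g_H$ as quadratic forms. Write $a=|Y|^2$ and $w=\langle Y,\xi\rangle$ for a tangent vector $\xi$. Then
\[
g_H^c(\xi,\xi)=\frac{c^2}{1-c^2a}\Bigl(|\xi|_E^2+\frac{c^2w^2}{1-c^2a}\Bigr).
\]
Both $\frac1{1-c^2a}\leq\frac1{1-a}$ and $\frac{c^2}{(1-c^2a)^2}\leq\frac1{(1-a)^2}$ hold, so $g_H^c\leq c^2g_H$. Combining the two comparisons gives
\[
\frac{h_H^c(\xi,\xi)}{g_H^c(\xi,\xi)}\geq \frac{c\,h_H(\xi,\xi)}{c^2 g_H(\xi,\xi)}=c^{-1}\frac{h_H(\xi,\xi)}{g_H(\xi,\xi)} .
\]
The ordered eigenvalues of $h$ relative to $g$ are given by the Courant--Fischer min–max of this Rayleigh quotient over subspaces of the same tangent space. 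Hence $\kappa_i(M_c)\geq c^{-1}\kappa_i(M)$ for each $i$.

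For the sectional curvature claim, I will use the Gauss equation \eqref{Gauss}. Since $M$ is convex, it has $\kappa_i\geq0$. The assumption $\operatorname{sec}_M\geq0$ means $\kappa_i\kappa_j\geq1$ for $i\neq j$. The minimum sectional curvature of $M_c$ is $\kappa_1(M_c)\kappa_2(M_c)-1$, because the curvature operator is diagonal in a principal frame and the minimum over $2$-planes is attained at the two smallest principal curvatures. This gives
\[
\kappa_1(M_c)\kappa_2(M_c)-1\geq c^{-2}\kappa_1\kappa_2-1\geq c^{-2}-1 ,
\]
which is positive for $c<1$.

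Smooth convergence $M_c\to M$ as $c\uparrow1$ is immediate. The Klein map is a fixed diffeomorphism onto $\mathbb H^{n+1}$, and $c\Sigma\to\Sigma$ smoothly. Strict convexity of $M_c$ is preserved because $h_H^c>0$.

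The main obstacle is the metric comparison, specifically checking that the inequality goes the right direction in both terms. One also has to make sure the eigenvalue comparison is done on the same parameter domain via the parametrization $Y\mapsto cY$, so that "corresponding points" is meaningful. Everything else is direct substitution.
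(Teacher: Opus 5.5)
\textbf{There is a genuine gap: your comparison of the second fundamental forms is reversed, and the argument fails at that step.}

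Since $h_H=h_E/(\sqrt{1-|Y|^2}\sqrt{1-s^2})$, the facts you cite, $1-c^2|Y|^2\geq1-|Y|^2$ and $1-c^2s^2\geq1-s^2$, make the denominators of $h_H^c$ \emph{larger}. Hence
\[
h_H^c=c\sqrt{\frac{(1-|Y|^2)(1-s^2)}{(1-c^2|Y|^2)(1-c^2s^2)}}\;h_H\leq c\,h_H,
\]
with strict inequality wherever $Y\neq0$ and $c<1$.

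Your metric bound $g_H^c\leq c^2g_H$ is correct. Combined with the correct direction for $h$, however, it only gives
\[
\kappa_i(M_c)\geq c^{-1}\sqrt{\frac{(1-|Y|^2)(1-s^2)}{(1-c^2|Y|^2)(1-c^2s^2)}}\;\kappa_i(M).
\]
This is weaker than the claimed inequality and does not yield $\operatorname{sec}_{M_c}\geq c^{-2}-1$. The real difficulty is the balance between $h$ and $g$, not the direction of the metric inequality alone.

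\textbf{What is missing.} The metric shrinks by more than $c^2$, and the surplus must be controlled using $s^2\leq|Y|^2$. Your proposal never uses that inequality. Set $\mu=c^2(1-|Y|^2)/(1-c^2|Y|^2)\leq1$. Your own expression for $g_H^c(\xi,\xi)$ shows two things. The $|\xi|_E^2$ term of $g_H^c$ is exactly $\mu$ times that of $g_H$. The $w^2$ term is $\mu^2\leq\mu$ times that of $g_H$. Hence $g_H^c\leq\mu\,g_H$, and therefore
\[
\frac{h_H^c(\xi,\xi)}{g_H^c(\xi,\xi)}\geq\frac1c\sqrt{\frac{(1-c^2|Y|^2)(1-s^2)}{(1-|Y|^2)(1-c^2s^2)}}\;\frac{h_H(\xi,\xi)}{g_H(\xi,\xi)}.
\]
The square root is at least $1$ because
\[
(1-c^2|Y|^2)(1-s^2)-(1-|Y|^2)(1-c^2s^2)=(1-c^2)(|Y|^2-s^2)\geq0,
\]
by Cauchy--Schwarz: $|s|=|\langle Y,\nu_E\rangle|\leq|Y|$. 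This is exactly how the paper argues.

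With this repair, your remaining steps go through as written: min--max on the common parameter domain via $Y\mapsto cY$, the Gauss equation bound $\kappa_1(M_c)\kappa_2(M_c)-1\geq c^{-2}\kappa_1\kappa_2-1\geq c^{-2}-1$, and smooth convergence.
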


\begin{proof}
Use the notation of \eqref{Klein-metric-comparison} and
\eqref{Klein-second-form-comparison}, with
$s=\langle Y,\nu_E\rangle$, and pull all forms back to $\Sigma$.
Under $Y_c=cY$,
\begin{equation*}
 g_{E,c}=c^2g_E,\qquad h_{E,c}=ch_E,\qquad
 \nu_{E,c}=\nu_E,\qquad s_c=cs.
\end{equation*}
Thus
\begin{equation}\label{Klein-contraction-forms}
\begin{aligned}
 g_c&\leq\frac{c^2(1-|Y|^2)}{1-c^2|Y|^2}\,g_1,\\
 h_c&=c\sqrt{\frac{(1-|Y|^2)(1-s^2)}
                   {(1-c^2|Y|^2)(1-c^2s^2)}}\,h_1.
\end{aligned}
\end{equation}
The metric inequality follows from
$c^2(1-|Y|^2)\leq1-c^2|Y|^2$. Since $h_1>0$, for $V\ne0$,
\begin{equation*}
 \frac{h_c(V,V)}{g_c(V,V)}
 \geq\frac1c\sqrt{\frac{(1-c^2|Y|^2)(1-s^2)}
                         {(1-|Y|^2)(1-c^2s^2)}}
          \frac{h_1(V,V)}{g_1(V,V)}
 \geq\frac1c\frac{h_1(V,V)}{g_1(V,V)},
\end{equation*}
where the last inequality uses
\begin{equation*}
 (1-c^2|Y|^2)(1-s^2)-(1-|Y|^2)(1-c^2s^2)
 =(1-c^2)(|Y|^2-s^2)\geq0.
\end{equation*}
The min--max principle proves \eqref{Klein-contraction-curvatures}.
Equation~\eqref{Gauss} and smoothness of the inverse Klein map give
\eqref{Klein-contraction-section}.
\end{proof}

\subsection{Hyperbolic quermassintegrals}\label{subsec-quermassintegrals}

For a smooth bounded convex domain $\Omega\subset\mathbb H^{n+1}$,
we use the normalization of \cite{ACW}*{(1.10)--(1.11)}:
\begin{equation}\label{W-definition}
\begin{aligned}
 W_0(\Omega)&=|\Omega|,\qquad
 W_1(\Omega)=\frac{|\partial\Omega|}{n},\\
 (n-j)W_{j+1}(\Omega)
 &=\int_{\partial\Omega}E_j\,d\mu-jW_{j-1}(\Omega),
 \qquad 1\leq j\leq n-1.
\end{aligned}
\end{equation}
The integral-geometric definition implies positivity, inclusion
monotonicity, and continuity under Hausdorff convergence of convex
bodies. We refer to \cites{Santalo,Solanes2006} and
\cite{ACW}*{Section~1.2}.
Under a normal variation $\partial_tX=\varphi\nu$, the first
variation formula \cite{ACW}*{Lemma~2.3} is
\begin{equation}\label{W-variation-intro}
 \frac{d}{dt}W_j(\Omega_t)=\int_{M_t}E_j\varphi\,d\mu_t,
 \qquad 0\leq j\leq n.
\end{equation}
For $f_j(r)=W_j(B_r)$, this gives
\begin{equation}\label{ball-functions}
 f_j'(r)=\omega_n\sinh^{n-j}r\cosh^j r>0,\qquad
 f_j(0)=0,\qquad \lim_{r\to\infty}f_j(r)=\infty,
\end{equation}
where $\omega_n=|\mathbb S^n|$ and $0\leq j\leq n$.
Thus $f_j^{-1}$ is well defined on $(0,\infty)$.

Along \eqref{flow}--\eqref{normalization}, we have
\begin{equation*}
 E_k=E_\ell F^{k-\ell},\qquad
 \int_{M_t}E_\ell(F-\phi)\,d\mu_t=0.
\end{equation*}
Substituting in \eqref{W-variation-intro} yields
\begin{equation}\label{W-monotonicity}
\begin{aligned}
 \frac{d}{dt}W_\ell(\Omega_t)&=0,\\
 \frac{d}{dt}W_k(\Omega_t)
 &=\int_{M_t}E_\ell F^{k-\ell}(\phi-F)\,d\mu_t\\
 &=-\int_{M_t}E_\ell
       \bigl(F^{k-\ell}-\phi^{k-\ell}\bigr)(F-\phi)\,d\mu_t\leq0.
\end{aligned}
\end{equation}
For a strictly convex hypersurface, the last integral vanishes
if and only if $F$ is spatially constant. The same identities
hold for $\ell=0$, with $E_0=1$ and $F=E_k^{1/k}$.

\subsection{Evolution equations}\label{subsec-evolution}
We write the flow \eqref{flow} as
\begin{equation}\label{speed}
 \partial_tX=-G\nu,\qquad G=F-\phi(t),
 \qquad F=\left(\frac{E_k}{E_\ell}\right)^{1/(k-\ell)}.
\end{equation}
The global term $\phi(t)$ is fixed in spatial and pointwise curvature
differentiations.
Thus
\begin{equation}\label{composite-derivatives}
 \dot G^{ij}=\dot F^{ij},\qquad
 \ddot G^{ij,k\ell}=\ddot F^{ij,k\ell}.
\end{equation}
Set
\begin{equation}\label{linearized-operator}
 \mathcal L=\dot G^{ij}\nabla_i\nabla_j
           =\dot F^{ij}\nabla_i\nabla_j.
\end{equation}
The following evolution equations follow from
\cite{ACW}*{(2.7)--(2.9), (2.12)--(2.13), and (4.4)},
using $\dot F^{ij}h_{ij}=F$.
They hold for any smooth function $\phi(t)$.

\begin{lemma}\label{evolution-lemma}
Along \eqref{speed}, the metric and area element satisfy
\begin{equation}\label{metric-evolution}
 \partial_tg_{ij}=-2Gh_{ij},\qquad
 \partial_tg^{ij}=2Gh^{ij},\qquad
 \partial_t(d\mu_t)=-GH\,d\mu_t.
\end{equation}
In local coordinates, the second fundamental form satisfies
\begin{equation}\label{h-uncommuted}
 \partial_th_{ij}=\nabla_i\nabla_jG
                              -G\bigl((h^2)_{ij}+g_{ij}\bigr)
\end{equation}
and
\begin{equation}\label{h-evolution}
\begin{aligned}
 \partial_th_{ij}={}&\dot F^{k\ell}\nabla_k\nabla_\ell h_{ij}
       +\ddot F^{k\ell,pq}\nabla_i h_{k\ell}\nabla_j h_{pq}\\
 &+\dot F^{k\ell}\bigl((h^2)_{k\ell}+g_{k\ell}\bigr)h_{ij}
       -(2F-\phi)\bigl((h^2)_{ij}+g_{ij}\bigr).
\end{aligned}
\end{equation}
The Weingarten map satisfies
\begin{equation}\label{Weingarten-evolution}
\begin{aligned}
 (\partial_t-\mathcal L)h^i{}_j={}&
       \ddot F^{k\ell,pq}\nabla^i h_{k\ell}\nabla_j h_{pq}\\
 &+\dot F^{k\ell}\bigl((h^2)_{k\ell}+g_{k\ell}\bigr)h^i{}_j
       -\phi h^i{}_kh^k{}_j+(\phi-2F)\delta^i_j.
\end{aligned}
\end{equation}
Moreover,
\begin{equation}\label{F-evolution}
 (\partial_t-\mathcal L)F
 =(F-\phi)\dot F^{ij}\bigl((h^2)_{ij}-g_{ij}\bigr),
\end{equation}
\begin{equation}\label{inverse-evolution}
 (\partial_t-\mathcal L)F^{-1}
 =-\frac2{F^3}\dot F^{ij}\nabla_iF\nabla_jF
       -\frac{F-\phi}{F^2}\dot F^{ij}\bigl((h^2)_{ij}-g_{ij}\bigr),
\end{equation}
and the support function about a fixed point satisfies
\begin{equation}\label{support-evolution}
 (\partial_t-\mathcal L)u
 =(\phi-2F)\cosh r+u\dot F^{ij}(h^2)_{ij}.
\end{equation}
\end{lemma}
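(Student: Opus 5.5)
The plan is to derive everything from the uncommuted equation \eqref{h-uncommuted}. Where the statement points to \cite{ACW}, I would take the first-variation formulas there for a normal variation $\partial_tX=-G\nu$ in a space form of curvature $-1$. Using $h_{ij}=\overline g(\overline\nabla_i\nu,\partial_j)$, one gets $\partial_tg_{ij}=-2Gh_{ij}$. From this, $\partial_tg^{ij}=2Gh^{ij}$ and $\partial_t d\mu_t=\tfrac12 g^{ij}\partial_tg_{ij}\,d\mu_t=-GH\,d\mu_t$, which is \eqref{metric-evolution}. The unit normal evolves by $\partial_t\nu=\nabla G$. Differentiating $h_{ij}$ and using the ambient curvature $-1$ gives \eqref{h-uncommuted}. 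Throughout I use \eqref{composite-derivatives}: $\phi(t)$ is spatially constant, so $\nabla G=\nabla F$ and the curvature derivatives of $G$ and $F$ agree.

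For \eqref{h-evolution}, expand $\nabla_i\nabla_jF=\dot F^{k\ell}\nabla_i\nabla_jh_{k\ell}+\ddot F^{k\ell,pq}\nabla_ih_{k\ell}\nabla_jh_{pq}$. Then commute derivatives by the Simons-type identity, which follows from Codazzi \eqref{Codazzi} and Gauss \eqref{Gauss}:
\begin{equation*}
 \nabla_i\nabla_jh_{k\ell}=\nabla_k\nabla_\ell h_{ij}
 +h_{ij}(h^2)_{k\ell}-h_{k\ell}(h^2)_{ij}
 +h_{ij}g_{k\ell}-h_{k\ell}g_{ij}.
\end{equation*}
Contracting with $\dot F^{k\ell}$ and using $\dot F^{k\ell}h_{k\ell}=F$ gives
\begin{equation*}
 \dot F^{k\ell}\bigl((h^2)_{k\ell}+g_{k\ell}\bigr)h_{ij}-F\bigl((h^2)_{ij}+g_{ij}\bigr).
\end{equation*}
Adding $-(F-\phi)\bigl((h^2)_{ij}+g_{ij}\bigr)$ from \eqref{h-uncommuted} produces the coefficient $-(2F-\phi)$. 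The main obstacle is getting the signs of the curvature terms in this commutator right under the paper's conventions. I would first check them on a geodesic sphere, where $\mathcal W=\coth r\,\operatorname{Id}$ and all the terms must cancel.

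For \eqref{Weingarten-evolution}, write $h^i{}_j=g^{ik}h_{kj}$ and use $\partial_tg^{ik}=2Gh^{ik}$. This adds $2(F-\phi)h^i{}_kh^k{}_j$ to the raised form of \eqref{h-evolution}. Then $-(2F-\phi)h^2+2(F-\phi)h^2=-\phi h^2$, and the remaining identity term is $(\phi-2F)\delta^i_j$.

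For \eqref{F-evolution} I would avoid the second-derivative terms altogether. Raising \eqref{h-uncommuted} and using the metric evolution gives
\begin{equation*}
 \partial_th^i{}_j=\nabla^i\nabla_jG+G\bigl(h^i{}_kh^k{}_j-\delta^i_j\bigr).
\end{equation*}
Hence $\partial_tF=\dot F^j{}_i\partial_th^i{}_j=\mathcal LF+(F-\phi)\dot F^{ij}\bigl((h^2)_{ij}-g_{ij}\bigr)$. Equation \eqref{inverse-evolution} then follows from the chain rule, since $\mathcal L(F^{-1})=-F^{-2}\mathcal LF+2F^{-3}\dot F^{ij}\nabla_iF\nabla_jF$.

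For \eqref{support-evolution}, differentiate $u=\overline g(\sinh r\,\partial_r,\nu)$ in time. Using $\overline\nabla_U(\sinh r\,\partial_r)=\cosh r\,U$ and $\partial_t\nu=\nabla G$ gives
\begin{equation*}
 \partial_tu=-G\cosh r+\nabla^k\cosh r\,\nabla_kF.
\end{equation*}
Contracting the support-function Hessian identity with $\dot F^{ij}$, and using Codazzi and $\dot F^{ij}h_{ij}=F$, gives
\begin{equation*}
 \mathcal Lu=\nabla^k\cosh r\,\nabla_kF+F\cosh r-u\dot F^{ij}(h^2)_{ij}.
\end{equation*}
Subtracting the second identity from the first yields $(\phi-2F)\cosh r+u\dot F^{ij}(h^2)_{ij}$. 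None of these steps uses anything about $\phi$ beyond its spatial constancy, so the equations hold for any smooth $\phi(t)$.
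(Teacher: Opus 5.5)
Your proposal is correct. It is essentially the standard computation behind the formulas the paper simply cites from \cite{ACW}: the paper's proof is just that citation plus $\dot F^{ij}h_{ij}=F$. Reading \eqref{F-evolution} directly off the raised form of \eqref{h-uncommuted} is a clean shortcut that avoids any commutator.

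One thing to fix: the commutator identity you display holds only after contraction with $\dot F^{k\ell}$, not as a tensor identity. Its left side $\nabla_i\nabla_jh_{k\ell}$ is not symmetric in $i,j$, while your right side is. The full Simons identity carries extra terms built from $h_{i\ell}(h^2)_{kj}-h_{kj}(h^2)_{i\ell}$ and $g_{kj}h_{i\ell}-g_{i\ell}h_{kj}$, and these drop out only because $\dot F$ commutes with $\mathcal W$. Also, a geodesic sphere cannot test these signs, since every such term vanishes at an umbilic point.
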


For every $0\leq \ell<k\leq n$, the normal-graph equation for
\eqref{flow}--\eqref{normalization} is uniformly parabolic near
a smooth closed strictly convex hypersurface. Here $\ell=0$ means
$F=E_k^{1/k}$ and $\phi=|M_t|^{-1}\int_{M_t}F\,d\mu_t$.
The normalization $\phi$ is a smooth functional of the
graph and its first two derivatives, with positive
denominator. Its linearization contributes a bounded
finite-rank term on the base space $h^{2+\gamma}$,
with domain $h^{4+\gamma}$, where $0<\gamma<1$.
The local parabolic theory
\cite{Lunardi}*{Sections~8.1 and~8.3}
therefore gives a unique smooth short-time solution.
Nearby smooth initial hypersurfaces have solutions
on a common time interval, depending continuously
on the initial data in $C^2$.
After shortening the interval if necessary,
the solutions remain embedded and strictly convex. We denote by $[0,T_{\max})$ the maximal interval
on which the solution is smooth and strictly convex.

\section{Preservation of positive sectional curvature}
\label{sec-preservation}

We prove preservation of positive sectional curvature for any
positive smooth function $\phi(t)$.

\begin{theorem}\label{preservation}
	Let $n\geq3$ and $1\leq \ell<k\leq n-1$ satisfy $k+\ell\leq n$.
	Suppose that
	$M_t\subset\mathbb H^{n+1}$ is a smooth closed solution of
	\begin{equation*}
		\partial_tX=(\phi(t)-F)\nu,
		\qquad F=\left(\frac{E_k}{E_\ell}\right)^{1/(k-\ell)},
	\end{equation*}
	on $[0,T)$, with principal curvatures in $\Gamma_+$, where
	$\phi$ is a positive smooth function of time. If $M_0$ has
	positive sectional curvature, then $M_t$ has positive sectional
	curvature for every $t\in[0,T)$.
\end{theorem}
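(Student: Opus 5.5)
We follow the maximum-principle strategy of \cite{ACW} for the minimum sectional curvature. Since $M_t$ is strictly convex, the sectional curvature is positive exactly when $S=\kappa_1\kappa_2-1>0$, where $\kappa_1\leq\kappa_2$ are the two smallest principal curvatures. We argue by contradiction. Suppose $t_0$ is the first time at which $S$ attains the value zero, at a point $x_0$. If $\kappa_1$ or $\kappa_2$ is not smooth there, we replace $S$ by a smooth lower support function. Concretely, choose an orthonormal frame at $(x_0,t_0)$ diagonalizing $\mathcal W$, extend it by parallel transport in space and along the flow, and set $\tilde S=\tilde h_{11}\tilde h_{22}-1\geq S$, which touches $S$ at $(x_0,t_0)$. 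We then need the differential inequality $(\partial_t-\mathcal L)\tilde S\geq 0$ at the contact point, up to a term $C\tilde S$ and the usual second-order corrections coming from the multiplicity of $\kappa_1$ and $\kappa_2$. These corrections are handled by the non-smooth maximum principle of \cite{ACW} (Andrews' approach), which produces the favorable terms $-2\sum_{p>2}\dot F^{kk}(\nabla_k h_{1p})^2/(\kappa_p-\kappa_1)$ and the analogous terms for index $2$.

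Next we compute the evolution using \eqref{Weingarten-evolution}. The reaction terms give
\begin{equation*}
\kappa_2\bigl[\dot F^{k\ell}(h^2+g)_{k\ell}\kappa_1-\phi\kappa_1^2+\phi-2F\bigr]+\kappa_1\bigl[\dots\bigr]
=2\kappa_1\kappa_2\dot F^{k\ell}(h^2+g)_{k\ell}-\phi(\kappa_1^2\kappa_2+\kappa_1\kappa_2^2)+(\phi-2F)(\kappa_1+\kappa_2).
\end{equation*}
Substituting $\kappa_1\kappa_2=1$, this becomes
\begin{equation*}
2\dot F^{k\ell}(h^2+g)_{k\ell}-2F(\kappa_1+\kappa_2)\geq 2\bigl(\dot F^{ij}g_{ij}+\dot F^{ij}(h^2)_{ij}\bigr)-2F(\kappa_1+\kappa_2).
\end{equation*}
The $\phi$ terms cancel exactly, which is why any positive $\phi$ works. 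Nonnegativity of this zero-order part will follow from \eqref{trace-bounds} together with the structure of $F$: one combines $\sum_i\dot F^i(1+\kappa_i^2)\geq \sum_i\dot F^i\cdot 2\kappa_i=2F$ with the bound $\kappa_1+\kappa_2$ versus larger curvatures. If this elementary estimate is not sharp enough, we use $\kappa_i\geq\kappa_2\geq 1/\kappa_1$ for $i\geq 2$ and the monotonicity $\kappa_i^2\dot F^i$ increasing from \eqref{psi-properties}.

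The main obstacle is the gradient terms. These are
\begin{equation*}
\kappa_2\ddot F^{k\ell,pq}\nabla_1h_{k\ell}\nabla_1h_{pq}+\kappa_1\ddot F^{k\ell,pq}\nabla_2h_{k\ell}\nabla_2h_{pq}-2\dot F^{kk}\nabla_kh_{11}\nabla_kh_{22},
\end{equation*}
together with the positive terms from the support construction. Since $F$ is concave, the Hessian terms are nonpositive, so concavity alone cannot help. Instead we use the first-order condition $\nabla\tilde S=0$ at the contact point, namely $\kappa_2\nabla_mh_{11}+\kappa_1\nabla_mh_{22}=0$. This lets us express $\nabla_m h_{22}=-\kappa_2^2\nabla_mh_{11}$, using $\kappa_1\kappa_2=1$. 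After this substitution, and after rewriting $F$ through $\psi=-F^{-1}$ and the dual quotient $F_*$ of \eqref{dual-quotient}, the whole quadratic form in $\nabla h$ should reduce to a paired concavity inequality for $F_*=(E_{n-\ell}/E_{n-k})^{1/(k-\ell)}$ evaluated on the inverse curvatures. In this inequality the $(\nabla_m h_{11},\nabla_m h_{22})$ directions are coupled with weights $\kappa_2,\kappa_1$. This is the statement of Proposition~\ref{key-lemma}, which holds exactly when $k+\ell\leq n$. The off-diagonal terms $\nabla_1h_{pq}$ and $\nabla_2h_{pq}$ with $p,q>2$ are controlled by the divided-difference terms in \eqref{matrix-Hessian} together with the third inequality of \eqref{psi-properties}.

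Assuming Proposition~\ref{key-lemma}, we will obtain $(\partial_t-\mathcal L)\tilde S\geq -C|\tilde S|$ near the contact point. The strong maximum principle, or equivalently the usual perturbation argument applied to $S+\varepsilon e^{Ct}$, then contradicts the existence of a first zero. Hence positivity is preserved on $[0,T)$.
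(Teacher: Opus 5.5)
\textbf{Gap: the test function you write down is too weak to reach the paired inequality.} Your architecture matches the paper's: a support function, the contact condition $\kappa_2\nabla_mh_{11}+\kappa_1\nabla_mh_{22}=0$, cancellation of $\phi$, and reduction to Proposition~\ref{key-lemma} for $F_*$ with $r=n-\ell$, $s=n-k$. The problem is that for $\tilde S=\tilde h_{11}\tilde h_{22}-1$ the derivative terms are exactly the ones you list, and they lack $+2\sum_m\dot F^m(\nabla_mh_{12})^2$.

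That term comes from $-h(v_1,v_2)^2$ in $f=\det\bigl(h(v_i,v_j)\bigr)_{i,j=1}^2-1$, i.e.\ from using the sectional curvature of the frame plane itself, which still dominates $S$ by Cauchy--Binet. It is indispensable for two reasons:
\begin{enumerate}
\item It combines with the off-diagonal divided differences to give the coefficient $(\kappa_1\dot\psi^1-\kappa_2\dot\psi^2)/(\kappa_1-\kappa_2)$ of $(\nabla_1h_{22})^2$ and $(\nabla_2h_{11})^2$. This is what turns the diagonal groups into the paired form $\mathcal J[q]$.
\item It also enters the coefficient of $(\nabla_ph_{12})^2$.
\end{enumerate}

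Concretely, take $\kappa=(a^{-1},a,a^2,\dots,a^2)$ with $\nabla_1h_{11}=1$, $\nabla_1h_{22}=\nabla_2h_{12}=-a^2$ and all other components zero. These are the data of Section~\ref{sec-counterexamples}; the constraint holds and all rotation terms vanish. Use $\ddot F^{ij,pq}(I)\xi_{ij}\xi_{pq}=\frac{k+\ell-1}{n^2(n-1)}\bigl((\operatorname{tr}\xi)^2-n\operatorname{tr}\xi^2\bigr)$ and let $a\downarrow1$. The full form $Q[F]$ tends to $\frac{4(n-k-\ell)}{n(n-1)}\geq0$. Your form tends to $\frac{2(n+1)-4(k+\ell)}{n(n-1)}$, which is negative whenever $k+\ell>(n+1)/2$. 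For example, it is negative in the only admissible case for $n=3$, namely $(\ell,k)=(1,2)$. So with your $\tilde S$ the claimed reduction to Proposition~\ref{key-lemma} is false.

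The favorable terms $\frac{\kappa_1\kappa_2}{\kappa_i(\kappa_p-\kappa_i)}(\nabla_mh_{ip})^2$, $p\in\mathcal J_i$, also do not come from a parallel frame. They require:
\begin{itemize}
\item prescribing the frame jet $\nabla_mv_i=-\sum_{p\in\mathcal J_i}\frac{\nabla_mh_{ip}}{\kappa_p-\kappa_i}e_p$;
\item knowing that $\nabla_mh_{ip}=0$ when $\kappa_p=\kappa_i$, which the paper gets by comparing the two supports built on $(e_i\pm e_p)/\sqrt2$;
\item handling repeated eigenvalues in the algebra, which the paper does by perturbing to $(\kappa^\varepsilon,T^\varepsilon)$.
\end{itemize}

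\textbf{The closing step also needs repair.} Your gradient estimate uses the constraint, which is available only at a contact point where the support has zero spatial gradient. So $(\partial_t-\mathcal L)\tilde S\geq-C|\tilde S|$ is not available in a neighbourhood, and the strong maximum principle is not at your disposal. Moreover, a perturbation $S+\varepsilon e^{Ct}$ yields only $S\geq0$, not $S>0$.

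Use instead a spatially constant barrier from below, $b(t)=\varepsilon_0e^{-(C_\tau+1)t}$ with $\varepsilon_0<\min_MS(\cdot,0)$, as the paper does. At a first contact, $\nabla b=0$ gives the constraint, but now $S=b(t_0)>0$. Two consequences follow:
\begin{itemize}
\item The constraint must be kept as $\kappa_2\nabla_mh_{11}+\kappa_1\nabla_mh_{22}=0$. Do not rewrite it as $\nabla_mh_{22}=-\kappa_2^2\nabla_mh_{11}$, since that uses $\kappa_1\kappa_2=1$.
\item The reaction term must be estimated away from $\{\kappa_1\kappa_2=1\}$.
\end{itemize}
For the reaction term, your AM--GM step goes the wrong way, since $2F\leq F(\kappa_1+\kappa_2)$. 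What works is the exact identity
\[
\mathcal R=2\sum_i\dot F^i(\kappa_i-\kappa_1)(\kappa_i-\kappa_2)+S\bigl[2\dot F^{ij}(h^2)_{ij}-\phi(\kappa_1+\kappa_2)\bigr].
\]
This gives $\mathcal R\geq-C_\tau S$ for $S\geq0$, which contradicts $b'=-(C_\tau+1)b$ at the contact.
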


We use $G=F-\phi(t)$ and
$\mathcal L=\dot F^{ij}\nabla_i\nabla_j$
as in Section~\ref{subsec-evolution}.
Order the principal curvatures as
$0<\kappa_1\leq\cdots\leq\kappa_n$, and set
\begin{equation*}
S=\kappa_1\kappa_2-1.
\end{equation*}
By the Gauss equation, $S$ is the minimum
sectional curvature. We shall show that
$S$ remains positive.

The function $S$ need not be smooth at repeated principal
curvatures. We therefore use smooth lower supports, a method
also used for curvature eigenvalue estimates in
\cite{BryanIvakiScheuer2023}.  A smooth
function $\varphi$ is a lower support for $S$ at $(x_0,t_0)$ if
$\varphi\le S$ in a backward parabolic neighbourhood of this
point and $\varphi(x_0,t_0)=S(x_0,t_0)$. The function $\varphi$
is defined on an open neighbourhood of the contact point, but
the support inequality is required only for $t\le t_0$.
\begin{lemma}\label{lem:rotating}
	Let $\varphi$ be a smooth lower support for $S$ at $(x_0,t_0)$,
	where $t_0>0$. Choose a $g(t_0)$-orthonormal principal frame
	$\{e_1,\ldots,e_n\}$ at $x_0$, and set
	\begin{equation*}
		\mathcal J_i=\{p>2\mid\kappa_p>\kappa_i\},
		\qquad i=1,2.
	\end{equation*}
	For a smooth symmetric curvature function $\Theta$, define at
	this point
	\begin{equation}\label{eq:third-order-form}
		\begin{aligned}
			Q[\Theta]={}&
			\kappa_2\ddot\Theta^{ij,pq}
			\nabla_1h_{ij}\nabla_1h_{pq}
			+\kappa_1\ddot\Theta^{ij,pq}
			\nabla_2h_{ij}\nabla_2h_{pq}\\
			&-2\sum_{m=1}^n\dot\Theta^m
			\left(\nabla_mh_{11}\nabla_mh_{22}
			-(\nabla_mh_{12})^2\right)\\
			&+2\sum_{m=1}^n\dot\Theta^m
			\sum_{i=1}^2\sum_{p\in\mathcal J_i}
			\frac{\kappa_1\kappa_2}
			{\kappa_i(\kappa_p-\kappa_i)}
			(\nabla_mh_{ip})^2.
		\end{aligned}
	\end{equation}
	Then, for $m=1,\ldots,n$,
	\begin{equation}\label{eq:support-gradient-multiplicity}
		\kappa_2\nabla_mh_{11}+\kappa_1\nabla_mh_{22}
		=\nabla_m\varphi,
	\end{equation}
	and
	\begin{equation}\label{eq:support-zero-gap}
		\nabla_mh_{ip}=0
		\quad\text{if }i\in\{1,2\},\ p>2,\ \kappa_p=\kappa_i.
	\end{equation}
	Moreover, at the contact point,
	\begin{equation}\label{eq:rotating-inequality}
		(\partial_t-\mathcal L)\varphi\ge Q[G]+\mathcal R,
	\end{equation}
	where
	\begin{equation}\label{sectional-reaction-original}
		\begin{aligned}
			\mathcal R={}&
			2\dot F^{ij}\bigl((h^2)_{ij}+g_{ij}\bigr)
			\kappa_1\kappa_2
			-\phi\kappa_1\kappa_2(\kappa_1+\kappa_2)\\
			&+(\phi-2F)(\kappa_1+\kappa_2).
		\end{aligned}
	\end{equation}
\end{lemma}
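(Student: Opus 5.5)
The plan is to reduce everything to a comparison of the smooth support $\varphi$ with the smooth function $\kappa_1\kappa_2$ built from a fixed frame, with the help of perturbation theory for eigenvalues of symmetric matrices.

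\emph{Step 1: frame and comparison function.} Extend the principal frame $\{e_1,\ldots,e_n\}$ at $(x_0,t_0)$ to a neighbourhood in space-time. First parallel transport it radially in space at time $t_0$; then evolve it in time so that it stays $g(t)$-orthonormal, using \eqref{metric-evolution}. In this frame the components $h_{ij}(x,t)$ form a smooth symmetric matrix that is diagonal at the contact point. Consider the upper-left $2\times2$ block and the complementary block. By the min–max principle, $\kappa_1\kappa_2$ is bounded above by the product of the two smallest eigenvalues of the block-restricted form, so to first order $S$ lies below the smooth function $h_{11}h_{22}-h_{12}^2$, plus the second-order corrections coming from the off-diagonal entries $h_{ip}$ with $p>2$. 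Concretely, I will use the standard perturbation bound
\begin{equation*}
 \lambda_1\lambda_2\le h_{11}h_{22}-h_{12}^2
 -\sum_{i=1}^2\sum_{p\in\mathcal J_i}\frac{\kappa_1\kappa_2}{\kappa_i(\kappa_p-\kappa_i)}h_{ip}^2+O(|h_{ip}|^3)
\end{equation*}
near the point. Call the right-hand side minus one $\tilde S$. Then $\varphi\le S\le\tilde S$ in a backward neighbourhood, with equality at $(x_0,t_0)$. The sign and constant of the correction come from a second-order expansion of the Schur complement.

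\emph{Step 2: first- and second-order conditions.} The function $\tilde S-\varphi$ attains a local minimum in space at $x_0$, which gives $\nabla\varphi=\nabla\tilde S$; this is \eqref{eq:support-gradient-multiplicity}. For \eqref{eq:support-zero-gap}, use the fact that when $\kappa_p=\kappa_i$ there is a rotational freedom in the frame: rotate $e_i$ toward $e_p$ by an angle proportional to $x^m$. This yields a family of comparison functions whose first-order variation is $\pm2\kappa_{i'}\nabla_mh_{ip}$, where $i'$ is the other index in $\{1,2\}$. Both signs must be compatible with the minimum, which forces $\nabla_mh_{ip}=0$. Next, $\partial_t(\tilde S-\varphi)\le0$ and $\mathcal L(\tilde S-\varphi)\ge0$ at the point, so $(\partial_t-\mathcal L)\varphi\ge(\partial_t-\mathcal L)\tilde S$.

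\emph{Step 3: compute $(\partial_t-\mathcal L)\tilde S$.} Apply \eqref{Weingarten-evolution} to $h_{11}$ and $h_{22}$ and multiply by $\kappa_2$ and $\kappa_1$ respectively; this produces $\kappa_2\ddot F\nabla_1h\nabla_1h+\kappa_1\ddot F\nabla_2h\nabla_2h$ together with the reaction terms. The reaction terms combine exactly into $\mathcal R$: from $\dot F((h^2)+g)h$ we get $2\dot F(\cdots)\kappa_1\kappa_2$, from $-\phi h^2$ we get $-\phi\kappa_1\kappa_2(\kappa_1+\kappa_2)$, and the last term gives $(\phi-2F)(\kappa_1+\kappa_2)$. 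The term $-\mathcal L$ acting on the quadratic expressions produces
\begin{equation*}
 -2\dot F^m\bigl(\nabla_mh_{11}\nabla_mh_{22}-(\nabla_mh_{12})^2\bigr)
\end{equation*}
and
\begin{equation*}
 +2\dot F^m\sum\frac{\kappa_1\kappa_2}{\kappa_i(\kappa_p-\kappa_i)}(\nabla_mh_{ip})^2.
\end{equation*}
The time derivatives of the correction terms vanish, since $h_{ip}=0$ at the point. The frame time-derivative terms cancel because the frame is kept orthonormal and only the diagonal entries of $h$ enter. Since $\dot G=\dot F$ and $\ddot G=\ddot F$ by \eqref{composite-derivatives}, the result is exactly $Q[G]+\mathcal R$.

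\emph{Main obstacle.} The delicate point is Step 1: establishing the eigenvalue comparison with the precise correction coefficient $\kappa_1\kappa_2/(\kappa_i(\kappa_p-\kappa_i))$ while handling indices $p$ with $\kappa_p=\kappa_i$, where the divided difference blows up. I would treat these by restricting the sum to $\mathcal J_i$ and placing the degenerate directions in the $2\times2$ block via the rotation argument. I would verify the coefficient by a second-order Schur-complement expansion of the determinant of the lowest $2\times2$ spectral block.
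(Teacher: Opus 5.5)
Your overall architecture is sound: a parallel frame, with the second-order eigenvalue correction built into the barrier, followed by the Weingarten evolution. Your Step~3 bookkeeping correctly reproduces $Q[G]+\mathcal R$. However, two steps do not work as written.

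\textbf{The zero-gap identity.} Your argument for \eqref{eq:support-zero-gap} fails. Take $v_i=\cos\theta\,e_i+\sin\theta\,e_p$ with $\theta=cx^m$. Then $h(v_i,v_i)=\cos^2\theta\,h_{ii}+\sin2\theta\,h_{ip}+\sin^2\theta\,h_{pp}$ has $m$-derivative $\nabla_mh_{ii}$ at $x_0$ for every $c$, because $\theta(x_0)=0$ and $h_{ip}(x_0)=0$. So all rotated barriers have the same gradient, and there is no first-order variation $\pm2\kappa_{i'}\nabla_mh_{ip}$ to exploit. You need a rotation by a \emph{fixed} angle, and this is where $\kappa_p=\kappa_i$ enters. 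The pairs $\bigl((e_i\pm e_p)/\sqrt2,\,e_{i'}\bigr)$ are still minimizing pairs at the contact point, so both give barriers touching $S$ from above. Subtracting the two gradient identities gives $2\kappa_{i'}\nabla_mh_{ip}=0$; this is the paper's argument. Alternatively, keep $\theta=cx^m$ but use the Hessian test. Then $\nabla_m\nabla_m$ of the barrier gains $\kappa_{i'}\bigl(4c\nabla_mh_{ip}+2c^2(\kappa_p-\kappa_i)\bigr)$. The $c^2$ term vanishes, and nonnegativity for all $c\in\mathbb R$ forces $\nabla_mh_{ip}=0$.

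\textbf{The Step~1 inequality.} This is the point you flag, and your proposed proof does not cover it. If $\kappa_p=\kappa_2$ for some $p>2$, there is no isolated lowest two-dimensional spectral subspace. Then $\kappa_1\kappa_2$ is not smooth near the point, and a Schur-complement expansion of ``the lowest $2\times2$ spectral block'' is unavailable. The inequality is nevertheless true. The missing idea is Cauchy--Binet: $\kappa_1\kappa_2$ is at most the determinant of $h$ compressed to \emph{any} $2$-plane.

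Take the plane spanned by $w_i=e_i-\sum_{p\in\mathcal J_i}\frac{h_{ip}(x,t)}{\kappa_p-\kappa_i}e_p$, $i=1,2$, with the $\kappa$'s frozen at the contact point. Then $\det h(w_a,w_b)/\det g(w_a,w_b)-1$ is a smooth function that is $\geq S$ exactly, with equality at the contact point. Its second-order jet coincides with that of your $h_{11}h_{22}-h_{12}^2-\sum\cdots-1$. The discrepancy is cubic in $h-\operatorname{diag}(\kappa)$, which is harmless for the first- and second-order tests. Note that it contains terms such as $h_{1p}^2(h_{pp}-\kappa_p)$, so it is not $O(|h_{ip}|^3)$ as you wrote.

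The paper encodes the same tilted plane in the first-order jet of a rotating orthonormal frame, $\nabla_mv_i=-\sum_{p\in\mathcal J_i}\frac{\nabla_mh_{ip}}{\kappa_p-\kappa_i}e_p$. With this choice, $\det h(v_a,v_b)-1\geq S$ holds by Cauchy--Binet with no error term, and the correction terms appear in $\nabla_m\nabla_m h(v_i,v_i)$.

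With these two repairs, your route is a legitimate variant of the paper's. You place the correction in the barrier over a parallel frame; the paper places it in the frame and gets an exact barrier.
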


\begin{proof}
	All tensor components and derivatives below are evaluated at
	$(x_0,t_0)$ in the chosen principal frame. For any orthonormal
	pair $w_1,w_2$, the Cauchy--Binet formula gives
	\begin{equation*}
		\det\bigl(h(w_i,w_j)\bigr)_{i,j=1}^2
		=
		\sum_{a<b}\kappa_a\kappa_b
		(w_1^aw_2^b-w_1^bw_2^a)^2
		\ge\kappa_1\kappa_2,
	\end{equation*}
	since the squared minors sum to one. Equality holds for
	$(w_1,w_2)=(e_1,e_2)$. Thus any smooth local orthonormal
	extension of this pair defines a function
	\begin{equation*}
		f=\det\bigl(h(w_i,w_j)\bigr)_{i,j=1}^2-1
	\end{equation*}
	satisfying $f\ge S\ge\varphi$, with equality at the contact
	point. Taking the first spatial derivative of $f-\varphi$
	there proves \eqref{eq:support-gradient-multiplicity}.

	To prove \eqref{eq:support-zero-gap}, fix $i\in\{1,2\}$ and
	$p>2$ with $\kappa_p=\kappa_i$, and let $j$ be the other
	index in $\{1,2\}$. Set
	\begin{equation*}
		v_\pm=\frac{e_i\pm e_p}{\sqrt2}.
	\end{equation*}
	Both orthonormal pairs $(v_+,e_j)$ and $(v_-,e_j)$ realize
	the minimum $\kappa_1\kappa_2$. Extend each pair smoothly and
	orthonormally, with vanishing spatial covariant derivatives
	at the contact point, and let
	\begin{equation*}
		f_\pm
		=
		h(v_\pm,v_\pm)h(e_j,e_j)-h(v_\pm,e_j)^2-1.
	\end{equation*}
	Then $f_\pm\ge S\ge\varphi$, with equality at $(x_0,t_0)$.
	Consequently,
	$\nabla_mf_+=\nabla_mf_-=\nabla_m\varphi$, and subtraction gives
	\begin{equation*}
		0=\nabla_mf_+-\nabla_mf_-
		=2\kappa_j\nabla_mh_{ip}.
	\end{equation*}
	Since $\kappa_j>0$, this proves \eqref{eq:support-zero-gap}.

	We now choose the extension used in the evolution calculation.
	Extend the principal frame to a smooth local
	$g(t)$-orthonormal frame $v_1,\ldots,v_n$ on the reference
	manifold, with $v_i=e_i$ at the contact point, and prescribe
	\begin{equation}\label{eq:frame-jet}
		\nabla_m v_i
		=
		-\sum_{p\in\mathcal J_i}
		\frac{\nabla_mh_{ip}}{\kappa_p-\kappa_i}\,e_p,
		\qquad i=1,2.
	\end{equation}
	Complete these spatial derivatives skew-symmetrically, with
	no rotation inside $\operatorname{span}\{e_1,e_2\}$ or its
	orthogonal complement. Only positive spectral gaps occur in
	\eqref{eq:frame-jet}. The derivatives corresponding to zero
	gaps vanish by \eqref{eq:support-zero-gap}. Empty sums are
	understood as zero. In the time direction, choose
	\begin{equation*}
		\partial_t v_i=G\mathcal W(v_i)
	\end{equation*}
	at the contact point, consistently with
	$\partial_tg=-2Gh$.

	For this frame, set
	\begin{equation*}
		f=\det\bigl(h(v_i,v_j)\bigr)_{i,j=1}^2-1.
	\end{equation*}
	At the contact point,
	\begin{equation*}
		\nabla_m\bigl(h(v_i,v_j)\bigr)=\nabla_mh_{ij},
		\qquad i,j\le2.
	\end{equation*}
	Extend the direction $e_m$ geodesically. Differentiating
	orthonormality gives
	\begin{equation*}
		\langle\nabla_m\nabla_m v_i,v_i\rangle
		=-|\nabla_m v_i|^2.
	\end{equation*}
	Together with \eqref{eq:frame-jet}, this yields
	\begin{equation*}
		\nabla_m\nabla_m\bigl(h(v_i,v_i)\bigr)
		=
		\nabla_m\nabla_mh_{ii}
		-2\sum_{p\in\mathcal J_i}
		\frac{(\nabla_mh_{ip})^2}{\kappa_p-\kappa_i},
		\qquad i=1,2.
	\end{equation*}
	The second derivative formula for the two-dimensional
	determinant therefore gives
	\begin{equation}\label{eq:rotating-spatial}
		\begin{aligned}
			\nabla_m\nabla_mf={}&
			\kappa_2\nabla_m\nabla_mh_{11}
			+\kappa_1\nabla_m\nabla_mh_{22}-2\sum_{i=1}^2\sum_{p\in\mathcal J_i}
			\frac{\kappa_1\kappa_2}
			{\kappa_i(\kappa_p-\kappa_i)}
			(\nabla_mh_{ip})^2\\
			&+2\left(\nabla_mh_{11}\nabla_mh_{22}
			-(\nabla_mh_{12})^2\right).
		\end{aligned}
	\end{equation}

	For the time derivative, use fixed local coordinates agreeing
	with the principal frame at $(x_0,t_0)$. The choice of
	$\partial_t v_i$ and the metric evolution imply
	\begin{equation*}
		\begin{aligned}
			\partial_t f
			&=
			\kappa_2\bigl(\partial_t h_{11}+2G\kappa_1^2\bigr)
			+\kappa_1\bigl(\partial_t h_{22}+2G\kappa_2^2\bigr)\\
			&=
			\kappa_2\partial_t h^1{}_1
			+\kappa_1\partial_t h^2{}_2.
		\end{aligned}
	\end{equation*}
	Contracting \eqref{eq:rotating-spatial} with
	$\dot G^m=\dot F^m$ and substituting
	\eqref{Weingarten-evolution}, we obtain
	\begin{equation*}
		(\partial_t-\mathcal L)f=Q[G]+\mathcal R
	\end{equation*}
	at the contact point.

	Finally, $f-\varphi\ge0$ in a backward parabolic
	neighbourhood and vanishes at $(x_0,t_0)$. The time derivative
	from the left and the spatial second derivative test give
	\begin{equation}\label{eq:rotating-time-comparison}
		\partial_t\varphi\ge\partial_t f,
		\qquad
		\nabla^2\varphi\le\nabla^2f.
	\end{equation}
	Since $(\dot G^{ij})$ is positive definite,
	\begin{equation*}
		(\partial_t-\mathcal L)\varphi
		\ge(\partial_t-\mathcal L)f
		=Q[G]+\mathcal R.
	\end{equation*}
	This proves \eqref{eq:rotating-inequality}.
\end{proof}

We estimate the derivative terms in
\eqref{eq:rotating-inequality} for lower supports with zero
spatial gradient, as required in the preservation argument. The proof uses the paired concavity inequality in
Lemma~\ref{key-lemma}, which is proved in
Section~\ref{sec-algebra}.

\begin{lemma}\label{lem:third-order-nonnegative}
	Under the assumptions of Lemma~\ref{lem:rotating}, suppose that
	$\nabla\varphi(x_0,t_0)=0$. Then
	\begin{equation}\label{gradient-nonnegative}
		Q[G]=Q[F]\ge\frac2F\left(
		\kappa_2(\nabla_1F)^2+\kappa_1(\nabla_2F)^2\right)\ge0.
	\end{equation}
	Consequently,
	\begin{equation}\label{eq:support-reaction-lower}
		(\partial_t-\mathcal L)\varphi\ge\mathcal R
	\end{equation}
	at the contact point.
\end{lemma}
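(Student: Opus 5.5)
The plan is to pass from $F$ to $\psi=-F^{-1}$ and then to the dual quotient $F_*$, where the paired concavity inequality (Proposition~\ref{key-lemma}) applies. The equality $Q[G]=Q[F]$ is immediate from \eqref{composite-derivatives}, since $\phi(t)$ does not enter $\dot G$ or $\ddot G$. The hypothesis $\nabla\varphi=0$ combined with \eqref{eq:support-gradient-multiplicity} gives the Codazzi-type relation
\begin{equation*}
 \kappa_2\nabla_mh_{11}+\kappa_1\nabla_mh_{22}=0,\qquad m=1,\ldots,n,
\end{equation*}
and \eqref{eq:support-zero-gap} lets us treat only positive spectral gaps.

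First I rewrite the Hessian terms of $F$ through $\psi$. Since $F=-\psi^{-1}$, we have $\dot F=F^2\dot\psi$ and
\begin{equation*}
 \ddot F^{ij,pq}\xi_{ij}\xi_{pq}=F^2\ddot\psi^{ij,pq}\xi_{ij}\xi_{pq}-\frac{2}{F}(\dot F^{ij}\xi_{ij})^2 .
\end{equation*}
Taking $\xi=\nabla_1h$ and $\xi=\nabla_2h$, the last term produces $-\tfrac2F\bigl(\kappa_2(\nabla_1F)^2+\kappa_1(\nabla_2F)^2\bigr)$. This has the wrong sign for the claimed bound, so I keep track of it carefully. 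I suspect the intended identity runs the other way: the cleanest route is $Q[F]=F^2Q[\psi]+\tfrac{2}{F}\bigl(\kappa_2(\nabla_1F)^2+\kappa_1(\nabla_2F)^2\bigr)$. Indeed, $\psi=-1/F$ gives $\ddot\psi=\ddot F/F^2-2\dot F\otimes\dot F/F^3$, so $\ddot F=F^2\ddot\psi+\tfrac2F\dot F\otimes\dot F$, and the sign is $+$. Together with $\dot F=F^2\dot\psi$, this gives exactly $Q[F]=F^2Q[\psi]+\tfrac2F(\cdots)$. The statement therefore reduces to proving $Q[\psi]\ge0$ under the Codazzi relation.

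Next I expand $Q[\psi]$ using \eqref{matrix-Hessian}. It splits into three pieces:
\begin{itemize}
\item a diagonal part $\kappa_2\ddot\psi^{ij}\nabla_1h_{ii}\nabla_1h_{jj}+\kappa_1\ddot\psi^{ij}\nabla_2h_{ii}\nabla_2h_{jj}$;
\item off-diagonal parts with coefficients $\frac{\dot\psi^i-\dot\psi^j}{\kappa_i-\kappa_j}$;
\item the gradient terms $-2\dot\psi^m(\nabla_mh_{11}\nabla_mh_{22}-(\nabla_mh_{12})^2)$ and the positive rotation terms.
\end{itemize}
Using total symmetry of $\nabla h$, I group terms by the unordered index triples. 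For components $\nabla_mh_{ip}$ with $i\in\{1,2\}$ and $p>2$, the off-diagonal coefficient combined with the rotation term $\frac{\kappa_1\kappa_2}{\kappa_i(\kappa_p-\kappa_i)}\dot\psi^m$ should be controlled by the third inequality in \eqref{psi-properties}, together with the monotonicity $\dot\psi^i\ge\dot\psi^j$ for $\kappa_i\le\kappa_j$. The $(\nabla_mh_{12})^2$ terms are nonnegative outright. The terms $-2\dot\psi^m\nabla_mh_{11}\nabla_mh_{22}$ become $+2\dot\psi^m\frac{\kappa_2}{\kappa_1}(\nabla_mh_{11})^2\ge0$ after substituting the Codazzi relation; this is the key gain from $\nabla\varphi=0$.

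What remains is the diagonal Hessian part for $m=1,2$, where $\nabla_1h_{22}$ and $\nabla_2h_{11}$ are tied by the relation. Here I pass to $F_*$ at $A^{-1}$ via $-\psi(A^{-1})=F_*(A)$: the diagonal second derivatives of $\psi$ become those of $F_*$ plus first-order terms, and the constrained direction becomes the paired direction in Proposition~\ref{key-lemma}, which holds exactly when $k+\ell\le n$. \textbf{This matching is the main obstacle.} It requires translating the coordinates $\nabla_1h_{ii}$, $\nabla_2h_{ii}$, subject to $\kappa_2\nabla_mh_{11}=-\kappa_1\nabla_mh_{22}$, into the variables of the paired concavity inequality, and checking that the leftover first-order terms from the inversion are absorbed by the positive gradient terms above. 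If the diagonal terms for $m\ge3$ do not fit, I would treat them by plain concavity of $\psi$, since those directions are unconstrained and $\ddot\psi\le0$ there.

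Finally, \eqref{eq:support-reaction-lower} follows by inserting $Q[G]\ge0$ into \eqref{eq:rotating-inequality}.
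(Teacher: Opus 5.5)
Your reduction matches the paper's. After your self-correction, $Q[F]=F^2Q[\psi]+\frac2F\bigl(\kappa_2(\nabla_1F)^2+\kappa_1(\nabla_2F)^2\bigr)$ is exactly \eqref{gradient-transfer}, and the route you outline (expand by \eqref{matrix-Hessian}, group by Codazzi, use \eqref{psi-properties} off the diagonal block, apply Proposition~\ref{key-lemma} to $F_*$ on it) is also the paper's route. The gap is the step you call the main obstacle, which is the substance of the proof, and the bookkeeping you sketch for it does not close.

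The $(1,2)$ entries of the off-diagonal Hessian sums, $2\kappa_2\frac{\dot\psi^1-\dot\psi^2}{\kappa_1-\kappa_2}(\nabla_1h_{12})^2$ and $2\kappa_1\frac{\dot\psi^1-\dot\psi^2}{\kappa_1-\kappa_2}(\nabla_2h_{12})^2$, are nonpositive and you never place them. By Codazzi they are diagonal-block terms: $\nabla_1h_{12}=\nabla_2h_{11}$ and $\nabla_2h_{12}=\nabla_1h_{22}$. For $v_i=\nabla_1h_{ii}$ the paper combines three terms: $2\kappa_1\frac{\dot\psi^1-\dot\psi^2}{\kappa_1-\kappa_2}v_2^2$, the square $2\dot\psi^2(\nabla_2h_{12})^2=2\dot\psi^2v_2^2$, and the product $-2\dot\psi^1v_1v_2$. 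Using $v_2=-\kappa_2v_1/\kappa_1$, their sum is $2\kappa_2\frac{\kappa_1^2\dot\psi^1-\kappa_2^2\dot\psi^2}{\kappa_1-\kappa_2}(v_1/\kappa_1)^2$.

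Now set $\lambda_i=\kappa_i^{-1}$ and $\xi_i=-v_i/\kappa_i^2$. Then $\sum_{i,j}\ddot\psi^{ij}v_iv_j=-D^2F_*[\xi,\xi]-2\sum_i\partial_iF_*\xi_i^2/\lambda_i$. The inversion terms with $i=1,2$, plus the term above divided by $\kappa_2$, give exactly $\frac{2\partial_2F_*}{\lambda_1-\lambda_2}\xi_2^2-\frac{2\partial_1F_*}{\lambda_1-\lambda_2}\xi_1^2$ (using the constraint again). For $p\geq3$, the inversion term is cancelled only by the rotation term with derivative direction $m=p$, namely $2\kappa_2\frac{\dot\psi^p}{\kappa_p-\kappa_1}(\nabla_ph_{1p})^2=2\kappa_2\frac{\dot\psi^p}{\kappa_p-\kappa_1}v_p^2$. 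After dividing by $\kappa_2$ this leaves $\frac{2\partial_pF_*}{\lambda_1-\lambda_p}\xi_p^2$. None of your ``positive gradient terms'' involves $v_p$. Only after all this is the direction-$1$ group equal to $\kappa_2$ times the form in \eqref{paired-Jacobi}.

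So these squares, products and rotation terms are not spare. If you discard $(\nabla_mh_{12})^2$ as ``nonnegative outright'' and spend the rotation terms on the components $\nabla_mh_{ip}$, what remains of the block is nonpositive. Moreover, for $k+\ell=n$ the form in \eqref{paired-Jacobi} tends to $0$ along the test directions of its necessity proof, so there is no slack at all. The square $2\dot\psi^p(\nabla_ph_{12})^2$, $p\geq3$, is likewise needed, together with $\kappa_p^2\dot\psi^p\geq\kappa_2^2\dot\psi^2$, to beat the two nonpositive Hessian entries in the coefficient of $(\nabla_ph_{12})^2$. Your fallback of treating leftover Hessian terms by concavity of $\psi$ has the sign backwards. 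They enter $Q[\psi]$ with positive weights $\kappa_2,\kappa_1$, so concavity is the obstruction, and such terms occur only in directions $1$ and $2$.

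Two further pieces are missing. First, the direction-$2$ group is not a copy of the direction-$1$ group. There the constraint pairs $\nabla_2h_{11}$ with $\nabla_2h_{22}$, not $\nabla_1h_{22}$ with $\nabla_2h_{11}$ as you wrote. Its rotation weights are $2\kappa_1\dot\psi^p/(\kappa_p-\kappa_2)$. The paper writes this group as $\kappa_1$ times the direction-$1$ form evaluated at $(\nabla_2h_{11},\ldots,\nabla_2h_{nn})$, plus the nonnegative correction $2\kappa_1\sum_{p>2}\dot\psi^p\bigl(\frac1{\kappa_p-\kappa_2}-\frac1{\kappa_p-\kappa_1}\bigr)(\nabla_2h_{pp})^2$.

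Second, everything above assumes distinct eigenvalues. Proposition~\ref{key-lemma} needs $\lambda_1>\lambda_2$, i.e.\ $\kappa_1<\kappa_2$, and the grouping divides by eigenvalue gaps. By contrast, \eqref{eq:support-zero-gap} only removes the zero-gap rotation terms. The paper handles repeated eigenvalues, including $\kappa_1=\kappa_2$, by setting $\kappa_i^\varepsilon=\kappa_i+i\varepsilon$ and $T^\varepsilon_{ijk}=\bigl(\kappa_i^\varepsilon\kappa_j^\varepsilon\kappa_k^\varepsilon/(\kappa_i\kappa_j\kappa_k)\bigr)^{1/2}\nabla_ih_{jk}$, then letting $\varepsilon\to0$. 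This choice keeps full symmetry, the vanishing components and the constraint; perturbing $\kappa$ alone would break the constraint.

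As it stands, your proposal proves the reduction to $Q[\psi]\geq0$ but not that inequality itself.
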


\begin{proof}
	We work at the contact point throughout. Equation~\eqref{Codazzi}
	makes $\nabla_mh_{ij}$ symmetric in all three indices, and
	\eqref{eq:support-gradient-multiplicity} gives
	\begin{equation}\label{critical-gradient}
		\kappa_2\nabla_mh_{11}+\kappa_1\nabla_mh_{22}=0,
		\qquad m=1,\ldots,n.
	\end{equation}
	Set $\psi=-F^{-1}$ as in \eqref{psi-definition}. Since
	$G=F-\phi(t)$ and $\phi(t)$ is spatially constant, the chain
	rule gives
	\begin{equation}\label{gradient-transfer}
		Q[G]=Q[F]=F^2Q[\psi]
		+\frac2F\left(\kappa_2(\nabla_1F)^2
		+\kappa_1(\nabla_2F)^2\right).
	\end{equation}
	It suffices to prove that $Q[\psi]\ge0$.

    First suppose that the principal curvatures are distinct.
Applying \eqref{matrix-Hessian} with
$\xi_{ij}=\nabla_mh_{ij}$ for $m=1,2$, we expand
\eqref{eq:third-order-form} as
\begin{equation}\label{eq:third-order-expanded}
\begin{aligned}
 Q[\psi]={}&
 \kappa_2\sum_{i,j=1}^n
       \ddot\psi^{ij}\nabla_1h_{ii}\nabla_1h_{jj}
 +\kappa_1\sum_{i,j=1}^n
       \ddot\psi^{ij}\nabla_2h_{ii}\nabla_2h_{jj}\\
 &+2\kappa_2\sum_{i<j}
       \frac{\dot\psi^i-\dot\psi^j}{\kappa_i-\kappa_j}
       (\nabla_1h_{ij})^2
 +2\kappa_1\sum_{i<j}
       \frac{\dot\psi^i-\dot\psi^j}{\kappa_i-\kappa_j}
       (\nabla_2h_{ij})^2\\
 &-2\sum_{m=1}^n\dot\psi^m
       \nabla_mh_{11}\nabla_mh_{22}
 +2\sum_{m=1}^n\dot\psi^m(\nabla_mh_{12})^2\\
 &+2\kappa_2\sum_{m=1}^n\sum_{p=3}^n
       \frac{\dot\psi^m}{\kappa_p-\kappa_1}
       (\nabla_mh_{1p})^2\\
 &+2\kappa_1\sum_{m=1}^n\sum_{p=3}^n
       \frac{\dot\psi^m}{\kappa_p-\kappa_2}
       (\nabla_mh_{2p})^2.
\end{aligned}
\end{equation}
Here $\ddot\psi^{ij}$ denotes differentiation in the
eigenvalue variables. We group the terms according to the
independent components of $\nabla h$, placing all Codazzi
permutations of each component in the same group, and write
\begin{equation}\label{eq:third-order-splitting}
 Q[\psi]=\mathcal I_1+\mathcal I_2+\mathcal I_3.
\end{equation}

The first group consists of all terms involving
$\nabla_1h_{ii}$. It contains the first diagonal Hessian
sum in \eqref{eq:third-order-expanded}. By \eqref{Codazzi},
\begin{equation*}
 \nabla_2h_{12}=\nabla_1h_{22},
 \qquad
 \nabla_ph_{1p}=\nabla_1h_{pp},
 \quad p>2.
\end{equation*}
Thus the $(1,2)$ entry in the second off-diagonal Hessian
sum and the square with $m=2$ contribute
\begin{equation*}
 2\left(
 \kappa_1\frac{\dot\psi^1-\dot\psi^2}{\kappa_1-\kappa_2}
 +\dot\psi^2
 \right)(\nabla_1h_{22})^2
 =
 2\frac{\kappa_1\dot\psi^1-\kappa_2\dot\psi^2}
          {\kappa_1-\kappa_2}
       (\nabla_1h_{22})^2.
\end{equation*}
The product with $m=1$ and the rotations with $m=p>2$
give the remaining terms. Hence
\begin{equation}\label{eq:I1-expansion}
\begin{aligned}
 \mathcal I_1={}&
 \kappa_2\sum_{i,j=1}^n
       \ddot\psi^{ij}\nabla_1h_{ii}\nabla_1h_{jj}+2\frac{\kappa_1\dot\psi^1-\kappa_2\dot\psi^2}
             {\kappa_1-\kappa_2}
       (\nabla_1h_{22})^2\\
       &-2\dot\psi^1\nabla_1h_{11}\nabla_1h_{22}+2\kappa_2\sum_{p=3}^n
       \frac{\dot\psi^p}{\kappa_p-\kappa_1}
       (\nabla_1h_{pp})^2.
\end{aligned}
\end{equation}

The second group consists of all terms involving
$\nabla_2h_{ii}$. Besides the second diagonal Hessian sum,
it contains the $(1,2)$ entry in the first off-diagonal
Hessian sum, the square with $m=1$, the product with $m=2$,
and the corresponding rotations with $m=p>2$. Using
\begin{equation*}
 \nabla_1h_{12}=\nabla_2h_{11},
 \qquad
 \nabla_ph_{2p}=\nabla_2h_{pp},
 \quad p>2,
\end{equation*}
and
\begin{equation*}
 \kappa_2\frac{\dot\psi^1-\dot\psi^2}{\kappa_1-\kappa_2}
 +\dot\psi^1
 =
 \frac{\kappa_1\dot\psi^1-\kappa_2\dot\psi^2}
      {\kappa_1-\kappa_2},
\end{equation*}
we obtain
\begin{equation}\label{eq:I2-expansion}
\begin{aligned}
 \mathcal I_2={}&
 \kappa_1\sum_{i,j=1}^n
       \ddot\psi^{ij}\nabla_2h_{ii}\nabla_2h_{jj}+2\frac{\kappa_1\dot\psi^1-\kappa_2\dot\psi^2}
             {\kappa_1-\kappa_2}
       (\nabla_2h_{11})^2\\
        &-2\dot\psi^2\nabla_2h_{11}\nabla_2h_{22}+2\kappa_1\sum_{p=3}^n
       \frac{\dot\psi^p}{\kappa_p-\kappa_2}
       (\nabla_2h_{pp})^2.
\end{aligned}
\end{equation}

The remaining components are
\begin{equation*}
 \nabla_ph_{11},\quad
 \nabla_ph_{22},\quad
 \nabla_ph_{12}
 \quad(p>2),
 \qquad
 \nabla_ph_{1r},\quad
 \nabla_ph_{2r}
 \quad(3\leq p<r\leq n).
\end{equation*}
They form $\mathcal I_3$. For $i=1,2$ and $p>2$, \eqref{Codazzi}
gives $\nabla_ih_{ip}=\nabla_ph_{ii}$. The corresponding
off-diagonal Hessian term combines with the rotation having
$m=i$, using
\begin{equation*}
 \frac{\dot\psi^i-\dot\psi^p}{\kappa_i-\kappa_p}
 +\frac{\dot\psi^i}{\kappa_p-\kappa_i}
 =
 \frac{\dot\psi^p}{\kappa_p-\kappa_i}.
\end{equation*}
Together with the product having $m=p$, these give the
first sum in \eqref{eq:I3-expansion} below.

For the component
$\nabla_ph_{12}=\nabla_1h_{2p}=\nabla_2h_{1p}$, the two
off-diagonal Hessian terms, the square with $m=p$, and the
rotations with $(m,i)=(1,2)$ and $(2,1)$ give a coefficient
equal to twice
\begin{equation*}
\begin{aligned}
 &\kappa_2
       \frac{\dot\psi^2-\dot\psi^p}{\kappa_2-\kappa_p}
 +\kappa_1
       \frac{\dot\psi^1-\dot\psi^p}{\kappa_1-\kappa_p}
 +\dot\psi^p
 +\frac{\kappa_1\dot\psi^1}{\kappa_p-\kappa_2}
 +\frac{\kappa_2\dot\psi^2}{\kappa_p-\kappa_1}\\
 &\qquad=
 \frac{
   (\kappa_2-\kappa_1)
      (\kappa_1\dot\psi^1-\kappa_2\dot\psi^2)
   +(\kappa_p^2-\kappa_1\kappa_2)\dot\psi^p
      }
      {(\kappa_p-\kappa_1)(\kappa_p-\kappa_2)}.
\end{aligned}
\end{equation*}
Finally, for $i=1,2$ and $3\leq p<r\leq n$, use
\begin{equation*}
 \nabla_ih_{pr}=\nabla_ph_{ir}=\nabla_rh_{ip}.
\end{equation*}
The $(p,r)$ entry in the Hessian sum in direction $i$
and the two rotations with derivative directions $p$ and
$r$ give the last sum below. Altogether,
\begin{equation}\label{eq:I3-expansion}
\begin{aligned}
 \mathcal I_3={}&
 2\sum_{p=3}^n\dot\psi^p
 \left[
   \frac{\kappa_2}{\kappa_p-\kappa_1}(\nabla_ph_{11})^2
  +\frac{\kappa_1}{\kappa_p-\kappa_2}(\nabla_ph_{22})^2
  -\nabla_ph_{11}\nabla_ph_{22}
 \right]\\
 &+2\sum_{p=3}^n
 \frac{
   (\kappa_2-\kappa_1)
      (\kappa_1\dot\psi^1-\kappa_2\dot\psi^2)
   +(\kappa_p^2-\kappa_1\kappa_2)\dot\psi^p
      }
      {(\kappa_p-\kappa_1)(\kappa_p-\kappa_2)}
       (\nabla_ph_{12})^2\\
 &+2\sum_{i=1}^2\sum_{3\leq p<r\leq n}
       \frac{\kappa_1\kappa_2}{\kappa_i}
 \left[
   \frac{\dot\psi^p-\dot\psi^r}{\kappa_p-\kappa_r}
  +\frac{\dot\psi^p}{\kappa_r-\kappa_i}
  +\frac{\dot\psi^r}{\kappa_p-\kappa_i}
 \right](\nabla_ph_{ir})^2.
\end{aligned}
\end{equation}
No component with all three indices greater than two occurs
in \eqref{eq:third-order-expanded}. Thus these three groups
account for every term, and we estimate them in this order.

	\medskip
\noindent\textit{The group $\mathcal I_1$.}
	For a vector $v\in\mathbb R^n$ satisfying
	$\kappa_2v_1+\kappa_1v_2=0$, define
	\begin{equation}\label{eq:diagonal-quadratic-form}
		\begin{aligned}
			\mathcal B(v)={}&
			\sum_{i,j}\ddot\psi^{ij}v_iv_j
			+2\frac{\kappa_1^2\dot\psi^1-\kappa_2^2\dot\psi^2}
			{\kappa_1-\kappa_2}
			\left(\frac{v_1}{\kappa_1}\right)^2+2\sum_{p>2}\frac{\dot\psi^p}{\kappa_p-\kappa_1}v_p^2.
		\end{aligned}
	\end{equation}
 By \eqref{eq:I1-expansion} and \eqref{critical-gradient},
 \begin{equation}\label{eq:first-diagonal-group}
  \mathcal I_1=\kappa_2\mathcal B
       (\nabla_1h_{11},\ldots,\nabla_1h_{nn}).
 \end{equation}

	To show that $\mathcal B(v)\ge0$, set
	\begin{equation*}
		r=n-\ell,\qquad s=n-k,\qquad
		\lambda_i=\kappa_i^{-1},\qquad
		\xi_i=-\frac{v_i}{\kappa_i^2}.
	\end{equation*}
	Then $\lambda_1>\lambda_2\ge\cdots\ge\lambda_n>0$ and
	$\lambda_2\xi_1+\lambda_1\xi_2=0$. With $q_{r,s}=(\frac{\sigma_r}{\sigma_s}
    )^{\frac{1}{r-s}}$ as in
	Proposition~\ref{key-lemma}, \eqref{dual-quotient} gives
	\begin{equation*}
		F_*(\lambda)=\left(\frac{\binom ns}{\binom nr}\right)^{1/(r-s)}q_{r,s}(\lambda),
		\qquad \psi(\kappa)=-F_*(\lambda).
	\end{equation*}
	Consequently,
	\begin{equation*}
		\dot\psi^i=\frac{\partial_iF_*}{\kappa_i^2},
		\qquad
		\ddot\psi^{ij}
		=-\frac{\partial_{ij}F_*}{\kappa_i^2\kappa_j^2}
		-\frac{2\partial_iF_*}{\kappa_i^3}\delta_{ij}.
	\end{equation*}
	Substitution into \eqref{eq:diagonal-quadratic-form}, using
	the relation between $\xi_1$ and $\xi_2$, gives
	\begin{equation}\label{eq:diagonal-paired-form}
		\mathcal B(v)
		=-\partial_\xi^2F_*
		+2\sum_{i=2}^n\frac{\partial_iF_*}{\lambda_1-\lambda_i}\xi_i^2
		-\frac{2\partial_1F_*}{\lambda_1-\lambda_2}\xi_1^2
		\ge0.
	\end{equation}
	The last inequality follows from \eqref{paired-Jacobi} in
	Proposition~\ref{key-lemma}, since
        $1\leq s<r\leq n-1$ and $r+s=2n-k-\ell\geq n$. This proves
	$\mathcal I_1\ge0$.

	\medskip
\noindent\textit{The group $\mathcal I_2$.}
 By \eqref{critical-gradient},
 $\nabla_2h_{22}/\kappa_2=-\nabla_2h_{11}/\kappa_1$.
 Substituting this relation in \eqref{eq:I2-expansion} gives
	\begin{equation}\label{eq:second-diagonal-comparison}
		\begin{aligned}
			\frac{\mathcal I_2}{\kappa_1}
			={}&\mathcal B(\nabla_2h_{11},\ldots,\nabla_2h_{nn})+2\sum_{p>2}\dot\psi^p
			\left(\frac1{\kappa_p-\kappa_2}
			-\frac1{\kappa_p-\kappa_1}\right)
			(\nabla_2h_{pp})^2
			\ge0.
		\end{aligned}
	\end{equation}
	The first term is nonnegative by \eqref{eq:diagonal-paired-form},
	and every coefficient in the remaining sum is nonnegative.

	\medskip
\noindent\textit{The group $\mathcal I_3$.}
 The first sum in \eqref{eq:I3-expansion} is nonnegative, since
 \eqref{critical-gradient} gives
 $(\nabla_ph_{11})(\nabla_ph_{22})\leq0$ for $p>2$.
 For the second sum, \eqref{psi-properties} implies
 \begin{equation*}
  \dot\psi^1\geq\dot\psi^2\geq\dot\psi^p>0,
  \qquad \kappa_p^2\dot\psi^p\geq\kappa_2^2\dot\psi^2.
 \end{equation*}
 Hence its numerator satisfies
 \begin{equation}\label{eq:remaining-mixed-coefficient}
 \begin{aligned}
 & (\kappa_2-\kappa_1)
       (\kappa_1\dot\psi^1-\kappa_2\dot\psi^2)
       +(\kappa_p^2-\kappa_1\kappa_2)\dot\psi^p\\
 \geq&
  -(\kappa_2-\kappa_1)^2\dot\psi^2
       +(\kappa_2^2-\kappa_1\kappa_2)\dot\psi^2\\
  =&\kappa_1(\kappa_2-\kappa_1)\dot\psi^2\geq0.
 \end{aligned}
 \end{equation}
 For the last sum, the same identities give
 \begin{equation*}
 \begin{aligned}
 &\frac{\dot\psi^p-\dot\psi^r}{\kappa_p-\kappa_r}
   +\frac{\dot\psi^p}{\kappa_r-\kappa_i}
   +\frac{\dot\psi^r}{\kappa_p-\kappa_i}\geq
 \frac{\dot\psi^p-\dot\psi^r}{\kappa_p-\kappa_r}
   +\frac{\dot\psi^p}{\kappa_r}
   +\frac{\dot\psi^r}{\kappa_p}\geq0,
 \qquad i=1,2,\quad 3\leq p<r\leq n.
 \end{aligned}
 \end{equation*}
 Thus $\mathcal I_3\geq0$.

It remains to consider repeated principal curvatures.
Set $T_{ijk}=\nabla_i h_{jk}$ and, for $\varepsilon>0$, define
\begin{equation*}
 \kappa_i^\varepsilon=\kappa_i+i\varepsilon,\qquad
 T_{ijk}^\varepsilon
 =\left(
   \frac{\kappa_i^\varepsilon\kappa_j^\varepsilon
         \kappa_k^\varepsilon}
        {\kappa_i\kappa_j\kappa_k}
   \right)^{1/2}T_{ijk},
\end{equation*}
with no summation in this definition. By \eqref{Codazzi},
$T^\varepsilon$ is fully symmetric. Its zero components are
preserved, and $(\kappa^\varepsilon,T^\varepsilon)\to(\kappa,T)$.
Moreover, \eqref{critical-gradient} gives
\begin{equation*}
 \kappa_2^\varepsilon T_{m11}^\varepsilon
 +\kappa_1^\varepsilon T_{m22}^\varepsilon=\left(\frac{\kappa_m^\varepsilon}{\kappa_m}\right)^{1/2}
 \frac{\kappa_1^\varepsilon\kappa_2^\varepsilon}
      {\kappa_1\kappa_2}
 \bigl(\kappa_2T_{m11}+\kappa_1T_{m22}\bigr)=0.
\end{equation*}
Thus the preceding calculation applies to these data and gives
$Q^\varepsilon[\psi]\geq0$, where $Q^\varepsilon$ denotes
\eqref{eq:third-order-form} evaluated at $(\kappa^\varepsilon,T^\varepsilon)$.

For $i=1,2$, $p>2$, and every $m$, \eqref{eq:support-zero-gap}
implies
\begin{equation*}
 \frac{(T_{mip}^\varepsilon)^2}
      {\kappa_p^\varepsilon-\kappa_i^\varepsilon}
 \longrightarrow
 \begin{cases}
  \displaystyle\frac{T_{mip}^2}{\kappa_p-\kappa_i},
       &p\in\mathcal J_i,\\
  \quad\ 0\quad\ \ ,   &\kappa_p=\kappa_i.
 \end{cases}
\end{equation*}
In the second case the numerator vanishes for every $\varepsilon$.
The matrix derivatives of $\psi$ are smooth on the positive cone,
so all remaining terms converge. Hence
\begin{equation*}
 Q[\psi]=\lim_{\varepsilon\to0^+}Q^\varepsilon[\psi]\geq0.
\end{equation*}
Combining this with \eqref{gradient-transfer} proves
\eqref{gradient-nonnegative}. Equation~\eqref{eq:rotating-inequality}
then gives \eqref{eq:support-reaction-lower}.
\end{proof}

\begin{proof}[Proof of Theorem~\ref{preservation}]
	Euler's identity $F=\sum_i\dot F^i\kappa_i$ allows us to
	rewrite \eqref{sectional-reaction-original} as
	\begin{equation}\label{sectional-reaction}
		\begin{aligned}
			\mathcal R={}&
			2\sum_i\dot F^i(\kappa_i-\kappa_1)(\kappa_i-\kappa_2)+S\left[2\dot F^{ij}(h^2)_{ij}
			-\phi(\kappa_1+\kappa_2)\right].
		\end{aligned}
	\end{equation}
	The first sum is nonnegative. Fix $\tau<T$. Smoothness and
	compactness give a finite constant
	\begin{equation*}
		C_\tau
		=\max_{M\times[0,\tau]}\phi(\kappa_1+\kappa_2).
	\end{equation*}
	Since $\dot F^{ij}(h^2)_{ij}>0$, it follows that
	\begin{equation}\label{eq:reaction-linear-lower}
		\mathcal R\ge-C_\tau S
		\qquad\text{whenever }S\ge0.
	\end{equation}

Choose $0<\varepsilon_0<\min_M S(\cdot,0)$ and set
\begin{equation*}
b(t)=\varepsilon_0e^{-(C_\tau+1)t}.
\end{equation*}
Suppose that $S$ first touches $b$ at
$(x_0,t_0)$, where $0<t_0\leq\tau$.
Then $\varphi(x,t)=b(t)$ is a smooth lower
support for $S$ with $\nabla\varphi=0$.
Lemmas~\ref{lem:rotating} and
\ref{lem:third-order-nonnegative}, together with
\eqref{eq:reaction-linear-lower}, give
\begin{equation*}
-(C_\tau+1)b(t_0)
=(\partial_t-\mathcal L)\varphi(x_0,t_0)
\geq\mathcal R(x_0,t_0)
\geq-C_\tau b(t_0),
\end{equation*}
a contradiction.
Thus $S\geq b(t)>0$ on $M\times[0,\tau]$.
Since $\tau<T$ is arbitrary, the theorem follows.
\end{proof}

\begin{remark}\label{spherical-flow-remark}
For $1\leq\ell<k$ with $k+\ell\leq n$, let $F$ be
defined by \eqref{normalization}. Consider the flow
\begin{equation*}
 \partial_tX=\left(\frac{a(t)}F-F\right)\nu,
 \qquad
 a(t)=
 \frac{\displaystyle\int_{M_t}E_\ell F\,d\mu_t}
      {\displaystyle\int_{M_t}E_\ell/F\,d\mu_t}.
\end{equation*}
For $k=\ell+1$, this is the hyperbolic analogue of the
flow studied in \cite{Spherical}. The choice of $a(t)$
preserves $W_\ell$ and makes $W_k$ nonincreasing,
by \eqref{W-variation-intro}. Positive sectional curvature is also preserved along
smooth strictly convex solutions. Indeed, at a lower-support
contact with $\nabla\varphi=0$ and $S\geq0$, the proof of
Lemma~\ref{lem:third-order-nonnegative} gives
$Q[F-a/F]=Q[F]+aQ[-F^{-1}]\geq0$.
Writing
$\widehat{\mathcal L}
=(1+a/F^2)\dot F^{ij}\nabla_i\nabla_j$,
the reaction terms yield
\begin{equation*}
 (\partial_t-\widehat{\mathcal L})\varphi
 \geq-\frac{2a}{F}(\kappa_1+\kappa_2)S.
\end{equation*}
The coefficient on the right is bounded on each compact
smooth time interval, so the barrier argument in the proof
of Theorem~\ref{preservation} applies.
\end{remark}

\section{Long-time existence and convergence}\label{sec-proof}
Let $[0,T_{\max})$ be the maximal interval of the solution starting
from $M_0$ as in Theorem~\ref{main-flow}.
Theorem~\ref{preservation} gives positive sectional curvature on
this interval. We use \eqref{W-monotonicity} to obtain the geometric
estimates, long-time existence, and convergence.

\subsection{Shape estimate}\label{subsec-shape}
For a bounded convex domain $\Omega$, let $\rho_-(\Omega)$ and $\rho_+(\Omega)$ denote its inner and outer radii. We first obtain a fixed outer ball, using the same reflection principle as \cite{ACW}*{Section 4.1}. The preserved quermassintegral replaces the enclosed volume in the containment argument.

\begin{lemma}\label{outer-ball}
There are a fixed point $p\in\mathbb H^{n+1}$ and a constant
$R_0$, depending only on the initial domain and $\ell$, such that
\begin{equation*}
 \Omega_t\subset B_{R_0}(p)
\end{equation*}
for every $t\in[0,T_{\max})$.
\end{lemma}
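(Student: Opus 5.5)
The plan follows the Alexandrov reflection argument of \cite{ACW}*{Section~4.1}, in the form used by Chow--Gulliver for Euclidean flows. Fix a totally geodesic hyperplane $P\subset\mathbb H^{n+1}$ with reflection $R_P$, and let $P_s$ ($s\in\mathbb R$) be the family of hyperplanes orthogonal to a fixed geodesic $\gamma$, with reflections $R_s$. The first step is a reflection comparison principle: if at $t=0$ the reflected part $R_P(\Omega_0\cap H^+)$ lies in $\Omega_0$, where $H^+$ is one side of $P$, then the same holds for $\Omega_t$ for every $t<T_{\max}$. The speed $\phi(t)-F$ is a function of $t$ and the Weingarten map only, and $R_P$ is an isometry. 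Hence $R_P(M_t)$ solves the same equation with the \emph{same} global term $\phi(t)$. The strong maximum principle for the difference of two graphs over $P$, where the equation is uniformly parabolic because $M_t$ is strictly convex, then shows that the reflected cap cannot touch $M_t$ from inside at a first time. Contact on $P$ itself is excluded by the Hopf boundary lemma, as in the Euclidean case. The global term causes no difficulty, because it is the same number for both hypersurfaces.

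The second step turns this into a containment statement, as in \cite{ACW}. Take $p$ to be any fixed interior point of $\Omega_0$. Choose $R_1$ with $\Omega_0\subset B_{R_1}(p)$. For each geodesic $\gamma$ through $p$, every hyperplane $P_s$ orthogonal to $\gamma$ at distance $s\geq R_1$ from $p$ misses $\Omega_0$. The reflection hypothesis then holds trivially at $t=0$ for the side of $P_s$ away from $p$, so it is preserved for all $t$. It follows that, for each unit direction $\theta$ at $p$ and each $t$, the part of $\Omega_t$ beyond $P_{R_1}(\theta)$ reflects into $\Omega_t$. This is the standard ``cone of reflection'' conclusion. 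Concretely, if a point $x\in\Omega_t$ is far from $p$, with $d(x,p)=D$, then reflecting $x$ across all admissible hyperplanes shows that $\Omega_t$ contains a large region. This region contains a geodesic ball, or at least a convex set whose size grows with $D$: roughly, reflections across hyperplanes through $B_{R_1}(p)$ that separate $x$ from $p$ sweep out a set containing a ball of radius comparable to $D-R_1$ around points near the geodesic from $p$ to $x$.

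The third step uses the preserved quantity to bound $D$; this is where $W_\ell$ replaces the volume. By inclusion monotonicity of $W_\ell$ and \eqref{W-monotonicity}, if $\Omega_t\supset B_\rho(q)$ then
\[
f_\ell(\rho)\leq W_\ell(\Omega_t)=W_\ell(\Omega_0).
\]
Since $f_\ell$ increases to $\infty$ by \eqref{ball-functions}, this bounds $\rho$ uniformly in terms of $W_\ell(\Omega_0)$. The bound on $\rho$ then gives a bound on $D$, hence $R_0$, depending only on $R_1$, $\ell$, and $W_\ell(\Omega_0)$. If the swept set is only a convex set rather than a ball, the same conclusion follows: convexity of $\Omega_t$ together with $x$ and the reflected copy of the inner part give a segment of length roughly $2(D-R_1)$. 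A long convex set containing a fixed small ball near $p$ has $W_\ell\geq W_1$-type growth. Alternatively, the bound on $\rho$ can be used directly through the reflected images.

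The main obstacle is this last geometric step. We need an inner ball of definite size, or some other quantity controlled by $W_\ell$, to grow with $D$. The set $\Omega_t$ itself may be thin, and unlike volume, $W_\ell$ for $\ell\geq1$ does not detect the inradius. I would first try to show, via reflection, that $\Omega_t$ contains a fixed ball $B_{\delta}(p)$ for some $\delta>0$. Together with a point at distance $D$, convexity then gives the convex hull of $B_\delta(p)$ and $x$, a cone whose $W_\ell$ tends to $\infty$ as $D\to\infty$, since $W_\ell\geq cW_1$-type perimeter growth holds for hyperbolic cones of fixed aperture base. Monotonicity of $W_\ell$ then bounds $D$.
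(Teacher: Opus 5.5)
Your architecture matches the paper's. The reflection principle is unaffected by $\phi(t)$, since both $M_t$ and its reflection see the same number. The hyperplanes are taken to miss a ball $B_{R_1}(p)\supset\Omega_0$; they must be disjoint from $\overline{B_{R_1}(p)}$, not ``through'' it, or the reflection hypothesis fails at $t=0$. The preserved $W_\ell$ enters through inclusion monotonicity. The gap is the decisive geometric step: that a point of $\Omega_t$ far from $p$ forces a large ball into $\Omega_t$. You assert it only ``roughly'' and then flag it yourself as the unresolved obstacle. The worry you give is misplaced: Step~3 never needs $W_\ell$ to bound the inradius from below. It only needs the upper bound $f_\ell(\rho)\le W_\ell(\Omega_0)$ for an inscribed ball, which you already wrote down. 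Your fallback would fail. In $\mathbb H^{n+1}$ the geodesics from $B_\delta(p)$ to a far point $x$ converge exponentially, so the convex hull of $B_\delta(p)\cup\{x\}$ is a spike whose cross-sections at distance $s$ from $p$ have radius of order $\sinh\delta\,e^{-s}$. Its $W_\ell$ therefore stays bounded as $d(p,x)\to\infty$ for every $\ell\le n-1$; only $W_n$ grows with length, and here $\ell\le n-2$. Reflection also gives no ball of fixed size about $p$ unless a far point is already present.

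What is missing is an explicit covering computation. Let $x\in\Omega_t$ with $d(p,x)=D$, and let $y$ satisfy $d(p,y)=\rho<D$. Let $H$ be the perpendicular bisector of $x$ and $y$, i.e.\ the hyperplane whose reflection exchanges them; in the hyperboloid model it is cut out by the Minkowski product with $x-y$. A short computation gives
\[
 \sinh d(p,H)=\frac{\cosh D-\cosh\rho}{2\sinh\frac{d(x,y)}{2}}
 \ge\frac{\cosh D-\cosh\rho}{2\sinh\frac{D+\rho}{2}}
 =\sinh\frac{D-\rho}{2},
\]
with $p$ and $y$ on the same side of $H$. So if $\rho<D-2R_1$, then $H$ misses $\overline{B_{R_1}(p)}$, your Step~1 applies to $H$, and $y\in\overline\Omega_t$. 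Hence $B_{D-2R_1}(p)\subset\Omega_t$, and Step~3 gives $D\le 2R_1+f_\ell^{-1}(W_\ell(\Omega_0))$, an explicit $R_0$. The paper organizes this differently. It picks $R$ with $W_\ell(B_R(p))>W_\ell(\Omega_0)$ and uses $W_\ell$ only once, to show that $M_t$ must meet $B_R(p)$ (otherwise $B_R(p)\subset\Omega_t$). The Andrews--Chen--Wei reflection argument, resting on the same covering fact, then excludes far points, because their reflections would swallow that boundary point into the interior. Either way, this covering step is the heart of the lemma, and your proposal does not supply it.
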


\begin{proof}
We use the Alexandrov reflection argument in the proof of
\cite{ACW}*{Lemma~4.1}. Choose $R>0$ such that
\begin{equation*}
 \overline\Omega_0\subset B_R(p),
 \qquad
 W_\ell(\Omega_0)<W_\ell(B_R(p)).
\end{equation*}
Since the global term $\phi(t)$ is spatially constant and invariant
under hyperbolic reflections, the reflection argument of
\cite{ACW}*{Lemma~4.1} applies without change.

It follows first that $\Omega_t$ meets $B_R(p)$ for every
$t<T_{\max}$. Moreover, $M_t$ must meet $B_R(p)$. Otherwise
$B_R(p)\subset\Omega_t$, and the inclusion monotonicity and
preservation of $W_\ell$ would give
\begin{equation*}
 W_\ell(B_R(p))
 \leq W_\ell(\Omega_t)
 =W_\ell(\Omega_0),
\end{equation*}
a contradiction. The remaining reflection argument in
\cite{ACW}*{Lemma~4.1} gives a uniform diameter bound, and hence
\begin{equation*}
 \Omega_t\subset B_{R_0}(p)
\end{equation*}
for a constant $R_0$ depending only on $\Omega_0$ and $\ell$.
\end{proof}

The following noncollapsing estimate extends the corresponding
argument in \cite{ACW}*{Lemma~4.1}, where preservation of the enclosed volume
is used to obtain an inradius lower bound. Here we replace volume
preservation by a positive lower bound for a general
quermassintegral $W_j$, using in addition the nonnegative
sectional curvature condition.

\begin{lemma}\label{noncollapsing}
Let $n\geq2$, let $D,A>0$, and let $j\in\{0,\ldots,n-1\}$.
There exists a constant
\begin{equation*}
 r_0=r_0(n,D,A,j)>0
\end{equation*}
with the following property. Suppose that $\Omega\subset\mathbb H^{n+1}$
is a bounded convex domain with smooth closed boundary $M=\partial\Omega$.
Let $\kappa_1,\ldots,\kappa_n$ be the principal curvatures of $M$ with
respect to the outward unit normal. Assume that
\begin{equation*}
 \operatorname{diam}_{\mathbb H^{n+1}}\Omega\leq D,
 \qquad W_j(\Omega)\geq A,
\end{equation*}
and that $M$ has nonnegative intrinsic sectional curvature, namely,
\begin{equation*}
 \kappa_i\kappa_\ell\geq1
 \qquad\text{on }M\quad\text{for all }i\ne\ell.
\end{equation*}
Then the inner radius
\begin{equation*}
 \rho_-(\Omega)
 =\sup\{r>0\mid B_r(p)\subset\Omega
                \text{ for some }p\in\Omega\}
\end{equation*}
satisfies
\begin{equation*}
 \rho_-(\Omega)>r_0.
\end{equation*}
In particular, $\Omega$ contains a geodesic ball of radius $r_0$.
\end{lemma}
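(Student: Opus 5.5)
Argue by contradiction and compactness. Suppose there is a sequence $\Omega_m$ satisfying the hypotheses with $\rho_-(\Omega_m)\to0$. After a hyperbolic isometry, each $\Omega_m$ contains a fixed point $o$. By the diameter bound, $\Omega_m\subset B_D(o)$. Pass to the Klein model centred at $o$. The Klein images $K_m$ are then Euclidean convex bodies in a fixed ball $B_{\tanh D}(0)$, and on this ball the Klein map is bi-Lipschitz with constants depending only on $D$. Hence $\rho_-(\Omega_m)\to0$ if and only if the Euclidean inradius of $K_m$ tends to zero. By Blaschke selection we may assume $K_m\to K_\infty$ in the Hausdorff sense, and continuity of $W_j$ under Hausdorff convergence gives $W_j(\Omega_\infty)\ge A>0$. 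The aim is to show $\dim K_\infty\le1$. Then $\Omega_\infty$ is a geodesic segment or a point, and $W_j(\Omega_\infty)=0$ for $j\le n-1$, a contradiction. To see this vanishing, note that for $j\le n-1$ the integral-geometric definition of $W_j$ counts totally geodesic planes of dimension at least one in a way that gives zero measure to segments. Equivalently, approximate the segment by thin tubes: by \eqref{W-definition}, $W_j$ of a thin tube around a segment tends to zero for $j\le n-1$, and $W_j$ is monotone under inclusion.

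The geometric core is the semiaxis comparison. Let $\mathcal E_m$ be the John ellipsoid of $K_m$, with semiaxes $a_1^m\le a_2^m\le\cdots\le a_{n+1}^m$, so that $\mathcal E_m\subset K_m\subset(n+1)\mathcal E_m$ about its centre. Inradius tending to zero means $a_1^m\to0$. I want to show $a_n^m\to0$ as well, because then $K_\infty$ lies in a line. Suppose instead that $a_n^m\ge\delta>0$ along a subsequence. Take the three-dimensional slice of $K_m$ through the centre of $\mathcal E_m$ spanned by the axes $a_1,a_n,a_{n+1}$. In the limit this slice becomes a flat two-dimensional convex set $P$ of size at least about $\delta$. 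Near a boundary point of $P$ where $P$ has a definite curvature-free edge, the hypersurface $M_m$ must fold sharply across the thin direction. Concretely, at a point of $M_m$ farthest in the $a_n$-direction within the slice, the normal curvature in the thin direction blows up like $1/a_1^m$, while the normal curvature along the flat direction tangent to $P$ is bounded by a quantity tending to zero. Nonnegative sectional curvature says $\kappa_i\kappa_\ell\ge1$ for all $i\ne\ell$. Via the Gauss equation this involves only principal curvatures, so the real constraint is that at most one principal curvature can be less than $1$. In the thin plate, at least two tangent directions, one along $P$ in the $a_n$-direction and one in the $a_{n+1}$-direction, are nearly flat. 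One therefore expects a point with two principal curvatures both small, which contradicts $\kappa_i\kappa_\ell\ge1$.

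The main obstacle is making the last step rigorous. The pair of small curvatures must occur at a \emph{single} point, and the relevant curvatures are of the hypersurface $M$, not of the slice. My plan for this is as follows. Look at the two-dimensional limit convex set $P$ in the $(a_n,a_{n+1})$-plane. Pick a point on $\partial P$ and the corresponding supporting plane. By convexity of $K_m$, the region of $M_m$ lying over a small neighbourhood of that boundary point is a two-parameter family: it is a graph over a $2$-disc of radius about $\delta$, with height varying by $O(a_1^m)$, so its average second fundamental form in those two directions is $O(a_1^m/\delta^2)$.

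To turn this average into a pointwise statement, use the hyperbolic--Euclidean comparison \eqref{Klein-second-form-comparison} with bounded factors. Then apply an averaging or Gauss-map area argument: the integral of $\det$ of the $2\times2$ restricted second fundamental form over the disc is bounded by the area of its Gauss image, which is small. This forces a point where the product of the two smallest principal curvatures is less than $1/2$. That contradicts the hypothesis $\kappa_1\kappa_2\ge1$, since the restricted $2\times2$ determinant dominates $\kappa_1\kappa_2$ by the Cauchy--Binet bound from Lemma~\ref{lem:rotating}. The contradiction gives $a_n^m\to0$, and the collapse to dimension at most one finishes the proof.
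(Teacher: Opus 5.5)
Your argument is correct in substance, and its core is the paper's. Both use the Klein model with uniformly comparable fundamental forms, the John ellipsoid, and the three-dimensional section through the shortest and the two longest axes. Both use the Cauchy--Binet bound to show that the Gauss curvature of this section is at least $\widehat\kappa_1\widehat\kappa_2\geq c(D)$.

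The differences lie in the two outer steps. The paper is quantitative. It writes the top of the section as a concave graph $f$ over the disc of radius $b=a_n$ with $0\leq f\leq(n+1)\varepsilon$. It then uses $-\Delta f\geq2\sqrt{c(D)}$ and the paraboloid barrier $q_R$ to get $b^2\leq C\varepsilon$. Finally, $W_j\leq C\,W_j^E(K)$ and the Euclidean Steiner formula for $I+B_s$ give the explicit bound $\varepsilon\geq(A/C)^{2/(n-j)}$.

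You argue by contradiction, via Blaschke selection and continuity of $W_j$, or via monotonicity and thin tubes. You replace the barrier by a Gauss-image area count. That count is valid and gives the same scaling. Concavity together with $0\leq f\leq(n+1)a_1$ on $D_{a_n}$ yields $|Df|\leq2(n+1)a_1/a_n$ on $D_{a_n/2}$. Since $\det(-D^2f)\geq c(D)$, this gives $c(D)\,\pi a_n^2/4\leq\int_{D_{a_n/2}}\det(-D^2f)\,dx=|Df(D_{a_n/2})|\leq4\pi(n+1)^2a_1^2/a_n^2$, that is, $a_n^2\leq C(n,D)\,a_1$. So your core step is an integrated substitute for the barrier and is equally sharp. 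The paper's direct framing buys an explicit $r_0$ and never needs $W_j$ of degenerate Hausdorff limits. Your compactness framing only gives existence of $r_0$, unless you make the area count quantitative as above.

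Two statements in your write-up are wrong, although your final mechanism does not depend on them. First, the rim heuristic in your second paragraph fails. At the point of the section farthest in the $a_n$-direction, the in-plate normal curvature is essentially the curvature of $\partial P$, which need not tend to zero, so nothing degenerate happens there. Second, for the same reason the area count cannot be localized near a point of $\partial P$. There the section folds over and its Gauss image is large. The count must be made on the flat top of the plate, over the disc of radius $a_n$ about the John centre. The margin $D_{a_n/2}\subset D_{a_n}$, together with $0\leq f\leq(n+1)a_1$, is exactly what makes the Gauss image small.
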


\begin{proof}
Fix $p\in\Omega$ and choose the Klein coordinates
\eqref{Klein-parametrization} centred at $p$.
The image $K\subset B_1(0)$ of $\Omega$ is a Euclidean convex body,
and the diameter bound gives
\begin{equation*}
 |Y|\leq\rho=\tanh D<1\quad\text{on }K.
\end{equation*}
Since $s^2\leq|Y|^2$, \eqref{Klein-metric-comparison} and
\eqref{Klein-second-form-comparison} imply
\begin{equation*}
 C^{-1}g_E\leq g_H\leq Cg_E,\qquad
 C^{-1}h_E\leq h_H\leq Ch_E,
\end{equation*}
where $C$ depends only on $n$ and $D$.
Let $0<\widehat\kappa_1\leq\cdots\leq\widehat\kappa_n$
be the Euclidean principal curvatures of $\partial K$.
The min--max principle and $\kappa_1\kappa_2\geq1$ give
\begin{equation}\label{Euclidean-section-bound}
 \widehat\kappa_1\widehat\kappa_2\geq c(D)>0.
\end{equation}

Let $W_j^E$ denote the Euclidean quermassintegrals. For $j\geq1$,
the recurrence \eqref{W-definition}, the positivity of the lower
order quermassintegrals, and
\eqref{Klein-metric-comparison}--\eqref{Klein-second-form-comparison}
give
\begin{equation}\label{W-comparison}
 W_j(\Omega)
 \leq
 \frac1{n+1-j}
 \int_{\partial\Omega}E_{j-1}\,d\mu_H
 \leq
 C(n,D)W_j^E(K).
\end{equation}
For $j=0$, the same estimate follows directly from the uniform
comparison of the hyperbolic and Euclidean volume elements.

By John's ellipsoid theorem \cite{GS}, there is an ellipsoid
$E$, centred at an interior point of $K$, such that
\begin{equation*}
 E\subset K\subset (n+1)E.
\end{equation*}
Let $0<a_1\leq\cdots\leq a_{n+1}$ be its semiaxes, and set $\varepsilon=a_1$, $b=a_n$. The longest semiaxis is uniformly bounded. Intersect $K$ with the three-dimensional affine subspace through the centre of $E$ spanned by the shortest and the two longest axes. The boundary of this section is a smooth strictly convex surface. Its second fundamental form is the restriction of $h_E$, divided by the length of the projection of $\nu_E$ onto the three-dimensional subspace. Hence \eqref{Euclidean-section-bound} implies that its
Gauss curvature is bounded below by a positive constant depending only on $D$.

In the plane spanned by the two longest axes, the projection of
the section contains a disk $D_b$ of radius $b$. The upper part
of its boundary can be written as a concave graph
\begin{equation*}
 z=f(x),
 \qquad x\in D_b,
\end{equation*}
with
\begin{equation*}
 0\leq f\leq (n+1)\varepsilon.
\end{equation*}
Since the Gauss curvature of the graph is bounded below by
$c(D)>0$,
\begin{equation*}
 \frac{\det(-D^2f)}{(1+|Df|^2)^2}\geq c(D).
\end{equation*}
By the arithmetic--geometric mean inequality,
\begin{equation*}
 -\Delta f
 \geq
 2\sqrt{\det(-D^2f)}
 \geq2\sqrt{c(D)}.
\end{equation*}
For $0<R<b$, consider
\begin{equation*}
 q_R(x)
 =
 \frac{\sqrt{c(D)}}2(R^2-|x|^2).
\end{equation*}
Then
\begin{equation*}
 -\Delta q_R=2\sqrt{c(D)},
 \qquad
 q_R=0\quad\text{on }\partial D_R.
\end{equation*}
Since $f\geq0$ on $\partial D_R$, the maximum principle gives
$f\geq q_R$ in $D_R$. In particular,
\begin{equation*}
 (n+1)\varepsilon
 \geq f(0)
 \geq\frac{\sqrt{c(D)}}2R^2.
\end{equation*}
Letting $R\uparrow b$, we obtain
\begin{equation}\label{semiaxis-control}
 b^2\leq C(n,D)\varepsilon.
\end{equation}

It follows from John's inclusion that $K$ is contained in the
$C\sqrt{\varepsilon}$-neighbourhood of a line segment $I$ of
uniformly bounded length. Let $\beta_m$ denote the volume of the
Euclidean unit ball in $\mathbb R^m$. The Steiner formula (see~\cite{Schneider}*{Section~4.2}) gives
\begin{equation*}
 W_j^E(I+B_s)
 =
 \beta_{n+1}s^{n+1-j}
 +
 |I|\beta_n\frac{n+1-j}{n+1}s^{n-j}.
\end{equation*}
Using Euclidean inclusion monotonicity,
\eqref{W-comparison}, and \eqref{semiaxis-control}, we obtain
\begin{equation*}
 A
 \leq W_j(\Omega)
 \leq
 C\left(
 \varepsilon^{(n+1-j)/2}
 +
 \varepsilon^{(n-j)/2}
 \right).
\end{equation*}

Since $0<\varepsilon\leq\rho<1$ and $j\leq n-1$, this gives
\begin{equation*}
 A\leq C\varepsilon^{(n-j)/2},
 \qquad
 \varepsilon\geq\left(\frac AC\right)^{2/(n-j)}>0.
\end{equation*}

If $z_E$ denotes the centre of the John ellipsoid, then $B_\varepsilon^E(z_E)\subset K$. On the fixed Klein ball $B_\rho(0)$ the hyperbolic and Euclidean metrics are uniformly equivalent. Therefore $K$ contains a hyperbolic ball of radius at least $c(D)\varepsilon$. This gives
\begin{equation*}
 \rho_-(\Omega)\geq r_0(n,D,A,j)>0
\end{equation*}
and proves the lemma.
\end{proof}

\begin{remark}
The sectional curvature assumption in
Lemma~\ref{noncollapsing} is essential when
$1\leq j\leq n-1$.
To see this, fix $0<a<1$ and consider the
following ellipsoids in the Klein model:
\begin{equation*}
 E_\varepsilon=
 \left\{
 x\in\mathbb R^{n+1}\ \middle|\
 \frac{x_1^2+\cdots+x_{n+1-j}^2}{a^2}
 +\frac{x_{n+1-j+1}^2+\cdots+x_{n+1}^2}{\varepsilon^2}
 \leq1
 \right\},
\end{equation*}
where $\varepsilon>0$ is small. Let $\Omega_\varepsilon$
denote the corresponding convex domains in $\mathbb H^{n+1}$.
Since the ellipsoids remain in a fixed compact subset of the
Klein ball, their hyperbolic diameters are uniformly bounded,
whereas $\rho_-(\Omega_\varepsilon)\to 0$. On the other hand, $E_\varepsilon$ converges in the Hausdorff topology to a $(n+1-j)$-dimensional disk. By the Crofton description and continuity of the quermassintegrals \cites{Santalo,Solanes2006},
\begin{equation*}
 W_j(\Omega_\varepsilon)\longrightarrow c_{a,j}>0.
\end{equation*}
Thus a uniform diameter bound and a positive lower bound for
$W_j$ do not prevent collapse under convexity alone. The case
$j=0$ is different, since a positive volume lower bound already
excludes such collapse, as in \cite{ACW}*{Lemma~4.1}.
\end{remark}

\begin{corollary}\label{shape-bounds}
Under the assumptions of Theorem~\ref{main-flow}, there exist
constants $r_0,R_0>0$, depending only on $n$, $k$, $\ell$ and $M_0$,
such that
\begin{equation}
 0<r_0\leq\rho_-(\Omega_t)\leq\rho_+(\Omega_t)\leq R_0,
 \qquad
 \Omega_t\subset B_{R_0}(p),
\end{equation}
for every $t\in[0,T_{\max})$, where $p$ is the fixed point
given by Lemma~\ref{outer-ball}.
\end{corollary}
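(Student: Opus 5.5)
The plan is to combine Lemma~\ref{outer-ball} with Lemma~\ref{noncollapsing} applied at each time. First, Lemma~\ref{outer-ball} gives a fixed point $p$ and a radius $R_0$, depending only on $\Omega_0$ and $\ell$, with $\Omega_t\subset B_{R_0}(p)$ for all $t\in[0,T_{\max})$. By definition of the outer radius, this gives $\rho_+(\Omega_t)\leq R_0$. It also gives $\operatorname{diam}_{\mathbb H^{n+1}}\Omega_t\leq D:=2R_0$ uniformly in $t$. The inequality $\rho_-(\Omega_t)\leq\rho_+(\Omega_t)$ is trivial, since an inscribed ball has radius at most that of any enclosing ball.

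For the inner radius, apply Lemma~\ref{noncollapsing} with $j=\ell$, which is allowed because $1\leq\ell\leq n-1$. Take $D=2R_0$ and $A=W_\ell(\Omega_0)>0$; positivity comes from the integral-geometric definition, since $\Omega_0$ has nonempty interior.

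The hypotheses then need checking at each time $t<T_{\max}$:
\begin{itemize}
\item $M_t$ is smooth, closed and strictly convex by the definition of $T_{\max}$, so it bounds a convex domain $\Omega_t$.
\item $W_\ell(\Omega_t)=W_\ell(\Omega_0)=A$ by \eqref{W-monotonicity}.
\item $M_t$ has positive, hence nonnegative, sectional curvature by Theorem~\ref{preservation}. That theorem applies because $\phi(t)>0$: it is a weighted average of $F>0$ with positive weight $E_\ell$ on a strictly convex hypersurface. By the Gauss equation, this means $\kappa_i\kappa_m\geq1$ for $i\neq m$.
\end{itemize}
Lemma~\ref{noncollapsing} then gives $\rho_-(\Omega_t)>r_0(n,2R_0,W_\ell(\Omega_0),\ell)$. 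This constant depends only on $n$, $\ell$ and $M_0$; $k$ enters only through the flow.

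There is no real obstacle here, since all the work was done in the two lemmas. The only points needing care are the uniformity of the constants in $t$ and the verification that Theorem~\ref{preservation} is applicable. The latter comes down to noting that $\phi$ is positive and smooth on $[0,T_{\max})$, and that the solution stays in $\Gamma_+$ there.
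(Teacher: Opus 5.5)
Your proposal is correct and follows essentially the same route as the paper, which combines Lemma~\ref{outer-ball}, Theorem~\ref{preservation}, the preservation of $W_\ell$, and Lemma~\ref{noncollapsing} with $j=\ell$ (the paper notes $\ell\leq n-2$; your $\ell\leq n-1$ also meets the lemma's hypothesis). The paper leaves implicit what you check explicitly: that $\phi>0$, so Theorem~\ref{preservation} applies, and that the outer ball gives both $\rho_+(\Omega_t)\leq R_0$ and the diameter bound $2R_0$.
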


\begin{proof}
The conclusion follows from Theorem~\ref{preservation},
Lemmas~\ref{outer-ball} and~\ref{noncollapsing}, and the
preservation of $W_\ell$. Since $1\leq \ell<k\leq n-1$, we have $\ell\leq n-2$,
so Lemma~\ref{noncollapsing} applies with $j=\ell$.
\end{proof}

\begin{lemma}\label{fixed-center}
There is $\tau_0>0$, depending only on the data in Theorem~\ref{main-flow}, with the following property. Given $t_0<T_{\max}$, choose $B_{r_0}(p_0)\subset\Omega_{t_0}$. Then
\begin{equation*}
B_{r_0/4}(p_0)\subset\Omega_t
\quad\text{for }t_0\leq t<\min\{T_{\max},t_0+\tau_0\}.
\end{equation*}
On this time interval, the support function relative to $p_0$ satisfies
\begin{equation}\label{fixed-support-bounds}
\sinh(r_0/4)\leq u\leq\sinh(2R_0),\qquad r\leq2R_0.
\end{equation}
\end{lemma}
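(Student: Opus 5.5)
The plan is a Tso-type barrier argument (cf. \cite{Tso}, \cite{ACW}). The support function $u$ relative to $p_0$ is bounded below by $\sinh(r_0)$ at $t_0$, and we show it cannot decrease too quickly as long as it stays positive. Once the inner ball is controlled, $u\geq\sinh(r_0/4)$ follows from \eqref{support-lower}. The bounds $r\leq 2R_0$ and $u\leq\sinh r\leq\sinh(2R_0)$ hold because $p_0\in\Omega_t\subset B_{R_0}(p)$ by Corollary~\ref{shape-bounds}, which gives $d(p_0,\cdot)\leq 2R_0$ on $\overline\Omega_t$.

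The main step is a lower speed bound for inward motion. Since $F>0$ and $\phi>0$, the normal velocity satisfies $\partial_tX\cdot\nu=\phi-F\geq -F$, so points move inward at most at speed $F$. We cannot bound $F$ yet, since that is the goal of the subsequent Tso estimate, so we compare with a shrinking geodesic ball instead. Consider the ball $B_{\rho(t)}(p_0)$ with $\rho(t_0)=r_0$, evolving by $\rho'=-\coth\rho$. This is the speed $F=\coth\rho$ of a sphere with the $\phi$ term dropped, which only helps because $\phi>0$. The avoidance principle applies here: $M_t$ moves with normal speed $\phi-F(\mathcal W)$ and the sphere moves with $-F(\coth\rho\,\operatorname{Id})$. $F$ is monotone, $\phi>0$ gives the sphere the slower outward push, and both flows are uniformly parabolic. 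Hence $B_{\rho(t)}(p_0)\subset\Omega_t$ while $\rho>0$. Concretely, at a first interior touching point the hypersurface $M_t$ lies outside the sphere, so its principal curvatures are at most $\coth\rho$. Then $F(\mathcal W)\leq\coth\rho$, and the normal velocity of $M_t$ is at least $\phi-\coth\rho>-\coth\rho=\rho'$, which contradicts first touching. To make this rigorous we use a strict perturbation of $\rho$.

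Solving $\rho'=-\coth\rho$ gives $\cosh\rho(t)=\cosh r_0\,e^{-(t-t_0)}$. Therefore $\rho(t)\geq r_0/4$ for $t-t_0\leq\tau_0:=\log(\cosh r_0/\cosh(r_0/4))>0$, and $\tau_0$ depends only on $r_0$, hence only on the data. This gives $B_{r_0/4}(p_0)\subset\Omega_t$ on the stated interval.

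The main obstacle is justifying the comparison at a touching point. The point of $M_t$ where $\partial B_\rho$ first touches from inside is an interior tangency, so the ordering of the second fundamental forms is clean. I expect the only care needed is the strict-inequality perturbation and the role of the global term $\phi(t)$, which enters with a favorable sign because $\phi>0$. Since the flow keeps $\kappa\in\Gamma_+$, $F$ is well defined and monotone on $\Gamma_+$, and the sphere has $\kappa=\coth\rho>0$, so everything lies in the cone.
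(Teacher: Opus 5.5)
Your proposal is correct and follows the paper's proof: compare with the shrinking geodesic ball solving $\rho'=-\coth\rho$, use $F(\mathcal W)\leq\coth\rho$ at a first interior contact together with $\phi>0$ to get the strict inequality, and then read off the support-function bounds from \eqref{support-lower} and the fixed outer ball. The only difference is that the paper starts the ball at radius $r_0/2$, so it lies strictly inside $\Omega_{t_0}$ and no perturbation is needed; this gives $\tau_0=\log\bigl(\cosh(r_0/2)/\cosh(r_0/4)\bigr)$ instead of your $\log\bigl(\cosh r_0/\cosh(r_0/4)\bigr)$.
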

\begin{proof}
Consider the shrinking geodesic ball with radius $\rho$ solving
\begin{equation*}
\rho'=-\coth\rho,\qquad \rho(t_0)=r_0/2.
\end{equation*}
It satisfies $\cosh\rho(t)=e^{-(t-t_0)}\cosh(r_0/2)$. Take
\begin{equation*}
\tau_0=\log\frac{\cosh(r_0/2)}{\cosh(r_0/4)}.
\end{equation*}
At a first contact with the enclosing hypersurface, its Weingarten matrix is at most $\coth\rho\,I$. Since $F$ is increasing and normalized,
\begin{equation*}
\phi-F>-F\geq-\coth\rho=\rho',
\end{equation*}
contradicting a first crossing. Thus the ball remains inside. The support lower bound follows from \eqref{support-lower}, and the upper bounds follow from the fixed outer ball and $p_0\in\Omega_{t_0}$.
\end{proof}

\subsection{Upper bound of \texorpdfstring{$F$}{F}}\label{subsec-F-bound}
We first record consequences of positive sectional curvature.
If $a=\kappa_1<1$, then $\kappa_i\geq a^{-1}$ for $i\geq2$.
For $\ell\leq j\leq k-1\leq n-2$, monotonicity of the adjacent
quotients gives
\begin{equation*}
 \frac{E_{j+1}}{E_j}
 \geq\frac{n-j-1+(j+1)a^2}{a(n-j+ja^2)}
 =1+\frac{(1-a)(ja^2-a+n-j-1)}{a(n-j+ja^2)}
 \geq1.
\end{equation*}
If $\kappa_1\geq1$, the same bound follows from monotonicity.
By \eqref{quotient-product} and \eqref{normalization},
\begin{equation}\label{F-phi-lower}
 F\geq1,\qquad \phi\geq1.
\end{equation}
Moreover, setting
\begin{equation*}
 \beta_{n,k,\ell}
 =\left(\frac{n-k}{n-\ell}\right)^{1/(k-\ell)}>0,
\end{equation*}
we have, whenever $a=\kappa_1<1$,
\begin{equation}\label{quotient-test-spectrum}
 F\geq F(a,a^{-1},\ldots,a^{-1})
 =\frac1a
   \left(\frac{n-k+ka^2}{n-\ell+\ell a^2}\right)^{1/(k-\ell)}
 \geq\frac{\beta_{n,k,\ell}}a.
\end{equation}
Since every factor in \eqref{quotient-product} is at least one,
\begin{equation*}
 \frac{E_k}{E_{k-1}}\geq1,
 \qquad
 \frac{E_{\ell+1}}{E_\ell}\leq F^{k-\ell}.
\end{equation*}
The identities \eqref{quotient-trace-identities} therefore imply
\begin{equation}\label{sectional-trace-bounds}
 \dot F^{ij}g_{ij}\leq\frac{k}{k-\ell}F,
 \qquad
 \dot F^{ij}(h^2)_{ij}
 \leq\frac{n-\ell}{k-\ell}F^{k-\ell+1}.
\end{equation}

\begin{proposition}\label{F-upper}
There exists a constant $C_F>0$, depending only on $n$, $k$, $\ell$
and $M_0$, such that, for every $t\in[0,T_{\max})$,
\begin{equation}\label{F-phi-bounds}
 1\leq F\leq C_F,
 \qquad
 1\leq\phi(t)\leq C_F.
\end{equation}
\end{proposition}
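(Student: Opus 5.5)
The lower bounds in \eqref{F-phi-bounds} are already contained in \eqref{F-phi-lower}, so only the upper bound for $F$ needs work. The bound $\phi\leq\max_{M_t}F$ then follows at once from \eqref{normalization}, since $\phi$ is a weighted average of $F$ with positive weight $E_\ell$. For the upper bound on $F$ we use a Tso-type maximum principle argument \cite{Tso}, localized on time intervals of length $\tau_0$ as in Lemma~\ref{fixed-center}.

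Fix $t_0<T_{\max}$ and the centre $p_0$ from Lemma~\ref{fixed-center}. On $[t_0,\min\{T_{\max},t_0+\tau_0\})$ the support function $u$ relative to $p_0$ satisfies \eqref{fixed-support-bounds}. Set $c_0=\tfrac12\sinh(r_0/4)$ and consider
\begin{equation*}
 \Theta=\frac{F}{u-c_0}.
\end{equation*}
This is well defined because $u-c_0\geq c_0>0$. We compute $(\partial_t-\mathcal L)\Theta$ from \eqref{F-evolution} and \eqref{support-evolution}. At a spatial maximum of $\Theta$, the gradient terms combine as usual: $\nabla F=\Theta\nabla u$, so the cross term $2\dot F^{ij}\nabla_i\Theta\nabla_j u/(u-c_0)$ vanishes. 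What remains is
\begin{equation*}
 (\partial_t-\mathcal L)\Theta\leq
 \frac{(F-\phi)\dot F^{ij}((h^2)_{ij}-g_{ij})}{u-c_0}
 -\frac{F\bigl((\phi-2F)\cosh r+u\dot F^{ij}(h^2)_{ij}\bigr)}{(u-c_0)^2}.
\end{equation*}
Since $\phi\geq1>0$, the term $-\phi\dot F^{ij}(h^2)_{ij}$ can be dropped, and so can $+\phi\dot F^{ij}g_{ij}$'s contribution: it is bounded by $\phi\,kF/(k-\ell)$ via \eqref{sectional-trace-bounds}, but we keep it for now. The key cancellation is
\begin{equation*}
 \frac{F\dot F^{ij}(h^2)_{ij}}{u-c_0}
 -\frac{uF\dot F^{ij}(h^2)_{ij}}{(u-c_0)^2}
 =-\frac{c_0F\dot F^{ij}(h^2)_{ij}}{(u-c_0)^2}
 \leq-\frac{c_0F^3}{(u-c_0)^2},
\end{equation*}
using \eqref{trace-bounds}. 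The remaining positive terms are $2F^2\cosh r/(u-c_0)^2\leq CF^2$ and $\phi\dot F^{ij}g_{ij}/(u-c_0)\leq C\phi F$. The difficulty is that $\phi$ is not yet known to be bounded.

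This is the step I expect to be the main obstacle. To handle it, I would bound $\phi$ independently through the geometry. Since $E_\ell>0$, we have
\begin{equation*}
 \phi\int E_\ell=\int E_\ell F=\int E_\ell^{1-1/(k-\ell)}E_k^{1/(k-\ell)}.
\end{equation*}
By Hölder and Newton--MacLaurin this is at most $\bigl(\int E_\ell\bigr)^{1-1/(k-\ell)}\bigl(\int E_k\bigr)^{1/(k-\ell)}$. Each $\int E_j$ is a combination of quermassintegrals by \eqref{W-definition}, and so it is bounded above by monotonicity under inclusion in $B_{R_0}(p)$. Moreover $\int E_\ell$ is bounded below, since $\Omega_t\supset B_{r_0}$. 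Hence $\phi\leq C$ uniformly.

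With $\phi$ bounded, the maximum principle gives $\tfrac{d}{dt}\max\Theta\leq C\Theta^2(1+\Theta^{-1})-c\Theta^3$. Therefore $\Theta\leq\max\{C',\Theta(t_0)\}$, and moreover $\Theta\leq C''(1+(t-t_0)^{-1/2})$ independent of the initial value by ODE comparison. Evaluating at $t=t_0+\tau_0/2$, and covering $[0,\tau_0]$ with the initial bound from $M_0$, yields $F\leq C_F$ on $[0,T_{\max})$. The estimate $\phi\leq C_F$ then follows as noted at the start.
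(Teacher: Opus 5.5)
Your proof is correct. It follows the paper's overall scheme: the same Tso-type quantity $F/(u-c)$ with $c=\frac12\sinh(r_0/4)$, the same localization on intervals of length $\tau_0$ from Lemma~\ref{fixed-center}, and an ODE comparison of the same type. The one real difference is at the step you flagged as the main obstacle.

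\emph{Your route.} You discard the good term $-\phi\dot F^{ij}(h^2)_{ij}/(u-c_0)$. You then need an a priori bound on $\phi$ to control $+\phi\dot F^{ij}g_{ij}/(u-c_0)$, and you get it from H\"older's inequality,
\[
\phi\le\Bigl(\int E_k\Big/\int E_\ell\Bigr)^{1/(k-\ell)},
\]
together with $\int E_j=(n-j)W_{j+1}+jW_{j-1}$ and the inner/outer ball bounds of Corollary~\ref{shape-bounds}. It is worth saying explicitly that these coefficients are nonnegative because $1\le\ell<k\le n-1$; that is what makes inclusion monotonicity applicable. Also, Newton--MacLaurin is not needed in the H\"older step, and for $k-\ell=1$ that step is an identity.

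\emph{The paper's route.} The paper keeps that term and groups all $\phi$-terms as
\[
\frac{\phi}{(u-c)^2}\Bigl[(u-c)\bigl(\dot F^{ij}g_{ij}-\dot F^{ij}(h^2)_{ij}\bigr)-F\cosh r\Bigr].
\]
By $\dot F^{ij}(h^2)_{ij}\ge F^2$ and $\dot F^{ij}g_{ij}\le\frac{k}{k-\ell}F$, this is nonpositive once $F\ge k/(k-\ell)$. So at a maximum where $Z\ge k/((k-\ell)c)$, nothing about $\phi$ beyond $\phi\ge0$ is used. The bound $\phi\le C_F$ is then read off afterwards from the weighted-average formula.

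\emph{Comparison.} The paper's argument is more economical and works for any nonnegative global term $\phi(t)$. Yours relies on the specific $W_\ell$-preserving normalization and on extra integral-geometric input. In exchange, it gives an explicit bound on $\phi$ depending only on $r_0$ and $R_0$, obtained independently of the maximum principle.
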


\begin{proof}
Fix $t_0<T_{\max}$ and choose an interior center as in
Lemma~\ref{fixed-center}. On the interval
$[t_0,\min\{T_{\max},t_0+\tau_0\})$, let $u$ be the support
function relative to this center and set
\begin{equation*}
 c=\frac12\sinh(r_0/4),
 \qquad
 Z=\frac{F}{u-c}.
\end{equation*}
By \eqref{fixed-support-bounds},
\begin{equation*}
 c\leq u-c\leq\sinh(2R_0),
 \qquad r\leq2R_0.
\end{equation*}
At a spatial maximum of $Z$, we have
\begin{equation*}
 \nabla F=\frac{F}{u-c}\nabla u.
\end{equation*}
Using \eqref{F-evolution} and \eqref{support-evolution},
the gradient terms in the quotient calculation cancel,
and we obtain
\begin{equation}\label{Tso-inequality}
\begin{aligned}
 (\partial_t-\mathcal L)Z
 ={}&
 -\frac{cF+\phi(u-c)}{(u-c)^2}\dot F^{ij}(h^2)_{ij}-\frac{F-\phi}{u-c}\dot F^{ij}g_{ij}
 +\frac{F(2F-\phi)}{(u-c)^2}\cosh r.
\end{aligned}
\end{equation}
The terms containing $\phi$ combine into
\begin{equation*}
 \frac{\phi}{(u-c)^2}
 \left[
 (u-c)\bigl(\dot F^{ij}g_{ij}
                   -\dot F^{ij}(h^2)_{ij}\bigr)
 -F\cosh r
 \right].
\end{equation*}
By \eqref{trace-bounds} and \eqref{sectional-trace-bounds},
this expression is nonpositive whenever $F\geq k/(k-\ell)$. The remaining term
$-F\dot F^{ij}g_{ij}/(u-c)$ is also nonpositive. Hence,
at a spatial maximum where $Z\geq k/((k-\ell)c)$, we have
\begin{equation}\label{Tso-ODE}
\begin{aligned}
 (\partial_t-\mathcal L)Z
 &\leq
 -\frac{cF^3}{(u-c)^2}
 +2\cosh r\,\frac{F^2}{(u-c)^2}\\
 &\leq
 -c^2Z^3+2\cosh(2R_0)Z^2.
\end{aligned}
\end{equation}

Choose a constant $Z_*>0$, depending only on $n$, $k$, $\ell$,
$r_0$ and $R_0$, such that
\begin{equation*}
 Z_*\geq\frac{k}{(k-\ell)c},
 \qquad
 -c^2z^3+2\cosh(2R_0)z^2
 \leq-\frac{c^2}{2}z^3
 \quad\text{for }z\geq Z_*.
\end{equation*}
For $t>t_0$, the function
\begin{equation*}
 z(t)=Z_*+\frac1{c\sqrt{t-t_0}}
\end{equation*}
satisfies
\begin{equation*}
 z'(t)
 =-\frac{c^2}{2}(z-Z_*)^3
 >-\frac{c^2}{2}z^3,
 \qquad
 \lim_{t\downarrow t_0}z(t)=+\infty.
\end{equation*}
At a first contact of $Z$ with $z$ from below, $Z$ attains
a spatial maximum and \eqref{Tso-ODE} contradicts the
preceding strict inequality. The comparison principle
therefore gives
\begin{equation*}
 Z(x,t)\leq Z_*+\frac1{c\sqrt{t-t_0}},
 \qquad
 t_0<t<\min\{T_{\max},t_0+\tau_0\}.
\end{equation*}

For each $t\geq\tau_0/2$ with $t<T_{\max}$, apply this estimate
on the interval starting at $t_0=t-\tau_0/2$. It follows that
\begin{equation*}
 F(x,t)
 \leq\sinh(2R_0)
       \left(Z_*+\frac{\sqrt2}{c\sqrt{\tau_0}}\right).
\end{equation*}
On the initial interval, use the center chosen at $t_0=0$.
Comparison with a constant in \eqref{Tso-ODE} yields
\begin{equation*}
 Z(x,t)\leq
 \max\left\{Z_*,\max_{M_0}Z(\cdot,0)\right\},
 \qquad
 0\leq t<\min\{T_{\max},\tau_0\}.
\end{equation*}
Since $u-c\geq c$ initially and $u-c\leq\sinh(2R_0)$
throughout this interval, these estimates give
$F\leq C_F$ on $[0,T_{\max})$, with $C_F$ depending only
on $n$, $k$, $\ell$ and $M_0$.

Finally, \eqref{normalization} gives
\begin{equation*}
 \phi(t)
 =
 \frac{\displaystyle\int_{M_t}E_\ell F\,d\mu_t}
      {\displaystyle\int_{M_t}E_\ell\,d\mu_t}.
\end{equation*}
Thus $\phi(t)\leq C_F$. Together with
\eqref{F-phi-lower}, this proves \eqref{F-phi-bounds}.
\end{proof}

\subsection{Curvature estimates and long-time existence}\label{subsec-curvature}
\begin{proposition}\label{two-sided-curvature}
There exist positive constants $c_\kappa$ and $C_\kappa$,
depending only on $n$, $k$, $\ell$ and $M_0$, such that
\begin{equation*}
 c_\kappa\leq\kappa_i(x,t)\leq C_\kappa,
 \qquad i=1,\ldots,n,
 \qquad (x,t)\in M\times[0,T_{\max}).
\end{equation*}
\end{proposition}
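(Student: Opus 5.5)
Lower bound first, then the upper bound.

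\emph{Lower bound.} By Proposition~\ref{F-upper}, $F\leq C_F$ on $M\times[0,T_{\max})$, and Theorem~\ref{preservation} gives $\kappa_i\kappa_j>1$ for $i\neq j$. At any point where $a=\kappa_1<1$, inequality \eqref{quotient-test-spectrum} yields $\beta_{n,k,\ell}/a\leq F\leq C_F$. Hence
\begin{equation*}
\kappa_1\geq\min\{1,\beta_{n,k,\ell}C_F^{-1}\}=:c_\kappa,
\end{equation*}
which depends only on $n,k,\ell,M_0$. This step is immediate.

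\emph{Upper bound.} We apply the maximum principle to the largest principal curvature $\kappa_n$, treated in the viscosity or lower-support sense, or to a smooth substitute such as the largest eigenvalue of $\mathcal W$. By \eqref{Weingarten-evolution}, at a maximum point in a principal frame with $\kappa_n$ the largest eigenvalue,
\begin{equation*}
(\partial_t-\mathcal L)\kappa_n\leq \ddot F^{k\ell,pq}\nabla_nh_{k\ell}\nabla_nh_{pq}+\dot F^{ij}\bigl((h^2)_{ij}+g_{ij}\bigr)\kappa_n-\phi\kappa_n^2+\phi-2F.
\end{equation*}
Concavity of $F$ makes the first term nonpositive. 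The gradient terms produced by the non-smoothness of $\kappa_n$ also have a favourable sign; this is the standard computation, and it only helps at a maximum. Now use \eqref{sectional-trace-bounds} and \eqref{F-phi-bounds}:
\begin{equation*}
\dot F^{ij}(h^2)_{ij}\leq\tfrac{n-\ell}{k-\ell}C_F^{k-\ell+1},\qquad \dot F^{ij}g_{ij}\leq\tfrac{k}{k-\ell}C_F,\qquad\phi\geq1.
\end{equation*}
These give
\begin{equation*}
(\partial_t-\mathcal L)\kappa_n\leq C_1\kappa_n-\kappa_n^2+C_F,
\end{equation*}
with $C_1$ depending only on the data. Once $\kappa_n$ exceeds a constant $C_2$ determined by $C_1$ and $C_F$, the right-hand side is negative. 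The maximum principle then gives
\begin{equation*}
\kappa_n\leq\max\Bigl\{\max_{M_0}\kappa_n,\,C_2\Bigr\}=:C_\kappa.
\end{equation*}

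\emph{Main obstacle.} The delicate point is the quadratic reaction term $\dot F^{ij}(h^2)_{ij}\kappa_n$, which would normally compete with $-\phi\kappa_n^2$. Here it is controlled only because positive sectional curvature forces every factor $E_{j+1}/E_j$ to be at least $1$. That yields the bound \eqref{sectional-trace-bounds} on $\dot F^{ij}(h^2)_{ij}$ in terms of $F$ alone, which is uniformly bounded by Proposition~\ref{F-upper}. With this in hand, $-\phi\kappa_n^2$ dominates, and no further estimates are needed.

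To handle the non-smoothness of $\kappa_n$ rigorously, we use an upper smooth support in the same way as in Lemma~\ref{lem:rotating}, but with reversed inequalities. The frame-rotation terms $\sum_{p}(\nabla_m h_{np})^2/(\kappa_n-\kappa_p)$ then enter with a favourable sign.
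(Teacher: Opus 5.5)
Your proposal is correct and follows the paper's proof. The lower bound comes from \eqref{quotient-test-spectrum} combined with $F\le C_F$. The upper bound comes from a first-contact maximum principle for $\kappa_n$, where concavity of $F$, \eqref{sectional-trace-bounds} and $1\le\phi\le C_F$ give $(\partial_t-\mathcal L)\kappa_n\le C_1\kappa_n-\kappa_n^2+C_F$.

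There are two small differences. The paper does not need frame rotations: it tests with $f=h(v,v)$ for a unit field $v$ with $v=e_n$ and $\nabla v=0$ at the contact point, so $f\le\kappa_n$ touches from below and no extra gradient terms arise. It also takes $C_\kappa$ strictly larger than $\max_{M_0}\kappa_n$ and strictly above the root of the reaction polynomial, so the first-contact contradiction is strict; your $\max\{\max_{M_0}\kappa_n,C_2\}$ needs the same adjustment.
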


\begin{proof}
We first prove the lower bound. At a point where
$a=\kappa_1<1$, positive sectional curvature implies
$\kappa_i\geq a^{-1}$ for every $i\geq2$. By
\eqref{quotient-test-spectrum} and Proposition~\ref{F-upper},
\begin{equation*}
 C_F\geq F
 \geq
 \frac1a\left(\frac{n-k+ka^2}{n-\ell+\ell a^2}\right)^{1/(k-\ell)}
 \geq\frac{\beta_{n,k,\ell}}a.
\end{equation*}
Since $k\leq n-1$, we obtain
\begin{equation}\label{kappa-lower}
 \kappa_1\geq
 c_\kappa=
 \min\left\{1,\frac{\beta_{n,k,\ell}}{C_F}\right\}>0.
\end{equation}
The bound also holds where $\kappa_1\geq1$.

For the upper bound, choose
$C_\kappa>\max_{M_0}\kappa_n$ sufficiently large that
\begin{equation*}
 \frac{(n-\ell)C_F^{k-\ell+1}+kC_F}{k-\ell}C_\kappa
 -C_\kappa^2+C_F<0.
\end{equation*}
Suppose that $\kappa_n$ first reaches $C_\kappa$ at
$(x_0,t_0)$, with $t_0<T_{\max}$. Choose an orthonormal
principal frame at this point, and extend $e_n$ to a smooth
local $g(t)$-unit vector field $v$ satisfying
\begin{equation*}
 v=e_n,\qquad
 \nabla_i v=0,\qquad
 \partial_t v=G\mathcal W(v)
 \quad\text{at }(x_0,t_0).
\end{equation*}
These choices are compatible with
$\partial_tg=-2Gh$. Define
\begin{equation*}
 f=h(v,v).
\end{equation*}
Then $f\leq\kappa_n$, with equality at $(x_0,t_0)$.
Thus $f\leq C_\kappa$ in a backward parabolic neighbourhood
of this point and $f(x_0,t_0)=C_\kappa$. The first-contact
test gives
\begin{equation*}
 \partial_t f\geq0,\qquad
 \nabla^2f\leq0,\qquad
 (\partial_t-\mathcal L)f\geq0
 \quad\text{at }(x_0,t_0).
\end{equation*}

All subsequent calculations are made at the contact point.
Since $v$ is unit, $\nabla v=0$, and
$\mathcal W(v)=\kappa_n v$, differentiation gives
\begin{equation*}
 \mathcal Lf
 =\dot F^{ij}\nabla_i\nabla_jh_{nn},
 \qquad
 \partial_t f
 =\partial_t h_{nn}+2G\kappa_n^2
 =\partial_t h^n{}_n.
\end{equation*}
Here the time derivatives of the tensor components are taken
in fixed local coordinates agreeing with the principal frame
at $(x_0,t_0)$. Applying \eqref{Weingarten-evolution}, we obtain
\begin{equation*}
\begin{aligned}
 (\partial_t-\mathcal L)f
 ={}&
 \ddot F^{ij,pq}
       \nabla_nh_{ij}\nabla_nh_{pq}+\dot F^{ij}\bigl((h^2)_{ij}+g_{ij}\bigr)f
 -\phi f^2+\phi-2F.
\end{aligned}
\end{equation*}
The function $F$ is concave, so
\begin{equation*}
 \ddot F^{ij,pq}
       \nabla_nh_{ij}\nabla_nh_{pq}\leq0.
\end{equation*}
Moreover, \eqref{sectional-trace-bounds} and
\eqref{F-phi-bounds} imply
\begin{equation*}
 \dot F^{ij}(h^2)_{ij}\leq\frac{n-\ell}{k-\ell}C_F^{k-\ell+1},
 \qquad
 \dot F^{ij}g_{ij}\leq\frac{k}{k-\ell}C_F,
 \qquad
 1\leq\phi\leq C_F.
\end{equation*}
Consequently,
\begin{equation}\label{kappa-upper-ODE}
\begin{aligned}
 0
 &\leq(\partial_t-\mathcal L)f\\
 &\leq
 \frac{(n-\ell)C_F^{k-\ell+1}+kC_F}{k-\ell}f-f^2+C_F\\
 &=
 \frac{(n-\ell)C_F^{k-\ell+1}+kC_F}{k-\ell}C_\kappa
 -C_\kappa^2+C_F
 <0,
\end{aligned}
\end{equation}
a contradiction. Hence $\kappa_n<C_\kappa$ throughout
$[0,T_{\max})$. Together with \eqref{kappa-lower}, this completes the proof.
\end{proof}

We have obtained uniform two-sided bounds for the principal
curvatures. On each time interval in Lemma~\ref{fixed-center},
write $M_t$ as a radial graph $r=r(\theta,t)$ about the fixed
center. Then \eqref{fixed-support-bounds} and \eqref{radial-graph} give
\begin{equation*}
 \frac{r_0}{4}\leq r\leq2R_0,\qquad
 v=\frac{\sinh r}{u}\leq C,\qquad
 |Dr|_\sigma+|D^2r|_\sigma\leq C.
\end{equation*}
Thus the corresponding radial graphs have uniform $C^2$ bounds.
By \eqref{quotient-product} and \eqref{dual-quotient}, $F$ is
concave and inverse-concave. We apply the regularity argument in
\cite{AW}*{Section~5}, as in
\cite{ACW}*{Section~4.3}, to obtain uniform H\"older estimates
for the second derivatives. Parabolic Schauder estimates
\cite{Lieberman} then give all higher-order bounds.
A standard continuation argument yields $T_{\max}=\infty$.

\subsection{Convergence and the inequalities}\label{subsec-convergence}

By \eqref{W-monotonicity} and positivity of $W_k$,
\begin{equation*}
 \int_0^\infty\int_{M_t}E_\ell
       \bigl(F^{k-\ell}-\phi^{k-\ell}\bigr)(F-\phi)\,d\mu_t\,dt
 <\infty.
\end{equation*}
Since $F,\phi\geq1$,
\begin{equation*}
 \bigl(F^{k-\ell}-\phi^{k-\ell}\bigr)(F-\phi)
 =(F-\phi)^2\sum_{j=0}^{k-\ell-1}F^{k-\ell-1-j}\phi^j
 \geq(k-\ell)(F-\phi)^2.
\end{equation*}
Hence there exist times $t_j\to\infty$ such that
\begin{equation}\label{vanishing-speed-variance}
 \int_{M_{t_j}}E_\ell(F-\phi)^2\,d\mu_{t_j}\longrightarrow0.
\end{equation}
The uniform curvature and higher-order estimates, together
with the fixed outer ball and the inner-radius bound, imply
that, after passing to a subsequence and reparametrizing,
$M_{t_j}$ converges smoothly to a closed strictly convex
hypersurface $M_\infty$. By \eqref{F-phi-bounds}, we may also
assume that $\phi(t_j)\to c>0$. Passing to the limit in
\eqref{vanishing-speed-variance} gives
\begin{equation*}
 \int_{M_\infty}E_\ell(F-c)^2\,d\mu_\infty=0.
\end{equation*}
Since $E_\ell>0$, we obtain
$E_k=c^{k-\ell}E_\ell$ on $M_\infty$.
The limit is embedded and strictly convex, so Koh's rigidity
theorem \cite{Koh}*{Theorem~B} implies that it is a geodesic
sphere. Its radius $r_\infty$ is uniquely determined by
$W_\ell(B_{r_\infty})=W_\ell(\Omega_0)$.

To obtain convergence of the entire flow, we apply the
linearization argument in \cite{AW}*{Section~6}, as used in
\cite{ACW}*{Section~4.3}. At a geodesic sphere, symmetry and
homogeneity give $\dot F^i=1/n$. Since $F$ is constant there,
the variations of the weight $E_\ell\,d\mu$ cancel between the
numerator and denominator of \eqref{normalization}.
Consequently, $\delta\phi=\overline{\delta F}$, and the
linearization of the normal graph equation at a sphere of
radius $r_\infty$ is
\begin{equation}\label{linearized-sphere}
 \mathscr L\eta
 =\frac1{n\sinh^2r_\infty}
  \left(\Delta_\sigma\eta+n\eta-n\overline\eta\right),
 \qquad
 \overline\eta=\frac1{|\mathbb S^n|}
                  \int_{\mathbb S^n}\eta\,d\sigma.
\end{equation}
The nearby equilibria are precisely the geodesic spheres by
\cite{Koh}*{Theorem~B}. They form an $(n+2)$-dimensional smooth
manifold whose tangent space is $\ker\mathscr L$, consisting of
the constants and the first spherical harmonics. The zero
eigenvalue is semisimple, and on harmonics of degree $j\geq2$,
\begin{equation*}
 \mathscr L\eta
 =\frac{n-j(j+n-1)}{n\sinh^2r_\infty}\,\eta,
 \qquad n-j(j+n-1)<0.
\end{equation*}
The normal-graph operator is smooth and uniformly parabolic
near the sphere. Thus the normal stability theorem
\cite{PSZ}*{Theorem~3.1}, applied with $A=-\mathscr L$,
gives exponential convergence for solutions starting sufficiently
near $M_\infty$. Since $M_{t_j}\to M_\infty$ smoothly, the entire
flow converges smoothly and exponentially to a single geodesic
sphere, with radius $r_\infty$ by preservation of $W_\ell$.
This completes the proof of Theorem~\ref{main-flow}.

\begin{proof}[Proof of Theorem~\ref{AF}]
First suppose that $\partial\Omega$ has positive sectional curvature.
For $1\leq \ell<k$ with $k+\ell\leq n$, \eqref{W-monotonicity} and
Theorem~\ref{main-flow} give
\begin{equation*}
 W_k(\Omega)\geq W_k(B_{r_\infty})
 =f_k\circ f_\ell^{-1}\bigl(W_\ell(\Omega)\bigr).
\end{equation*}
For $\ell=0$, the same inequality follows from
\cite{ACW}*{Corollary~1.5}.

Now assume $\operatorname{sec}_{\partial\Omega}\geq0$.
Convexity and \eqref{Gauss} give
\begin{equation*}
 \kappa_i\geq0,\qquad \kappa_i\kappa_j\geq1\quad(i\ne j),
 \qquad\text{hence }\kappa_i>0.
\end{equation*}
Let $\Omega_c$ be the domains obtained by Klein contraction.
By Lemma~\ref{Klein-contraction},
\begin{equation*}
 \operatorname{sec}_{\partial\Omega_c}>0,\qquad
 \partial\Omega_c\longrightarrow\partial\Omega
 \quad\text{smoothly as }c\uparrow1.
\end{equation*}
Applying the positive-curvature case and passing to the limit yields
\begin{equation}\label{AF-nonnegative-limit}
\begin{aligned}
 W_k(\Omega)
 &=\lim_{c\uparrow1}W_k(\Omega_c)\\
 &\geq\lim_{c\uparrow1}f_k\circ f_\ell^{-1}\bigl(W_\ell(\Omega_c)\bigr)
 =f_k\circ f_\ell^{-1}\bigl(W_\ell(\Omega)\bigr).
\end{aligned}
\end{equation}

Suppose equality holds. Starting from $\partial\Omega$, use
\eqref{flow}--\eqref{normalization}. For $\ell=0$, this is the
volume-preserving flow with $F=E_k^{1/k}$ and $E_0=1$.
The local theory in Section~\ref{sec-prelim} gives a common
interval $[0,T]$ of smooth embedded strictly convex solutions
from $\partial\Omega_c$ for $c\in[c_0,1]$, where $c_0<1$ is
sufficiently close to one, with
\begin{equation*}
 M_{c,t}\longrightarrow M_t=\partial\Omega_t
 \quad\text{in }C^2\text{ as }c\uparrow1,
 \qquad \Omega_0=\Omega.
\end{equation*}
For $c\in[c_0,1)$, positive sectional curvature is preserved by
Theorem~\ref{preservation} when $\ell\geq1$, and by
\cite{ACW}*{Theorem~3.1} when $\ell=0$. Hence
$\operatorname{sec}_{M_t}\geq0$ on $[0,T]$.
The identities \eqref{W-monotonicity} also hold for $\ell=0$, so
\eqref{AF-nonnegative-limit} gives
\begin{equation*}
\begin{aligned}
 W_k(\Omega_0)
 &\geq W_k(\Omega_t)
 \geq f_k\circ f_\ell^{-1}\bigl(W_\ell(\Omega_t)\bigr)\\
 &=f_k\circ f_\ell^{-1}\bigl(W_\ell(\Omega_0)\bigr)
 =W_k(\Omega_0).
\end{aligned}
\end{equation*}
Thus $W_k(\Omega_t)$ is constant. Evaluating \eqref{W-monotonicity}
at $t=0$, and using $E_\ell>0$, yields
\begin{equation*}
 F\equiv\phi(0),\qquad E_k=\phi(0)^{k-\ell}E_\ell
 \quad\text{on }\partial\Omega.
\end{equation*}
Koh's theorem \cite{Koh}*{Theorem~B} implies that $\Omega$ is
a geodesic ball. Conversely, every geodesic ball attains equality.
\end{proof}

\begin{proof}[Proof of Corollary~\ref{AF-H6}]
Convexity and nonnegative sectional curvature imply strict convexity,
as in the proof of Theorem~\ref{AF}. For $3\leq n\leq5$ and
$0\leq \ell<k\leq n-2$, we have
$k+\ell\leq2n-5\leq n$, so Theorem~\ref{AF} gives these
comparisons. The comparisons with upper index $n-1$ follow
from \cite{HL}*{Theorem~1.3}, and those with upper index $n$
follow from \cite{BGL}. The latter result is also stated
in \cite{GuanLiSurvey}*{Theorem~4.2(2)}. These two endpoint results also give
all comparisons when $n=2$. Their equality statements prove
the asserted rigidity.
\end{proof}

\section{Sharpness of the index range}\label{sec-counterexamples}

We prove Theorem~\ref{prop-section-counterexample} in three steps.
First, we construct a strictly convex local graph in the Klein
model whose minimum sectional curvature is nonnegative, vanishes
only at the origin, and has a negative initial time derivative
there. We then complete the graph to a smooth closed hypersurface
with nonnegative sectional curvature, leaving a neighbourhood of
the origin unchanged. Finally, Klein contraction gives strictly
positive initial sectional curvature, and short-time continuous
dependence yields the required loss of positivity.

\begin{proof}[Proof of Theorem~\ref{prop-section-counterexample}]
\emph{Step 1. Construction of the local graph.}
For a graph $Y=(x,w(x))$ in the Klein ball, let $g=(g_{ij})$ and $h=(h_{ij})$
be its reduced metric and second fundamental forms with respect to the downward
normal, and write $D_E$ for Euclidean differentiation. The
principal curvatures are the eigenvalues of
\begin{equation}\label{eq:counter-graph-matrix}
 \mathcal A=g^{-1/2}hg^{-1/2}.
\end{equation}
By \eqref{Klein-metric-comparison} and
\eqref{Klein-second-form-comparison}, this matrix is a positive
congruence of $D_E^2w$, with coefficients depending smoothly on
$(x,w,D_Ew)$. If $w(o)=D_Ew(o)=0$ at $o=0$, then
\begin{equation*}
 g(o)=I,\qquad D_Eg(o)=0,\qquad
 \mathcal A(o)=D_E^2w(o),\qquad
 \nabla_mh_{ij}(o)=\partial_{mij}w(o).
\end{equation*}

Write $x=(x_1,x_2,y)$, where $y\in\mathbb R^{n-2}$. For $a>1$
close to one, define
\begin{equation}\label{eq:counter-quartic-graph}
\begin{aligned}
 w_a(x)={}&
 \frac12\bigl(a^{-1}x_1^2+ax_2^2+a^2|y|^2\bigr)
 +\frac{x_1^3}{6}-\frac{a^2x_1x_2^2}{2}\\
 &+\frac{a^3}{4}|x|^4
 +\frac1{12}\bigl(a^{-1}x_1^4+ax_2^4\bigr).
\end{aligned}
\end{equation}
Let $\mathcal A_a$ denote the matrix
\eqref{eq:counter-graph-matrix} of this graph. At $o$,
\begin{equation*}
 \mathcal A_a(o)=\operatorname{diag}(a^{-1},a,a^2,\ldots,a^2),
\end{equation*}
and the only nonzero components of $\nabla h$ are
\begin{equation}\label{eq:counter-third-jets}
 \nabla_1h_{11}(o)=1,\qquad
 \nabla_1h_{22}(o)=\nabla_2h_{12}(o)=\nabla_2h_{21}(o)=-a^2.
\end{equation}
The first two principal curvatures are simple at $o$. Hence
$S_a=\kappa_1\kappa_2-1$ is smooth near $o$, with
\begin{equation*}
 S_a(o)=0,\qquad \nabla S_a(o)=0.
\end{equation*}

We compute the quadratic part of $S_a$. The Klein formulas give
\begin{equation}\label{eq:counter-klein-jets}
\begin{aligned}
 g={}&I+|x|^2I+x\otimes x
          +D_Ew_a\otimes D_Ew_a+O_a(|x|^3),\\
 h={}&\left(1+\frac{|x|^2-|D_Ew_a|^2}{2}\right)
             D_E^2w_a+O_a(|x|^3),\\
 g^{-1/2}={}&I-\frac12\left(|x|^2I+x\otimes x
                  +D_Ew_a\otimes D_Ew_a\right)+O_a(|x|^3).
\end{aligned}
\end{equation}
Substitution into \eqref{eq:counter-graph-matrix} yields, for
$i=1,2$,
\begin{equation}\label{eq:counter-matrix-taylor}
\begin{aligned}
 (\mathcal A_a)_{ii}
 ={}&\partial_{ii}w_a
 -\frac{\kappa_i(o)}2
       \sum_{j=1}^n\bigl(1+\kappa_j(o)^2\bigr)x_j^2\\
 &-\kappa_i(o)\bigl(1+\kappa_i(o)^2\bigr)x_i^2
 +O_a(|x|^3).
\end{aligned}
\end{equation}
Moreover,
\begin{equation*}
 (\mathcal A_a)_{12}=-a^2x_2+O_a(|x|^2),\qquad
 (\mathcal A_a)_{ip}=O_a(|x|^2)
 \quad\text{for }i=1,2,\quad p\geq3.
\end{equation*}
For fixed $a>1$, the first two eigenvalues are separated from
the remaining spectrum at $o$. Since the mixed block is of
order two, their product agrees to second order with the
determinant of the leading $2\times2$ block. Using
\eqref{eq:counter-quartic-graph} and
\eqref{eq:counter-matrix-taylor}, we obtain
\begin{equation}\label{eq:counter-section-flat-minimum}
\begin{aligned}
 S_a(x)
 &=(\mathcal A_a)_{11}(\mathcal A_a)_{22}
           -(\mathcal A_a)_{12}^2-1+O_a(|x|^3)\\
 &=(3a^4-1-2a^{-2})x_1^2
           +(a^2-1)(x_2^2+|y|^2)+O_a(|x|^3)\\
 &=(a^2-1)\bigl[(3a^2+3+2a^{-2})x_1^2
                         +x_2^2+|y|^2\bigr]+O_a(|x|^3).
\end{aligned}
\end{equation}
Thus $S_a$ has a strict local minimum at $o$.

We next compute its initial variation under the normal velocity
$(\phi-F)\nu$. Equations \eqref{h-uncommuted} and
\eqref{metric-evolution} give, at $o$,
\begin{equation}\label{eq:counter-section-time-derivative}
 \partial_tS_a(o,0)
 =a\nabla_{11}F+a^{-1}\nabla_{22}F,
\end{equation}
since the zero-order term is
$(F-\phi)(a+a^{-1})(\kappa_1\kappa_2-1)=0$.
In particular, this derivative is independent of $\phi(0)$.

At $a=1$, the expansion \eqref{eq:counter-klein-jets} and the
identity
\begin{equation*}
 D_E^2\left(\frac{|x|^4}{4}\right)
 =|x|^2I+2x\otimes x
\end{equation*}
give
\begin{equation}\label{eq:counter-klein-cancellation}
 \mathcal A_1
 =I+\left(
 \begin{pmatrix}
  x_1+x_1^2&-x_2\\
  -x_2&-x_1+x_2^2
 \end{pmatrix}\oplus0_{n-2}
 \right)+O(|x|^3).
\end{equation}
Consequently, for $m=1,2$,
\begin{equation*}
 \operatorname{tr}\bigl(\partial_m\mathcal A_1(o)\bigr)=0,
 \qquad
 \operatorname{tr}\bigl((\partial_m\mathcal A_1(o))^2\bigr)=2,
 \qquad
 \operatorname{tr}\bigl(\partial_{mm}\mathcal A_1(o)\bigr)=2.
\end{equation*}
For every symmetric matrix $\xi$, differentiation of
$F=(E_k/E_\ell)^{1/(k-\ell)}$ at $I$ yields
\begin{equation}\label{eq:counter-speed-derivatives}
\begin{aligned}
 \dot F^{ij}(I)\xi_{ij}
 &=\frac{\operatorname{tr}\xi}{n},\\
 \ddot F^{ij,pq}(I)\xi_{ij}\xi_{pq}
 &=\frac{k+\ell-1}{n^2(n-1)}
   \bigl((\operatorname{tr}\xi)^2-n\operatorname{tr}(\xi^2)\bigr).
\end{aligned}
\end{equation}
The chain rule therefore gives
\begin{equation}\label{eq:counter-speed-second-derivative}
 \left.\partial_{mm}\bigl(F(\mathcal A_1)\bigr)\right|_o
 =\frac2n-\frac{2(k+\ell-1)}{n(n-1)},
 \qquad m=1,2.
\end{equation}
Since $F$ is smooth near $I$ and the Christoffel symbols at $o$
vanish for every $a$, it follows from
\eqref{eq:counter-section-time-derivative} that
\begin{equation}\label{eq:counter-section-negative-limit}
 \lim_{a\downarrow1}\partial_tS_a(o,0)
 =\frac4n-\frac{4(k+\ell-1)}{n(n-1)}
 =-\frac{4(k+\ell-n)}{n(n-1)}<0.
\end{equation}

By \eqref{eq:counter-section-negative-limit}, choose and fix
$1<a<\sqrt2$ sufficiently close to $1$ that
\begin{equation}\label{eq:counter-local-negative-bound}
 \partial_tS_a(o,0)
 <-\frac{2(k+\ell-n)}{n(n-1)}<0.
\end{equation}
Set $w=w_a$ and $\mathcal A=\mathcal A_a$. By
\eqref{eq:counter-section-flat-minimum}, there is
$0<\varrho<1/16$ such that
\begin{equation}\label{eq:counter-local-bounds}
 0<D_E^2w<2I,\qquad
 S_a(x)\geq\frac{a^2-1}{2}|x|^2
 \quad\text{on }B_\varrho(0).
\end{equation}
We keep $a$ and $\varrho$ fixed in the remaining steps.

\medskip
\noindent\emph{Step 2. Completion to a closed hypersurface.}
We adapt the spherical completion of \cite{AMZ}*{Sections~4--5}.
For $\delta>0$ sufficiently small, let
\begin{equation*}
 b_\delta(x)=\frac14-\delta-\sqrt{\frac1{16}-|x|^2}
\end{equation*}
be the lower cap of the Euclidean sphere of radius $1/4$
centered at $(0,1/4-\delta)$. The sphere is contained in
$B_{1/2}(0)$, and its hyperbolic principal curvatures satisfy
\begin{equation*}
 \kappa_i^b\geq4(1-|Y|^2)^2\geq\frac94
\end{equation*}
by \eqref{Klein-metric-comparison} and
\eqref{Klein-second-form-comparison}. Since
$D_E^2b_\delta\geq4I$, \eqref{eq:counter-local-bounds} gives
\begin{equation*}
 -\delta+|x|^2\leq b_\delta-w\leq-\delta+C|x|^2
 \qquad\text{on }B_\varrho(0).
\end{equation*}
Consequently, $w>b_\delta$ near $o$, whereas $b_\delta>w$
on a collar of $\partial B_\varrho(0)$, provided $\delta$ is
sufficiently small. The relative position of the graphs is
illustrated in Figure~\ref{fig:counter-gluing-sketch}.

\begin{figure}[htbp]
\centering
\begin{tikzpicture}[x=1.5cm,y=1.5cm]
 \draw[->,gray] (-2.10,0) -- (2.35,0) node[right] {$x$};
 \draw[->,gray] (0,-0.34) -- (0,1.28) node[above] {$z$};
 \draw[densely dashed,gray] (-1.80,0) -- (-1.80,1.05);
 \draw[densely dashed,gray] (1.80,0) -- (1.80,1.05);
 \node[below] at (-1.80,-0.04) {$-\varrho$};
 \node[below] at (1.80,-0.04) {$\varrho$};
 \draw[thick,blue!70!black,domain=-1.8:1.8,samples=90]
   plot (\x,{1.82-sqrt(4-\x*\x)});
 \draw[thick,red!75!black,domain=-1.8:1.8,samples=90]
   plot (\x,{0.19*\x*\x+0.01*\x*\x*\x});
 \fill (0,0) circle (0.9pt);
 \node[below left] at (0,0) {$o$};
 \fill[blue!70!black] (0,-0.18) circle (0.8pt);
 \node[left] at (0,-0.18) {$-\delta$};
 \node[blue!70!black,right] at (1.83,0.98) {$b_\delta$};
 \node[red!75!black,right] at (1.83,0.67) {$w$};
\end{tikzpicture}
\caption{Schematic section of the local graph $w$ and the lower
spherical cap $b_\delta$. The cap is drawn as a circular arc.
Near $o$ one has $w>b_\delta$, while near the boundary of the
patch one has $b_\delta>w$.}
\label{fig:counter-gluing-sketch}
\end{figure}

The upper envelope $\max\{w,b_\delta\}$ agrees with $w$
near $o$ and with $b_\delta$ near the boundary, but need not
be smooth along their intersection. We replace it by a smooth
maximum. Choose an even nonnegative function
$\chi\in C_c^\infty((-1,1))$ with $\int_\mathbb R\chi=1$,
and define
\begin{equation}\label{eq:counter-smoothed-positive-part}
 \zeta_\delta(s)
 =\int_\mathbb R (s-\delta^2t)_+\chi(t)\,dt,
 \qquad (q)_+=\max\{q,0\}.
\end{equation}
Evenness of $\chi$ implies
\begin{equation*}
 0\leq\zeta_\delta'\leq1,\qquad
 \zeta_\delta''\geq0,\qquad
 \zeta_\delta(s)=
 \begin{cases}
 0,&s\leq-\delta^2,\\
 s,&s\geq\delta^2.
 \end{cases}
\end{equation*}
Set
\begin{equation*}
 \widetilde w=w+\zeta_\delta(b_\delta-w),\qquad
 \vartheta=\zeta_\delta'(b_\delta-w).
\end{equation*}
Then $\widetilde w=w$ in a neighbourhood of $o$ and
$\widetilde w=b_\delta$ on the boundary collar. In the
transition region $|b_\delta-w|\leq\delta^2$, the preceding
comparison yields $|x|\leq C\sqrt\delta$. Moreover,
\begin{equation*}
 0\leq\widetilde w-w\leq2\delta^2\vartheta,
 \qquad
 |D_E\widetilde w-D_Ew|\leq C\vartheta\sqrt\delta.
\end{equation*}
Since $\zeta_\delta''\geq0$ and
$D_E^2b_\delta-D_E^2w\geq2I$,
\begin{equation*}
 \begin{aligned}
 D_E^2\widetilde w
 &=(1-\vartheta)D_E^2w+\vartheta D_E^2b_\delta
  +\zeta_\delta''(b_\delta-w)
     D_E(b_\delta-w)\otimes D_E(b_\delta-w)\\
 &\geq D_E^2w+2\vartheta I.
 \end{aligned}
\end{equation*}

The Klein formulas express the symmetric Weingarten matrix
of a graph $v$ as
$\mathcal A_v=\mathcal T_vD_E^2v\,\mathcal T_v$,
where $\mathcal T_v$ is positive definite, depends smoothly
on $(x,v,D_Ev)$, and equals $I$ when $(x,v,D_Ev)=(0,0,0)$.
On the transition region,
\begin{equation*}
 \|\mathcal T_{\widetilde w}-\mathcal T_w\|
 \leq C\vartheta\sqrt\delta,
 \qquad
 \mathcal T_{\widetilde w}^{\,2}\geq(1-C\sqrt\delta)I.
\end{equation*}
Together with the uniform bound for $D_E^2w$, this gives
\begin{equation}\label{eq:counter-smoothing-matrix}
 \begin{aligned}
 \widetilde{\mathcal A}
 &=\mathcal T_{\widetilde w}D_E^2\widetilde w\,
   \mathcal T_{\widetilde w}\geq\mathcal A+(2-C\sqrt\delta)\vartheta I
 \geq\mathcal A+\vartheta I
 \end{aligned}
\end{equation}
for sufficiently small $\delta$, where
$\widetilde{\mathcal A}=\mathcal A_{\widetilde w}$ and
$\mathcal A=\mathcal A_w$. Consequently, in the transition region,
\begin{equation*}
 \widetilde\kappa_i\geq\kappa_i+\vartheta,
 \qquad
 \widetilde\kappa_1\widetilde\kappa_2
 \geq(\kappa_1+\vartheta)(\kappa_2+\vartheta)
 \geq(1+\vartheta)^2,
\end{equation*}
where we used $\kappa_1\kappa_2\geq1$ and
$\kappa_1+\kappa_2\geq2$. The inequality is strict when
$\vartheta>0$. Outside this region the smoothed graph agrees
with either $w$ or $b_\delta$.
It is therefore strictly convex and has positive sectional
curvature away from $o$.

Replacing the lower spherical cap over $B_\varrho(0)$ by the
graph of $\widetilde w$ produces a smooth closed embedded
hypersurface $\Sigma$. Indeed, the graphs agree on the boundary
collar, and the modified lower cap remains below the upper cap
$b_\delta^+(x)=\frac14-\delta+\sqrt{\frac1{16}-|x|^2}$,
since
\begin{equation*}
 0\leq\widetilde w\leq\max\{w,b_\delta\}+\delta^2
 \leq4\varrho^2+\delta^2<\frac14<b_\delta^+(x)
 \qquad\text{on }B_\varrho(0).
\end{equation*}
Here we used $0\leq w\leq|x|^2$ and
$b_\delta\leq4|x|^2$ on $B_\varrho(0)$. Decreasing
$\delta$ if necessary, we have $\Sigma\subset B_{1/2}(0)$.
Let $\widehat M$ be its inverse Klein image. Then
\begin{equation}\label{eq:counter-closed-data}
 \kappa_i(\widehat M)>0,\qquad
 \operatorname{sec}_{\widehat M}\geq0,\qquad
 \operatorname{sec}_{\widehat M}>0
 \quad\text{on }\widehat M\setminus\{o\}.
\end{equation}
Since $\widehat M$ agrees with the local graph near $o$ and
\eqref{eq:counter-section-time-derivative} is independent of
$\phi(0)$, the initial variation at $o$ retains the bound
\eqref{eq:counter-local-negative-bound}.

\medskip
\noindent\emph{Step 3. Strict approximation and loss of sectional curvature.}
For the hypersurface $\Sigma$ obtained in Step~2, define
\begin{equation}\label{eq:counter-contracted-family}
 M_c=\left\{
 \frac{(1,cY)}{\sqrt{1-c^2|Y|^2}}
 \ \middle|\ Y\in\Sigma
 \right\},\qquad 0<c\leq1.
\end{equation}
Then $M_1=\widehat M$. By Lemma~\ref{Klein-contraction},
\begin{equation}\label{eq:counter-contraction}
 \operatorname{sec}_{M_c}\geq c^{-2}-1>0\quad(0<c<1),
 \qquad M_c\longrightarrow\widehat M
 \quad\text{smoothly as }c\uparrow1.
\end{equation}
The principal curvatures of $\widehat M$ lie in a compact subset
of $\Gamma_+$. The local theory in Section~\ref{sec-prelim}
therefore gives a common interval $[0,T_0]$ of smooth embedded
strictly convex solutions for $c\leq1$ sufficiently close to one,
with continuous dependence in $C^2$.

Parametrize these solutions over $\widehat M$ as $X_c(p,t)$,
with $X_1(p,0)=p$. Denote the ordered principal curvatures at
$X_c(p,t)$ by $\kappa_{1,c}(p,t),\ldots,\kappa_{n,c}(p,t)$,
and set
\begin{equation*}
 S_c(p,t)=\kappa_{1,c}(p,t)\kappa_{2,c}(p,t)-1.
\end{equation*}
Since $\nabla S_1(o,0)=0$, reparametrization does not change
its initial variation. By \eqref{eq:counter-local-negative-bound}
and the agreement of $\widehat M$ with the local graph near $o$,
\begin{equation*}
 S_1(o,0)=0,\qquad
 \partial_tS_1(o,0)<-\frac{2(k+\ell-n)}{n(n-1)}.
\end{equation*}
The first two principal curvatures are simple at $(o,0)$.
By continuity of this derivative, after decreasing $T_0$ and
integrating in time, we obtain
\begin{equation*}
 S_1(o,t)\leq-\frac{k+\ell-n}{n(n-1)}\,t,
 \qquad 0\leq t\leq T_0.
\end{equation*}

Given $T>0$, fix $0<t_*<\min\{T,T_0\}$. Continuous dependence
in $C^2$ implies
\begin{equation*}
 S_c(o,t_*)\longrightarrow S_1(o,t_*)
 \quad\text{as }c\uparrow1.
\end{equation*}
Thus, for $c<1$ sufficiently close to one,
\begin{equation*}
 S_c(o,t_*)<-\frac{k+\ell-n}{2n(n-1)}\,t_*<0.
\end{equation*}
Together with \eqref{eq:counter-contraction}, this gives a
smooth closed initial hypersurface with positive sectional
curvature whose solution develops negative sectional curvature
before time $T$, while remaining smooth and strictly convex.
Since $T>0$ is arbitrary, the theorem follows.
\end{proof}

\section{A paired concavity inequality}\label{sec-algebra}

We prove the estimate used in
Lemma~\ref{lem:third-order-nonnegative}. We first reduce the
quadratic form to a single derivative estimate. The remaining
subsections establish that estimate using elementary symmetric
polynomials. Throughout this section, $q_i=\partial_iq$ and
$q_{ij}=\partial_i\partial_jq$ denote derivatives in the
component variables.

    \begin{proposition}\label{key-lemma}
    Let $n\geq3$ and $1\leq s<r\leq n$, and set
    \begin{equation*}
     q=\left(\frac{\sigma_r}{\sigma_s}\right)^{1/(r-s)}.
    \end{equation*}
    The inequality
    \begin{equation}\label{paired-Jacobi}
     \mathcal J[q](\lambda,\xi)
     =-D^2q(\lambda)[\xi,\xi]
      +2\sum_{i=2}^n\frac{q_i(\lambda)}{\lambda_1-\lambda_i}\xi_i^2
      -\frac{2q_1(\lambda)}{\lambda_1-\lambda_2}\xi_1^2\geq0
    \end{equation}
    holds for every
    \begin{equation*}
     \lambda_1>\lambda_2\geq\cdots\geq\lambda_n>0
    \end{equation*}
    and $\xi\in \mathbb{R}^n$ satisfying the paired constraint 
    \begin{align}\label{pair}
 \lambda_2\xi_1+\lambda_1\xi_2=0,
    \end{align}
    if and only if $r+s\geq n$. In this range, equality holds if
    and only if $\xi=0$.
    \end{proposition}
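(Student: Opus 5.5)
The plan splits into an algebraic reduction (the \emph{if} direction) and a test configuration (the \emph{only if} direction).

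\textbf{Reduction of the quadratic form.} We first eliminate $\xi_2$ using the paired constraint \eqref{pair}, writing $\xi_2=-\lambda_2\xi_1/\lambda_1$. Then $\mathcal J[q](\lambda,\xi)$ becomes a quadratic form in $(\xi_1,\xi_3,\ldots,\xi_n)$. The $\xi_1^2$ coefficient combines $-q_{11}-2q_{12}(-\lambda_2/\lambda_1)-q_{22}\lambda_2^2/\lambda_1^2$ with the rotation terms
\begin{equation*}
 \frac{2}{\lambda_1-\lambda_2}\left(q_2\frac{\lambda_2^2}{\lambda_1^2}-q_1\right).
\end{equation*}
Since $q_1\le q_2$ by concavity and symmetry, this combination is a priori negative. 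This is the term that must be compensated.

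The tail $(\xi_3,\ldots,\xi_n)$ carries $-D^2q$ restricted to it, which is nonnegative by concavity of $q$ (Andrews' theorem, as in \eqref{quotient-product}), plus the positive weights $2q_i/(\lambda_1-\lambda_i)$. By Schur complement, positivity reduces to a single scalar inequality: the $\xi_1^2$ coefficient must dominate the cross terms $\sum_{i\ge3}(q_{1i}-\tfrac{\lambda_2}{\lambda_1}q_{2i})\xi_1\xi_i$. I would pass to $\log q=\frac{1}{r-s}(\log\sigma_r-\log\sigma_s)$, so that $-D^2q=q\bigl(-D^2\log q-(D\log q)^{\otimes2}\bigr)$. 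The derivatives of $\log\sigma_m$ are then expressed through $\sigma_{m-1}(\lambda|i)$ and $\sigma_{m-2}(\lambda|ij)$. The key identity to exploit is
\begin{equation*}
 \sigma_m(\lambda|1)-\sigma_m(\lambda|2)=(\lambda_2-\lambda_1)\sigma_{m-1}(\lambda|12),
\end{equation*}
which turns the divided differences $(q_1-q_2)/(\lambda_1-\lambda_2)$ into polynomial expressions. After clearing denominators, the whole inequality becomes the nonnegativity of a polynomial in $\lambda$ built from $\sigma_j(\lambda|12)$ and $\sigma_j(\lambda|12i)$.

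\textbf{Main obstacle.} The hardest step is proving this polynomial inequality, and this is where $r+s\ge n$ must enter. I would try Newton--MacLaurin type inequalities on the $(n-2)$-variable quantities $\sigma_j(\lambda|12)$, in the form
\begin{equation*}
 \frac{\sigma_{r-1}(\lambda|12)}{\sigma_{r-2}(\lambda|12)}\ \text{versus}\ \frac{\sigma_{s-1}(\lambda|12)}{\sigma_{s-2}(\lambda|12)}.
\end{equation*}
Near-equality appears when $\lambda_2=\cdots=\lambda_n$, where the Newton coefficients involve the ratio $(n-r)/(n-s)$ against $r/s$. My expectation is that the sign condition becomes $r+s-n\ge0$, matching the computation \eqref{eq:counter-section-negative-limit}. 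So I would first verify the case $\lambda_2=\cdots=\lambda_n$ exactly, then extend it by a monotonicity or convexity argument in the tail variables, for example a Schur-concavity argument pushing $(\lambda_3,\ldots,\lambda_n)$ toward equality. Strictness, with equality iff $\xi=0$, should follow from the strictly positive rotation weights $2q_i/(\lambda_1-\lambda_i)$ on the tail, together with strict positivity of the reduced $\xi_1^2$ coefficient.

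\textbf{Necessity when $r+s<n$.} Take $\lambda=(1+\epsilon,1,\ldots,1)$ with $\epsilon\downarrow0$, $\xi_3=\cdots=\xi_n=0$, and $\xi_1=1$, $\xi_2=-\lambda_2/\lambda_1$. Expanding $q$ at the identity as in \eqref{eq:counter-speed-derivatives}, the Hessian is
\begin{equation*}
 D^2q(I)=c\bigl(n(\operatorname{tr}\xi)^2\ \text{-type}\bigr)\ \text{terms with factor}\ (r+s-1)/(n(n-1)).
\end{equation*}
The divided-difference terms tend to $2(\ddot q^{11}-\ddot q^{12})$, and the leading coefficient should be proportional to $r+s-n$. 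It therefore becomes negative when $r+s<n$, which gives the converse.
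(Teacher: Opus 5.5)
\textbf{The sufficiency direction is not proved.} Your overall framing is reasonable: eliminate $\xi_2$, use concavity on the tail, and reduce to a Schur-complement condition on the $\xi_1^2$ coefficient. Your test configuration for necessity is also the right one. But sufficiency is the whole content of the proposition, and the route you sketch for it runs into a concrete obstruction.

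The tail block $M=-D^2q|_{\mathrm{tail}}+\operatorname{diag}\bigl(2q_i/(\lambda_1-\lambda_i)\bigr)$ is a full matrix; already at $\boldsymbol1$ one has $q_{ij}\neq0$ for $i\neq j$. So the exact Schur complement $A-b^{T}M^{-1}b$ involves the determinant and adjugate of $M$. Clearing denominators does \emph{not} give a polynomial built only from $\sigma_j(\lambda|12)$ and $\sigma_j(\lambda|12i)$, and nothing in your plan controls it for a general tail.

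The missing idea is to give up exactness in a controlled way. At $\lambda=(a,1,z)$ the paper takes $v=(a,-aq_1/q_2,0,\ldots,0)$ and applies concavity to $\xi-\tau v$. This gives the lower bound
\[
-D^2q[\xi,\xi]+\sum_{i\ge3}\frac{2q_i}{a-z_i}\xi_i^2\ \ge\ \frac{(c_1-c_2/a)^2\,\xi_1^2}{-D^2q[v,v]+\sum_{i\ge3}\frac{a-z_i}{2q_i}c_i^2},\qquad c_i=-aq_2\,\partial_i(q_1/q_2),
\]
in which the tail enters only diagonally. Since $q_1/q_2=(1+t+u)/(a^2+at+u)$, with $t=Q/P$ and $u=R/P$ depending only on $z$, everything reduces to these two ratios and their coordinate derivatives.

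\textbf{The equal-tail reduction is unsupported.} Your plan to verify $\lambda_2=\cdots=\lambda_n$ and then push the tail toward equality by Schur-concavity is a heuristic. You establish no such monotonicity, and the paper never reduces to equal tails. Instead it controls arbitrary $z\in(0,1]^{n-2}$ through the following estimates:
\begin{itemize}
\item coordinatewise bounds $0\le t_i\le1$ and $0\le u_i\le t$ (or $2t$ when $r=s+1$), obtained from total positivity of $(\sigma_{p-q})$;
\item the range bounds $u\le (n-r)(n-s)/(rs)\le1$ and $t<2$ for $r=s+1$, which is where $r+s\ge n$ enters, since $(n-r)(n-s)\le rs\iff r+s\ge n$;
\item the weighted tail bounds $\sum\omega_it_i\le t+2$ and $\sum\omega_iu_i\le4$, proved by monomial allocation;
\item a closing scalar inequality.
\end{itemize}
Something of this strength is needed. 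A plan that leaves the core inequality as ``the main obstacle'' is not a proof.

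\textbf{A slip in the necessity computation.} Take $\lambda=(1+\epsilon,1,\ldots,1)$ and $\xi=(1,-(1+\epsilon)^{-1},0,\ldots,0)$. The rotation terms $\frac{2}{\epsilon}\bigl(q_2(1+\epsilon)^{-2}-q_1\bigr)$ tend to $2(q_{12}-q_{11})-4q_2$, not to $2(q_{11}-q_{12})$. The $-4q_2$ comes from the factor $\lambda_2^2/\lambda_1^2$, and it is exactly what produces the limit $4q(\boldsymbol1)(r+s-n)/(n(n-1))$. With the limit you state, the rotation terms would cancel the Hessian contribution $2(q_{12}-q_{11})$ and the limit would be zero.
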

    
    The adjacent quotients are concave on $\Gamma_+$ by
    \cite{HuiskenSinestrari1999}*{Theorem~2.5}. Their geometric mean
    \begin{equation}\label{alg-quotient-concavity}
     q=\left(\prod_{j=s+1}^r\frac{\sigma_j}{\sigma_{j-1}}
         \right)^{1/(r-s)}
    \end{equation}
    is therefore concave, and Newton's inequalities give $q_i>0$.
    For related concavity estimates, see
    \cite{LiWu}*{Theorem~1.1}, \cite{DongZhang}*{Lemma~1.1}, and
    \cite{Tsai}*{Lemma~1.1}.
    
    \subsection{Reduction of the quadratic form}\label{subsec-alg-main}
    
    By homogeneity, we may normalize
    \begin{equation*}
     \lambda=(a,1,z),\qquad a>1,\qquad
     z=(z_3,\ldots,z_n)\in(0,1]^{n-2}.
    \end{equation*}
    The constraint becomes $\xi_2=-\xi_1/a$, and hence
    \begin{equation}\label{alg-normalized-form}
     \mathcal J[q]
     =-D^2q[\xi,\xi]
      +\sum_{i=3}^n\frac{2q_i}{a-z_i}\xi_i^2
      -\frac2{a-1}\left(q_1-\frac{q_2}{a^2}\right)\xi_1^2.
    \end{equation}
    The first two terms are nonnegative.
    \begin{proof}[Proof of Proposition~\ref{key-lemma}]
    For sufficiency, we assume first that $r+s\geq n$. Define
    \begin{equation*}
     v=\left(a,-a\frac{q_1}{q_2},0,\ldots,0\right),\qquad
     c_i=-D^2q[v,\mathbf e_i],\qquad 1\leq i\leq n,
    \end{equation*}
    where $\mathbf e_i$ is the $i$th coordinate vector.  The paired constraint \eqref{pair} gives
    \begin{equation}\label{pair-qx}
     \left(c_1-\frac{c_2}{a}\right)\xi_1
     =-D^2q[v,\xi]-\sum_{i=3}^n c_i\xi_i.
    \end{equation} For any real $\tau$, writing $\xi=\tau v+(\xi-\tau v)$ and using \eqref{pair-qx}, we have
\begin{equation*}
\begin{aligned}
&-D^2q[\xi,\xi]
 +\sum_{i=3}^n\frac{2q_i}{a-z_i}\xi_i^2\\
={}&-D^2q[\xi-\tau v,\xi-\tau v]
 +\sum_{i=3}^n\frac{2q_i}{a-z_i}
 \left(\xi_i+\tau\frac{a-z_i}{2q_i}c_i\right)^2\\
&\quad
 -2\tau D^2q[v,\xi]
 -2\tau\sum_{i=3}^n c_i\xi_i
 -\tau^2\left(
 -D^2q[v,v]
 +\sum_{i=3}^n\frac{a-z_i}{2q_i}c_i^2
 \right)\\
\geq{}&
 2\tau\left(c_1-\frac{c_2}{a}\right)\xi_1
 -\tau^2\left(
 -D^2q[v,v]
 +\sum_{i=3}^n\frac{a-z_i}{2q_i}c_i^2
 \right).
\end{aligned}
\end{equation*}
where we used the concavity of $q$ and discarded the
nonnegative square terms. Since $\tau$ is arbitrary, we obtain
\begin{equation}\label{alg-direct-sum-CS}
-D^2q[\xi,\xi]
+\sum_{i=3}^n\frac{2q_i}{a-z_i}\xi_i^2
\geq
\frac{\left(c_1-\frac{c_2}{a}\right)^2\xi_1^2}
{-D^2q[v,v]
 +\displaystyle\sum_{i=3}^n\frac{a-z_i}{2q_i}c_i^2}
\end{equation}
To control the last term of \eqref{alg-normalized-form}, in view of \eqref{alg-direct-sum-CS}, we establish the following lemma in
    Subsection~\ref{subsec-alg-completion}. 
    \begin{lemma}\label{alg-derivative-estimate}
    Suppose $r+s\geq n$. At $\lambda=(a,1,z)$, we have
    $-D^2q[v,v]>0$, and
    \begin{equation}\label{alg-strict-schur-comparison}
     \left(c_1-\frac{c_2}{a}\right)^2
     >\frac2{a-1}\left(q_1-\frac{q_2}{a^2}\right)
       \left[-D^2q[v,v]
          +\sum_{i=3}^n\frac{a-z_i}{2q_i}c_i^2\right].
    \end{equation}
    \end{lemma}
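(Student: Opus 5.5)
Lemma~\ref{alg-derivative-estimate} reduces to explicit formulas for the derivatives of $q$ at $\lambda=(a,1,z)$ in terms of elementary symmetric polynomials of $z$. Write $\sigma_j=\sigma_j(\lambda)$ and let $\sigma_j(\hat i)$, $\sigma_j(\hat i\hat k)$ denote the polynomials with the indicated variables deleted. Then
\begin{equation*}
 \frac{q_i}{q}=\frac1{r-s}\left(\frac{\sigma_{r-1}(\hat i)}{\sigma_r}-\frac{\sigma_{s-1}(\hat i)}{\sigma_s}\right),
\end{equation*}
and $D^2q=q\bigl(D^2\log q+D\log q\otimes D\log q\bigr)$, where $D^2\log q$ is the difference of the Hessians of $\frac{1}{r-s}\log\sigma_r$ and $\frac1{r-s}\log\sigma_s$. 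Everything is expressed through $A_j=\sigma_j(z)$, using $\sigma_j=aA_{j-2}+(a+1)A_{j-1}\cdot0+\dots$; concretely $\sigma_j=A_j+(a+1)A_{j-1}+aA_{j-2}$. The vector $v$ is chosen so that $Dq\cdot v=aq_1-aq_1=0$. Hence $-D^2q[v,v]=-q\,D^2\log q[v,v]$, and this is positive by strict concavity of $q$ transverse to the radial direction, since $v$ is not parallel to $\lambda$. This disposes of the first claim.

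For \eqref{alg-strict-schur-comparison}, I will compute $c_i=-D^2q[v,\mathbf e_i]$ using $Dq\cdot v=0$:
\begin{equation*}
 c_i=-q\,D^2\log q[v,\mathbf e_i].
\end{equation*}
The mixed second derivatives $\partial_1\partial_i\log\sigma_m$ and $\partial_2\partial_i\log\sigma_m$ involve $\sigma_{m-2}(\hat1\hat i)$, so each $c_i$ for $i\ge3$ becomes a rational expression in $a$, $z_i$ and the $A_j(\hat i)$. Three quantities are then needed: $c_1-c_2/a$, $q_1-q_2/a^2$ (which is positive, since concavity and symmetry give $q_2\ge q_1$, while $a^2q_1\ge q_2$ by inverse concavity, cf.\ \eqref{psi-properties}), and the bracket. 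I expect the factor $(a-1)$ to divide both $c_1-c_2/a$ and $q_1-q_2/a^2$, since both vanish at $a=1$ by symmetry. After dividing out, the inequality should become a polynomial inequality in $a$ and the $A_j$ with denominators $\sigma_r^2\sigma_s^2$ cleared.

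\textbf{Main obstacle.} The main obstacle is proving that cleared polynomial inequality and seeing precisely where $r+s\ge n$ enters. My plan is first to bound the sum $\sum_{i\ge3}\frac{a-z_i}{2q_i}c_i^2$ from above. Using $z_i\le1<a$ and Newton--Maclaurin type inequalities for the truncated sums $A_j(\hat i)$, I would reduce to the extremal configuration in which all $z_i$ are equal, or in which $z$ degenerates with some $z_i\to0$. I then test the resulting one-variable inequality in $a$. The necessity computation in Section~\ref{sec-counterexamples} suggests the critical expansion is at $a\downarrow1$, $z=\mathbf 1$, where the leading coefficient should be proportional to $r+s-n$. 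So I would first verify the inequality near $\lambda=\mathbf 1$, where the sign is governed by $(r+s-n)$ plus positive terms, and then show monotonicity in $a$ and in each $z_i$. I expect the latter via a Laguerre-type or interlacing argument for $A_j$, to propagate strictness. If direct monotonicity fails, the fallback is to expand everything as a polynomial in $a$ with coefficients that are quadratic forms in the $A_j$, and to check nonnegativity coefficientwise using Newton's inequalities $A_{j-1}A_{j+1}\le A_j^2$ together with $r+s\ge n$. That bookkeeping is where the real work lies.
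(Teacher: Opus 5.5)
You have the set-up right: $v$ satisfies $Dq[v]=0$, and $-D^2q[v,v]=-q\,D^2\log q[v,v]$, $c_i=-q\,D^2\log q[v,\mathbf e_i]$. But the proposal stops where the content of the lemma begins, and one of its structural expectations is false. At $a=1$ the swap $\lambda_1\leftrightarrow\lambda_2$ sends $v=\mathbf e_1-\mathbf e_2$ to $-v$. Symmetry therefore gives $c_2=-c_1$, not $c_2=c_1$, so $c_1-c_2/a\to-D^2q[v,v]>0$ and $a-1$ does not divide it. What does vanish at $a=1$ are the $c_i$ with $i\ge3$. Consequently, as $a\downarrow1$ the tail drops out, and the difference of the two sides of \eqref{alg-strict-schur-comparison} tends to a positive multiple of $1-u(z)$, where $u=R/P$ as in \eqref{alg-ratios}. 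Since $1-u(\mathbf 1)=n(r+s-n)/(rs)$, your guess about where $r+s-n$ appears is correct. It also shows that the inequality is \emph{sharp} at $a\downarrow1$, $z=\mathbf 1$ when $r+s=n$, so there is no room for lossy estimates. A reduction to ``all $z_i$ equal or some $z_i\to0$'' needs a symmetrization or convexity principle for the whole expression; you do not supply one, and none is evident. The fallback does not apply as stated either. The tail $\sum_{i\ge3}\frac{a-z_i}{2q_i}c_i^2$ carries $n-2$ different denominators $q_i$ and the deleted polynomials $\sigma_j(z|i)$. Clearing denominators therefore does not give a polynomial in $a$ whose coefficients are quadratic forms in the $\sigma_j(z)$. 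The missing idea is a dimension-free bound for this tail that is sharp enough to survive near that corner.

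The paper's proof supplies exactly this.
\begin{itemize}
\item It uses $c_i=-aq_2\,\partial_i(q_1/q_2)$ and $q_1/q_2=(1+t+u)/(a^2+at+u)$, with $t=Q/P$ and $u=R/P$ depending on $z$ alone. In the notation introduced around \eqref{alg-scalar-quantities}, this gives closed forms: $c_1-c_2/a=q_2\mathsf H/\mathsf C^2$, $-D^2q[v,v]=2a^2q_2\mathsf S\mathsf T/\mathsf C^2$, and $c_i=-\frac{aq_2(a-1)}{\mathsf C^2}\bigl[(a-u)t_i+\mathsf T u_i\bigr]$ for $i\ge3$.
\item The second formula proves the first claim of the lemma. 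Your appeal to ``strict concavity transverse to the radial direction'' requires the kernel of $D^2q$ to be exactly $\mathbb R\lambda$, which concavity and homogeneity alone do not give and which you do not prove.
\item Your idea of monotonicity in each $z_i$ is the right one for $t$ and $u$ separately. The paper proves $0\le t_i\le1$ and $0\le u_i\le\chi t$ by total positivity of $(\sigma_{p-q})$. Hence $u\le(n-r)(n-s)/(rs)\le1$, and $t<2$ when $r=s+1$; this is where $r+s\ge n$ enters.
\item The tail is then controlled by $q_i\ge q_2/\omega_i$, with $\omega_i=P(z)/P(z^{(i)})$. This is combined with the weighted bounds $\sum_i\omega_it_i\le t+2$ and $\sum_i\omega_iu_i\le4$, proved by a monomial allocation at $n=r+s$.
\item Finally, the factorization $\mathsf H^2-4\mathsf U\mathsf S\mathsf T\mathsf C=4(a-u)\mathsf S\mathsf K\mathsf T+(a-1)^2(a-u)^2(t+2)^2$ is positive because $u\le1<a$.
\end{itemize}
Your plan contains none of these ingredients or substitutes for them. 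A minor point: $q_1-q_2/a^2$ is only nonnegative, not positive. For $(r,s)=(n,n-1)$ one has $\lambda_i^2q_i=q^2$ for every $i$, so it vanishes. This is harmless, because the right side of \eqref{alg-strict-schur-comparison} is then zero.
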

If $\xi_1=0$, then \eqref{pair} gives $\xi_2=0$, and
\eqref{alg-normalized-form} is strictly positive for every
nonzero $\xi$, since $q_i>0$.
Thus $\mathcal J[q](\lambda,\xi)\ge0$, with equality
if and only if $\xi=0$.

For necessity, suppose that $r+s<n$ and that
\eqref{paired-Jacobi} holds for all admissible
$(\lambda,\xi)$. We prove this by contradiction. At $\boldsymbol1=(1,\ldots,1)$, we have
\begin{equation}\label{alg-umbilic-derivatives}
 q_i=\frac{q(\boldsymbol1)}n,\qquad
 q_{ii}=-\frac{q(\boldsymbol1)(r+s-1)}{n^2},\qquad
 q_{ij}=\frac{q(\boldsymbol1)(r+s-1)}{n^2(n-1)}\quad(i\ne j).
\end{equation}
Taking $\lambda(a)=(a,1,\ldots,1)$ and
$\xi(a)=(a,-1,0,\ldots,0)$ gives
\begin{equation*}
 \mathcal J[q](\lambda(a),\xi(a))
 =-D^2q[\xi(a),\xi(a)]
  -2\frac{a^2q_1(\lambda(a))-q_2(\lambda(a))}{a-1}.
\end{equation*}
By \eqref{alg-umbilic-derivatives}, the two terms tend to
$2q(\boldsymbol1)(r+s-1)/(n(n-1))$ and
$-2q(\boldsymbol1)(2n-r-s-1)/(n(n-1))$, respectively. Thus
\begin{equation}\label{alg-necessary-range}
 \lim_{a\downarrow1}\mathcal J[q](\lambda(a),\xi(a))
 =\frac{4q(\boldsymbol1)(r+s-n)}{n(n-1)}<0.
\end{equation}
Hence $\mathcal J[q](\lambda(a),\xi(a))<0$ for $a>1$
sufficiently close to $1$, a contradiction.
\end{proof}

\subsection{Symmetric polynomial estimates}\label{subsec-alg-polynomials}

For a positive vector $z$ of length $N$, use $\sigma_j(z)$ with
$\sigma_0=1$ and $\sigma_j=0$ outside $0\leq j\leq N$. For
$1\leq s<r\leq N+2$, set
\begin{equation}\label{alg-ratios}
\begin{aligned}
 P&=\sigma_{r-2}\sigma_{s-1}-\sigma_{r-1}\sigma_{s-2},\\
 Q&=\sigma_{r-2}\sigma_s-\sigma_r\sigma_{s-2},
 &R&=\sigma_{r-1}\sigma_s-\sigma_r\sigma_{s-1},\\
 t&=Q/P,&u&=R/P.
\end{aligned}
\end{equation}
All symmetric polynomials in these expressions are evaluated at
$z$. Newton's inequalities give $P>0$ and $Q,R\geq0$.
We write $t_i=\partial_it$, $u_i=\partial_iu$, and $z|i$ for
the vector obtained by deleting $z_i$.

\begin{lemma}\label{alg-coordinate-bounds}
For every positive $z$,
\begin{equation}\label{alg-coordinate-derivatives}
 0\leq t_i\leq1,\qquad
 0\leq u_i\leq
 \begin{cases}
 t,&r-s\geq2,\\
 2t,&r-s=1.
 \end{cases}
\end{equation}
In particular, if $N=n-2$, $r+s\geq n$, and $z\in(0,1]^N$, then
\begin{equation}\label{alg-ratio-size}
\begin{aligned}
 0&\leq u\leq\frac{(n-r)(n-s)}{rs}\leq1,
\end{aligned}
\end{equation}
and if $r=s+1$, we have 
 $0\leq t\leq\frac{2(n-s-1)}{s+1}<2$.
\end{lemma}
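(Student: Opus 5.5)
Our plan is to express $t$ and $u$ through Newton-type determinants and differentiate one variable at a time, using the standard splitting $\sigma_j(z)=\sigma_j(z|i)+z_i\sigma_{j-1}(z|i)$. Each of $P,Q,R$ is a $2\times2$ determinant in the $\sigma$'s. Hence, as functions of $z_i$, the numerator and denominator of $t=Q/P$ are quadratic polynomials in $z_i$ whose coefficients are the analogous expressions built from $z|i$. We will therefore write
\begin{equation*}
 P=P_0+z_iP_1+z_i^2P_2,\qquad Q=Q_0+z_iQ_1+z_i^2Q_2,
\end{equation*}
with $P_0,Q_0$ the same determinants evaluated at $z|i$ and $P_2,Q_2$ those with all indices lowered by one. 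A direct check of the determinant structure should show that $P_2$ and $Q_2$ vanish; the cross terms telescope exactly as in the proof that $\sigma_{j}/\sigma_{j-1}$ is concave. This leaves $P$ and $Q$ affine in $z_i$. Then
\begin{equation*}
 t_i=\frac{Q_1P_0-Q_0P_1}{P^2}.
\end{equation*}
For $t_i\ge0$ we need $Q_1P_0\ge Q_0P_1$. For $t_i\le1$ we need $P^2\ge Q_1P_0-Q_0P_1$. Both are Newton--Maclaurin type inequalities for $z|i$, and we will prove them by induction on $N$. The same scheme applies to $u=R/P$. The extra factor $2$ when $r-s=1$ arises because $R$ then degenerates to $\sigma_s^2-\sigma_{s+1}\sigma_{s-1}$, which loses one cancellation.

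An alternative route to $t_i\ge0$ is to write $t$ as a ratio of "shifted" quotient functions. For example, when $r-s=1$, $Q/P$ resembles $\sigma_{s-1}/\sigma_{s-2}$-type quotients. Monotonicity would then follow from monotonicity of $\sigma_j/\sigma_{j-1}$, for which $\partial_i(\sigma_j/\sigma_{j-1})\in[0,1]$ is classical. We intend to reduce the general case to telescoping products of such adjacent quotients if the determinantal bookkeeping becomes unwieldy.

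For \eqref{alg-ratio-size}, we integrate the coordinate bounds along the segment from $0$ to $z$. Since $u$ is increasing in each variable, $u(z)\le u(\mathbf 1)$ when $z\in(0,1]^N$, and similarly for $t$. It then remains to evaluate at $z=\mathbf 1$ with $\sigma_j(\mathbf 1)=\binom Nj$, $N=n-2$. A binomial computation should give
\begin{equation*}
 u(\mathbf 1)=\frac{(n-r)(n-s)}{rs},\qquad t(\mathbf 1)=\frac{2(n-s-1)}{s+1}\quad\text{for }r=s+1.
\end{equation*}
The constraint $r+s\ge n$ then yields $(n-r)(n-s)\le rs$. In the case $r=s+1$, the inequality $2s+1\ge n$ gives $2(n-s-1)<2(s+1)$. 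We also need to handle degenerate index ranges where some $\sigma$'s vanish, for instance $r>N$.

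The main obstacle is the upper bound $t_i\le1$ (and $u_i\le t$), i.e. the second-order Newton-type inequality for the affine coefficients. We would first try to prove it by the induction on $N$ above, deleting one more variable and using the one-variable formula. If that fails, we would fall back on Sylvester's identity to rewrite $Q_1P_0-Q_0P_1$ as a product of two Newton determinants of $z|i$.
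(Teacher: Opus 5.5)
There is a genuine gap. Your plan rests on the claim that the $z_i^2$-coefficients $P_2,Q_2$ vanish, so that $P$ and $Q$ are affine in $z_i$. This is false. Put $x=z|i$, $\tau=z_i$ and $\sigma_j(z)=\sigma_j(x)+\tau\sigma_{j-1}(x)$. Each of $P,Q,R$ is then a difference of two products of affine functions of $\tau$, and the $\tau^2$-terms do not cancel. (In the concavity proof for $\sigma_j/\sigma_{j-1}$, numerator and denominator are single $\sigma$'s, which is why they are affine there.)

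Concretely, the $\tau^2$-coefficient of $P$ is $\gamma=\sigma_{r-3}\sigma_{s-2}-\sigma_{r-2}\sigma_{s-3}$, evaluated at $x$. It is positive for $s\ge2$ by Newton's inequalities. The $\tau^2$-coefficient of $Q$ is $\sigma_{r-3}\sigma_{s-1}-\sigma_{r-1}\sigma_{s-3}$, which equals the $\tau$-coefficient $\beta$ of $P$. Already for $(r,s)=(3,2)$ (the case $W_3$ versus $W_2$ when $n=5$) one has $P=\sigma_1^2-\sigma_2=\sum_jz_j^2+\sigma_2$.

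So your formula $t_i=(Q_1P_0-Q_0P_1)/P^2$ is wrong. Writing $P=\alpha+\beta\tau+\gamma\tau^2$, $Q=\eta+\theta\tau+\beta\tau^2$ and $R=\delta+\eta\tau+\alpha\tau^2$, one gets
\begin{equation*}
 PQ'-QP'=(\alpha\theta-\beta\eta)+2(\alpha\beta-\gamma\eta)\tau+(\beta^2-\gamma\theta)\tau^2 .
\end{equation*}
Your constant-term inequality is therefore only one of three conditions, and the same holds for $PR'-RP'$. Moreover, the coefficient you discard is exactly what carries the upper bound, through the identity $P^2(1-t_i)=\gamma(R+\tau Q+\tau^2P)$.

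Beyond this, the proposal never supplies the mechanism that makes these coefficient combinations nonnegative. ``Newton--Maclaurin type inequalities proved by induction on $N$'' is not an argument. The conditions are not the classical Newton--Maclaurin inequalities but sign conditions on $3\times3$ minors.

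The paper's key input is total nonnegativity of the Toeplitz matrix $(\sigma_{p-q})_{p,q}$. It factors into bidiagonal matrices $(\delta_{pq}+z_h\delta_{p,q+1})$, so by Cauchy--Binet its $3\times3$ minors are nonnegative. Explicit identities then write $\alpha\theta-\beta\eta=\alpha^2-\gamma\delta$, $\alpha\beta-\gamma\eta$, $\beta^2-\gamma\theta$ and $\alpha\eta-\beta\delta$ as $\sigma$-weighted sums of such minors. The upper bounds come from identities like the one above and $P^2(t-u_i)=P[(\theta-2\alpha)\tau+\beta\tau^2]+R(\beta+2\gamma\tau)$. The factor $2$ for $r-s=1$ arises because $\theta-2\alpha$ then contains the nonpositive term $\sigma_{s-2}\sigma_s-\sigma_{s-1}^2$, so only $\theta\ge\alpha$ survives.

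Your Sylvester-identity fallback points in the right direction, since Desnanot--Jacobi converts products of $2\times2$ Newton determinants into $3\times3$ minors. You would still need the sign of those minors, and that is precisely the total positivity statement. The alternative via telescoping products of adjacent quotients does not apply, because $t$ and $u$ are ratios of $2\times2$ Toeplitz minors, not products of adjacent quotients.

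The second part of your plan is fine once monotonicity is established. The values $u(\mathbf 1)=(n-r)(n-s)/(rs)$ and $t(\mathbf 1)=2(n-s-1)/(s+1)$ are correct, and the remaining arithmetic is right.
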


\begin{proof}
We use the minors
\begin{equation*}
 \Delta_{p,q}=\sigma_{p-1}\sigma_q-\sigma_p\sigma_{q-1},\qquad
 \Delta_{p,q,j}=
 \det\begin{pmatrix}
 \sigma_p&\sigma_{p-1}&\sigma_{p-2}\\
 \sigma_q&\sigma_{q-1}&\sigma_{q-2}\\
 \sigma_j&\sigma_{j-1}&\sigma_{j-2}
 \end{pmatrix}.
\end{equation*}
The second minor is nonnegative for $0\leq p\leq q\leq j$.
Indeed, the finite matrix $(\sigma_{p-q})_{0\leq p,q\leq L}$,
with $L\geq\max\{j,2\}$, factors as
\begin{equation*}
 (\sigma_{p-q})_{p,q}
 =\prod_h(\delta_{pq}+z_h\delta_{p,q+1})_{p,q}.
\end{equation*}
Each bidiagonal factor has nonnegative minors, since a nonzero
permutation term must preserve the row and column order.
Cauchy--Binet gives the assertion for the product.

Fix $i$ and put $\tau=z_i$. In the remainder of this proof,
coefficients and minors are evaluated on $x=z|i$. The insertion
identity $\sigma_j(z)=\sigma_j(x)+\tau\sigma_{j-1}(x)$ gives
\begin{equation}\label{alg-insertion}
\begin{aligned}
 P(\tau)&=\alpha+\beta\tau+\gamma\tau^2,\\
 Q(\tau)&=\eta+\theta\tau+\beta\tau^2,
 &R(\tau)&=\delta+\eta\tau+\alpha\tau^2,\\
 \alpha&=\Delta_{r-1,s-1},&\gamma&=\Delta_{r-2,s-2},
 \qquad\delta=\Delta_{r,s},\\
 \beta&=\Delta_{r-2,s-1}+\Delta_{r-1,s-2},
 &\eta&=\Delta_{r-1,s}+\Delta_{r,s-1},\\
 \theta&=2\alpha+\Delta_{r-2,s}+\Delta_{r,s-2}.
\end{aligned}
\end{equation}
Set $\Delta_1=\Delta_{s-1,s,r-1}$ and
$\Delta_2=\Delta_{s-1,r-1,r}$. Expansion yields
\begin{equation}\label{alg-minor-identities}
\begin{aligned}
 \alpha\theta-\beta\eta
 &=\alpha^2-\gamma\delta
  =\sigma_{s-1}\Delta_2+\sigma_{r-1}\Delta_1\geq0,\\
 \alpha\beta-\gamma\eta
 &=\sigma_{r-2}\Delta_1+\sigma_{s-2}\Delta_2\geq0,\\
 \beta^2-\gamma\theta
 &=\sigma_{r-3}\Delta_1+\sigma_{s-3}\Delta_2\geq0,\\
 \alpha\eta-\beta\delta
 &=\sigma_{s-1}\Delta_{s,r-1,r}
   +\sigma_{r-1}\Delta_{s-1,s,r}\geq0.
\end{aligned}
\end{equation}
Primes denote derivatives in $\tau$. Consequently,
\begin{equation}\label{alg-derivative-numerators}
\begin{aligned}
 PQ'-QP'&=(\alpha^2-\gamma\delta)
      +2(\alpha\beta-\gamma\eta)\tau
      +(\beta^2-\gamma\theta)\tau^2\geq0,\\
 PR'-RP'&=(\alpha\eta-\beta\delta)
      +2(\alpha^2-\gamma\delta)\tau
      +(\alpha\beta-\gamma\eta)\tau^2\geq0.
\end{aligned}
\end{equation}
These prove $t_i,u_i\geq0$. The upper bounds follow from
\begin{equation*}
 P^2(1-t_i)=\gamma(R+\tau Q+\tau^2P)\geq0
\end{equation*}
and
\begin{equation*}
\begin{aligned}
 P^2(t-u_i)
 &=P[(\theta-2\alpha)\tau+\beta\tau^2]
    +R(\beta+2\gamma\tau)\geq0\quad(r-s\geq2),\\
 P^2(2t-u_i)
 &=P[\eta+2(\theta-\alpha)\tau+2\beta\tau^2]
    +R(\beta+2\gamma\tau)\geq0\quad(r-s=1).
\end{aligned}
\end{equation*}
Here $\theta\geq2\alpha$ in the first case and
$\theta\geq\alpha$ in the second. Finally, evaluation at
$z=\boldsymbol1$ and coordinatewise monotonicity give
\eqref{alg-ratio-size}.
\end{proof}

\begin{lemma}\label{alg-weighted-tail-bounds}
Suppose $N=n-2$, $r+s\geq n$, and $z\in(0,1]^N$. Let
$z^{(i)}$ be obtained from $z$ by replacing $z_i$ by $1$, and put
\begin{equation}\label{alg-weights}
 \omega_i=\frac{P(z)}{P(z^{(i)})}.
\end{equation}
Then
\begin{equation}\label{alg-weighted-bounds}
 \sum_i\omega_it_i\leq t+2,\qquad
 \sum_i\omega_iu_i\leq4.
\end{equation}
\end{lemma}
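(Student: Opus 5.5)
The strategy is to treat each coordinate separately and bound $\omega_i t_i$ and $\omega_i u_i$ using the one-variable structure in \eqref{alg-insertion}. Fix $i$, write $\tau=z_i\in(0,1]$, and put $x=z|i$. Then $P(z)=P(\tau)$ and $P(z^{(i)})=P(1)$, where $P(\tau)=\alpha+\beta\tau+\gamma\tau^2$ has nonnegative coefficients. This gives
\begin{equation*}
 \omega_i=\frac{P(\tau)}{P(1)}\leq1 .
\end{equation*}
From \eqref{alg-derivative-numerators} we have the closed forms
\begin{equation*}
 t_i=\frac{PQ'-QP'}{P^2},\qquad u_i=\frac{PR'-RP'}{P^2}.
\end{equation*}
Hence
\begin{equation*}
 \omega_it_i=\frac{(\alpha^2-\gamma\delta)+2(\alpha\beta-\gamma\eta)\tau+(\beta^2-\gamma\theta)\tau^2}{P(\tau)P(1)},
\end{equation*}
with the analogous formula for $\omega_iu_i$. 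The idea is that the weight $\omega_i$ cancels one factor of $P(\tau)$ from the denominator, which leaves a quantity whose dependence on $\tau$ is controlled.

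\textbf{Per-coordinate bounds.} The first goal is an estimate of the form
\begin{equation*}
 \omega_it_i\leq \frac{Q(z^{(i)})-Q(z)}{P(z^{(i)})}+(\text{something summable}),
\end{equation*}
so that summing over $i$ telescopes or compares with $t$. Concretely, I will compare $\omega_it_i$ with the difference quotient $t(z^{(i)})-t(z)$, or with $\partial_\tau$ of $Q/P(1)$. Since the numerator is a quadratic in $\tau$ with nonnegative coefficients and $P(\tau)\geq\alpha$, I will bound
\begin{equation*}
 \frac{PQ'-QP'}{P(\tau)P(1)}\leq\frac{Q'(\tau)}{P(1)}+\frac{Q\,|P'|}{P(\tau)P(1)}.
\end{equation*}
I will then use $Q'(\tau)=\theta+2\beta\tau\leq\theta+2\beta$ and $P'\leq\beta+2\gamma$.

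\textbf{Summing.} The key identities I expect to need are the Euler-type sums over $i$ of $\sigma_{j-1}(z|i)$, namely $\sum_i\sigma_{j-1}(z|i)=(N-j+1)\sigma_{j-1}(z)$, together with their analogues for the minors $\Delta_{p,q}(z|i)$. These let me express
\begin{equation*}
 \sum_i\frac{\theta_i+2\beta_i}{P(z^{(i)})}
\end{equation*}
in terms of quantities evaluated at the all-ones point. The ratios $Q/P$ and $R/P$ are coordinatewise increasing by Lemma~\ref{alg-coordinate-bounds}, so quantities at $z^{(i)}$ dominate those at $z$. The hypothesis $r+s\geq n$ (with $N=n-2$) should enter exactly as it does in \eqref{alg-ratio-size}, bounding the $\sigma$-ratio constants at $\boldsymbol1$. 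The outcome should be the constant $2$ for $t$ and the constant $4$ for $u$. For the $u$ bound, I will use $u_i\leq t$ or $u_i\leq2t$ from \eqref{alg-coordinate-derivatives} only where needed, and otherwise mimic the $t$ computation with $(\delta,\eta,\alpha)$ in place of $(\eta,\theta,\beta)$.

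\textbf{Main obstacle.} The delicate step is obtaining dimension-free constants ($t+2$ and $4$) rather than bounds growing with $N$. Naively each $\omega_it_i\leq1$, and summing gives only $N$. The plan is therefore to show that $\sum_i\omega_it_i$ is bounded by a discrete derivative that telescopes, for instance
\begin{equation*}
 \omega_it_i\leq\bigl(t(z^{(i)})-t(z)\bigr)+c\,(1-z_i)\cdot(\cdots),
\end{equation*}
and then to control the sum of the differences by the total variation of $t$ along the path $z\to\boldsymbol1$ replacing coordinates one by one. The bound $t(\boldsymbol1)\leq$ const, valid under $r+s\geq n$, together with $t\geq0$ then gives $t(\boldsymbol1)-t(z)+t$-type terms, which is where the $t+2$ comes from. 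If the telescoping fails for the unnormalized quotient, I would instead reduce by symmetry and monotonicity to the extremal configuration in which all $z_j$ equal a common value $\mu\in(0,1]$, and verify the inequality there by direct one-variable computation.
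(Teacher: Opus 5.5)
There is a genuine gap. Nothing in your plan produces constants independent of $N$, and the one concrete estimate you write down provably does not.

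\textbf{Your crude bound fails.} Bounding $\omega_it_i$ by $Q'(\tau)/P(1)+Q|P'|/(P(\tau)P(1))$ discards the $-QP'$ term, and that term carries exactly the cancellation that makes $t_i$ small. Take $s=1$, $r=n-1$, so $r+s=n$. Then $P=\sigma_{N-1}$, $Q=\sigma_{N-1}\sigma_1$, $t=\sigma_1$ and $t_i=1$. Put $z=\epsilon\boldsymbol1$ with $\epsilon=1/N$. Then $\partial_iQ=N^2\epsilon^{N-1}$ and $P(z^{(i)})=\epsilon^{N-1}+(N-1)\epsilon^{N-2}$, so $\sum_i\partial_iQ/P(z^{(i)})=N^3\epsilon/(N-1+\epsilon)$, which is of order $N$. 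By contrast, $\sum_i\omega_it_i=N^2\epsilon/(N-1+\epsilon)\approx1$, and the target is $t+2=3$.

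\textbf{Telescoping fails on the same example.} $\sum_i\bigl(t(z^{(i)})-t(z)\bigr)=\sum_i(1-z_i)=N-t$, which is unbounded. These differences all start from the same base point, so they do not telescope along any path to $\boldsymbol1$. Moreover $t(\boldsymbol1)=N$ here, so $r+s\ge n$ cannot enter through bounded constants at $\boldsymbol1$; that only happens for $u$, and for $t$ when $r=s+1$.

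\textbf{The symmetric reduction is unsupported.} Nothing in your plan supplies a monotonicity or Schur-convexity property that would justify reducing to equal coordinates.

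\textbf{The missing ideas.} The paper proceeds as follows.
\begin{itemize}
\item It uses degree-one homogeneity, $\sum_iz_it_i=t$. This reduces the first bound to $\sum_i(\omega_i-z_i)t_i\le2$, where $\omega_i-z_i=(1-z_i)(\alpha-\gamma z_i)/P(z^{(i)})$ vanishes at $z_i=1$.
\item When $n=r+s$, every monomial of $P$ has $J$ exponents equal to $2$, $n-3-2J$ exponents equal to $1$, and exactly $J+1$ zero exponents, with coefficient $p_J>0$. Splitting each monomial equally among its zero coordinates gives $P=\sum_iC_i$.
\item It then proves the per-coordinate bounds $(\omega_i-z_i)t_i\le2C_i/P$ and $\omega_iu_i\le4C_i/P$. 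Their sums are dimension-free automatically because $\sum_iC_i=P$.
\item These bounds rest on writing $\alpha,\beta,\gamma,\delta,\eta$ and $g=C_i$ through the same monomials. This gives $\delta\le\alpha-g$, $\eta\le\beta$, $\alpha/s\le g\le\alpha$, $\gamma(\delta+g)\ge(\alpha-g)^2$ and $\alpha\beta-\gamma\eta\le g(\beta+2\gamma)$.
\item For $u$ there is no Euler subtraction. Since $PR'-RP'$ has nonnegative coefficients in $\tau$, one only needs to check $\tau=1$, via an explicit identity.
\item The case $r+s>n$ follows by appending $r+s-n$ coordinates equal to $\varepsilon$ and letting $\varepsilon\downarrow0$.
\end{itemize}
The hypothesis $r+s\ge n$ is used through this exact monomial structure and the padding step, not through bounds at $\boldsymbol1$.
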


\begin{proof}
We first assume $n=r+s$. A monomial of $P$ with
$J$ exponents equal to two has $n-3-2J$ exponents equal to one
and $J+1$ zero exponents. Its coefficient is
\begin{equation}\label{alg-monomial-coefficients}
 p_J=\binom{n-3-2J}{r-2-J}-\binom{n-3-2J}{r-1-J}
 =\frac{(r-s)(n-3-2J)!}{(r-1-J)!(s-1-J)!}>0,
 \quad 0\leq J\leq s-1.
\end{equation}
Distribute each monomial equally among its zero coordinates,
\begin{equation}\label{alg-monomial-allocation}
 C_i=\sum_{m_i=0}\frac{p_Jz^{\mathbf m}}{J+1}>0,
 \qquad \sum_iC_i=P.
\end{equation}
Here $J$ is the number of exponents equal to two in
$\mathbf m$. The two binomial coefficients in
\eqref{alg-monomial-coefficients} count assignments to the
first factor of each product defining $P$.
Since $t$ is homogeneous of degree one, the desired bounds follow
from the coordinate estimates
\begin{equation*}
 (\omega_i-z_i)t_i\leq\frac{2C_i}{P},\qquad
 \omega_i u_i\leq\frac{4C_i}{P}.
\end{equation*}
Indeed, summation uses $\sum_i z_it_i=t$ and $\sum_iC_i=P$.

Fix $i$, write $x=z|i$ and $g=C_i$, and use the coefficients
and minors in \eqref{alg-insertion}. On $x$, the exponent vectors below
have $J$ double exponents, $J$ zero exponents, and
$n-3-2J$ single exponents. Counting gives
\begin{equation}\label{alg-allocation-identities}
\begin{aligned}
 g&=\sum_{\mathbf m}\frac{p_Jx^{\mathbf m}}{J+1},
 &\alpha&=\sum_{\mathbf m}p_Jx^{\mathbf m},\\
 \gamma&=\sum_{\mathbf m}\frac{p_Jx^{\mathbf m}}{J+1}
                   \sum_{m_h=2}x_h^{-2},
 &\delta&=\sum_{\mathbf m}\frac{p_Jx^{\mathbf m}}{J+1}
                   \sum_{m_h=0}x_h^2,\\
 \beta&=\sum_{\mathbf m}\frac{p_Jx^{\mathbf m}}{J+1}
                   \sum_{m_h=1}x_h^{-1},
 &\eta&=\sum_{\mathbf m}\frac{p_Jx^{\mathbf m}}{J+1}
                   \sum_{m_h=1}x_h.
\end{aligned}
\end{equation}
For the last four identities, change one exponent by
$2\to0$, $0\to2$, $1\to0$, or $1\to2$, respectively.
Each resulting monomial has $J+1$ reverse choices, cancelling
the divisor. In particular,
\begin{equation}\label{alg-allocation-bounds}
 \delta\leq\alpha-g,\qquad \eta\leq\beta,\qquad
 \alpha\geq g\geq\alpha/s>0.
\end{equation}
We also claim
\begin{equation}\label{alg-product-bounds}
 \gamma(\delta+g)\geq(\alpha-g)^2,\qquad
 \alpha\beta-\gamma\eta\leq g(\beta+2\gamma).
\end{equation}

For the first estimate, group the monomials by $J$ and
their single-exponent coordinates. On the remaining $2J$
coordinates set $y_h=x_h^2$. Each group contributes
\begin{equation*}
 c\sigma_J(y),\quad cJ\sigma_J(y),\quad
 c(J+1)\sigma_{J-1}(y),\quad c(J+1)\sigma_{J+1}(y)
\end{equation*}
to $g$, $\alpha-g$, $\gamma$, and $\delta$, respectively,
where $c>0$. For $J\geq1$, monotonicity and degree-one
homogeneity of $\sigma_J/\sigma_{J-1}$, together with
$0<y_h\leq1$, give
\begin{equation*}
 \sum_h y_h^2\partial_h\log\frac{\sigma_J}{\sigma_{J-1}}
 \leq1.
\end{equation*}
The identity
\begin{equation}\label{alg-extension-count}
 \sum_h y_h^2\partial_h\sigma_j
 =\sigma_1\sigma_j-(j+1)\sigma_{j+1}
\end{equation}
then gives
\begin{equation*}
 (J+1)\sigma_{J-1}
       [(J+1)\sigma_{J+1}+\sigma_J]\geq J(J+1)\sigma_J^2.
\end{equation*}
For $J=0$, the contributions to $\alpha-g$, $\gamma$, and
$\delta$ vanish. Cauchy--Schwarz over the groups proves the
first estimate.

For the second estimate, if $s=1$, then $g=\alpha$ and
$\gamma=0$, so equality holds. Suppose $s\geq2$. Then
$\gamma>0$, and Lemma~\ref{alg-coordinate-bounds}, applied with
indices $(r-1,s-1)$ on $x$, shows that $\alpha/\gamma$ is increasing.
Since it has degree two,
\begin{equation*}
 \sum_hx_h^2\partial_h\log\frac{\alpha}{\gamma}\leq2.
\end{equation*}
Using \eqref{alg-extension-count} and expanding the minors,
we obtain the identity
\begin{equation*}
\begin{aligned}
 s(\alpha\beta-\gamma\eta)
 &=\alpha\beta
   +\alpha\gamma\sum_hx_h^2\partial_h\log\frac{\alpha}{\gamma}
   -(r-s)\sigma_{s-2}\Delta_2\\
 &\leq\alpha(\beta+2\gamma)
 \leq sg(\beta+2\gamma).
\end{aligned}
\end{equation*}
This proves \eqref{alg-product-bounds}.

We now estimate the derivatives in the fixed coordinate
$\tau=z_i$. The insertion identities give
\begin{equation*}
\begin{aligned}
 PQ'-QP'
 &=(\alpha^2-\gamma\delta)
       \left(1-\frac{\gamma\tau^2}{\alpha}\right)
   +(\alpha\beta-\gamma\eta)
       \left(2\tau+\frac{\beta\tau^2}{\alpha}\right),\\
 \omega_i-\tau&=\frac{(1-\tau)(\alpha-\gamma\tau)}{P(1)}.
\end{aligned}
\end{equation*}
Since $PQ'-QP'\geq0$ and $0<\tau\leq1$, we have
\begin{equation*}
 (1-\tau)(\alpha-\gamma\tau)(PQ'-QP')
 \leq\alpha(1-\tau)(PQ'-QP').
\end{equation*}
The two estimates in \eqref{alg-product-bounds} give
\begin{equation*}
 \alpha^2-\gamma\delta\leq g(2\alpha+\gamma-g),\qquad
 \alpha\beta-\gamma\eta\leq g(\beta+2\gamma).
\end{equation*}
Using $1-\gamma\tau^2/\alpha\leq1$ in the expression for
$PQ'-QP'$, followed by expansion and omission of nonnegative
terms, yields
\begin{equation*}
\begin{aligned}
 &2gP(1)P(\tau)
       -(1-\tau)(\alpha-\gamma\tau)(PQ'-QP')\\
 \geq&
 g[2\alpha^2\tau+\alpha g(1-\tau)
          +\alpha\gamma(1-3\tau+6\tau^2)]\\
          >&0.
\end{aligned}
\end{equation*}
Division by $P(1)P(\tau)^2$ proves
$(\omega_i-z_i)t_i\leq2C_i/P$.

For the second coordinate estimate,
\eqref{alg-derivative-numerators} shows that $PR'-RP'$ has
nonnegative coefficients. It is therefore enough to bound its
value at $\tau=1$. Direct expansion gives
\begin{equation}\label{alg-u-endpoint-difference}
\begin{aligned}
 &\alpha\bigl[4gP(1)-(PR'-RP')(1)\bigr]\\
 =&(\alpha-g-\delta)
       \bigl[g(\beta+2\gamma)-(\alpha\beta-\gamma\eta)\bigr]\\
 &+(2\alpha+\eta)
       \bigl[\gamma(\delta+g)-(\alpha-g)^2\bigr]\\
 &+2g\bigl[\alpha g+\gamma(\delta+g)\bigr]
       +g(\delta+2g)\eta\\
 &+g(2\alpha+\delta+g)(\beta-\eta)>0.
\end{aligned}
\end{equation}
All terms are nonnegative by \eqref{alg-allocation-bounds}
and \eqref{alg-product-bounds}, and $2\alpha g^2>0$.
Consequently,
\begin{equation*}
 \omega_i u_i
 =\frac{PR'-RP'}{P(\tau)P(1)}\leq\frac{4C_i}{P(\tau)}.
\end{equation*}
This proves both coordinate estimates and hence
\eqref{alg-weighted-bounds} when $n=r+s$, including $s=1$.

Finally, suppose $r+s>n$. Append $r+s-n$ coordinates equal
to $\varepsilon\in(0,1]$ to $z$ and apply the bounds in
dimension $r+s$. All added summands are nonnegative and may
be discarded. As $\varepsilon\downarrow0$, the polynomials
and their derivatives in the original coordinates converge.
The limiting denominators are positive by Newton's
inequalities. This includes $r=n$, for which
$P=\sigma_{n-2}\sigma_{s-1}>0$ on the original vector.
The same applies to $P(z^{(i)})$. Passing to the limit proves
\eqref{alg-weighted-bounds} in every stated dimension.
\end{proof}

\subsection{Proof of the derivative estimate}\label{subsec-alg-completion}

\begin{proof}[Proof of Lemma~\ref{alg-derivative-estimate}]
Let $\lambda=(a,1,z)$, where $a>1$, and assume $r+s\geq n$. Expanding the symmetric
polynomials in the first two variables gives
\begin{equation*}
 \sigma_s\partial_1\sigma_r-\sigma_r\partial_1\sigma_s
 =P+Q+R,
 \qquad
 \frac{q_1}{q_2}
 =\frac{1+t+u}
        {a^2+at+u}.
\end{equation*}
For brevity, set
\begin{equation}\label{alg-scalar-quantities}
    \mathsf U=at+(a+1)u,\qquad
    \mathsf S=1+t+u,\qquad
    \mathsf T=a+1+t,\qquad
    \mathsf C=a^2+at+u,
\end{equation}
and
\begin{equation*}
 \mathsf H=a\mathsf S(2a+t)+\mathsf C(t+2),
 \qquad
 \mathsf K=a(a+1)t+a(3a-1)+(a+1)u.
\end{equation*}
Thus $q_1/q_2=\mathsf S/\mathsf C$. Since $Dq[v]=0$,
\[
 c_i=-a q_2\partial_i(q_1/q_2).
\]
Differentiating the quotient before setting $(\lambda_1,\lambda_2)=(a,1)$
yields
\begin{equation}\label{alg-auxiliary-coefficients}
 \begin{aligned}
 c_1&=\frac{a q_2\mathsf S(2a+t)}{\mathsf C^2},
 &c_2&=-\frac{a q_2(t+2)}{\mathsf C},\\
 c_i&=-\frac{a q_2(a-1)}{\mathsf C^2}
       \bigl[(a-u)t_i+\mathsf T u_i\bigr],
 &&i\geq3.
 \end{aligned}
\end{equation}
Consequently,
\begin{equation}\label{alg-auxiliary-identities}
 \begin{aligned}
 c_1-\frac{c_2}{a}
 &=\frac{q_2\mathsf H}{\mathsf C^2},\\
 -D^2q[v,v]
 &=\frac{2a^2q_2\mathsf S\mathsf T}{\mathsf C^2}>0,\\
 \frac2{a-1}\left(q_1-\frac{q_2}{a^2}\right)
 &=\frac{2q_2\mathsf U}{a^2\mathsf C}.
 \end{aligned}
\end{equation}

For $i\geq3$, the numerator
$\sigma_s\partial_i\sigma_r-\sigma_r\partial_i\sigma_s$
does not depend on $z_i$. Replacing $z_i$ by $1$ and using symmetry,
together with the monotonicity of $t$ and $u$, gives
\begin{equation*}
 \frac{q_i}{q_2}
 =\frac{P(z^{(i)})
 [a^2+a\,t(z^{(i)})+u(z^{(i)})]}{P(z)\mathsf C}
 \geq\frac1{\omega_i}.
\end{equation*}
It follows that
\begin{equation}\label{alg-tail-correction}
 \sum_{i=3}^n\frac{a-z_i}{2q_i}c_i^2
 \leq\frac{a^3q_2(a-1)^2}{2\mathsf C^4}
 \sum_{i=3}^n\omega_i
 \bigl[(a-u)t_i+\mathsf T u_i\bigr]^2.
\end{equation}
By Lemmas~\ref{alg-coordinate-bounds} and
\ref{alg-weighted-tail-bounds},
\begin{equation}\label{alg-tail-square-bound}
 \sum_{i=3}^n\omega_i
 \bigl[(a-u)t_i+\mathsf T u_i\bigr]^2
 \leq
 (a-u)^2(t+2)+4\chi\mathsf T^2t+8(a-u)\mathsf T,
\end{equation}
where $\chi=1$ if $r-s\geq2$ and $\chi=2$ if $r-s=1$.
Indeed, use $t_i^2\leq t_i$, $u_i^2\leq\chi t u_i$,
and $t_i u_i\leq u_i$. In the second case, $t<2$.
Also $0\leq u\leq1$.

It remains to verify the scalar inequality
\begin{equation}\label{alg-final-scalar-bound}
 \mathsf C\bigl(\mathsf H^2-4\mathsf U\mathsf S\mathsf T\mathsf C\bigr)
 >a\mathsf U(a-1)^2
 \sum_{i=3}^n\omega_i
 \bigl[(a-u)t_i+\mathsf T u_i\bigr]^2.
\end{equation}
The key factorization is
\begin{equation}\label{alg-scalar-factorization}
 \mathsf H^2-4\mathsf U\mathsf S\mathsf T\mathsf C
 =4(a-u)\mathsf S\mathsf K\mathsf T
 +(a-1)^2(a-u)^2(t+2)^2.
\end{equation}
Moreover,
\begin{equation*}
 (t+2)\mathsf C-a\mathsf U
 =at^2+2at+(2-u)a^2-ua+u(t+2)>0,
\end{equation*}
since $(2-u)a^2-ua\geq a^2-a>0$. Thus the second term in
\eqref{alg-scalar-factorization}, after multiplication by $\mathsf C$,
strictly dominates the part of the right-hand side of
\eqref{alg-final-scalar-bound} containing $(a-u)^2(t+2)$.
In view of \eqref{alg-tail-square-bound}, it is therefore enough to show
\begin{equation}\label{alg-scalar-reduced}
 \mathsf C(a-u)\mathsf S\mathsf K
 >a\mathsf U(a-1)^2
 \bigl[\chi\mathsf Tt+2(a-u)\bigr].
\end{equation}

If $\chi=1$, then
\[
 \mathsf U\leq a\mathsf S,\quad a-u\geq a-1,\quad
 \mathsf C\geq a(a+t),\quad
 \mathsf K\geq a\bigl[(a+1)t+3a-1\bigr].
\]
Hence \eqref{alg-scalar-reduced} follows from
\begin{equation*}
 \begin{aligned}
 &(a+t)\bigl[(a+1)t+3a-1\bigr]
 -(a-1)\mathsf Tt-2(a-1)^2\\
 =&2t^2+4at+a^2+3a-2>0.
 \end{aligned}
\end{equation*}

If $\chi=2$, then $0\leq t\leq2$, and
\begin{equation*}
 \frac{\mathsf U}{a\mathsf S}
 \leq1-\frac{a-1}{a(t+2)},\qquad
 (a-1)^2\left(\frac{\mathsf Tt}{a-u}+1\right)
 \leq a(a+t)(t+1).
\end{equation*}
Furthermore,
\begin{equation*}
 (a+1)t+3a-1
 -2\left(1-\frac{a-1}{a(t+2)}\right)a(t+1)
 =\frac{(a-1)(4+t-t^2)}{t+2}>0.
\end{equation*}
Consequently,
\begin{equation*}
 \frac{\mathsf C\mathsf K}{a^2}
 \geq(a+t)\bigl[(a+1)t+3a-1\bigr]
 >2\frac{\mathsf U}{a\mathsf S}(a-1)^2
 \left(\frac{\mathsf Tt}{a-u}+1\right),
\end{equation*}
which proves \eqref{alg-scalar-reduced}. Thus
\eqref{alg-final-scalar-bound} holds, and Lemma~\ref{alg-derivative-estimate}
is proved.
\end{proof}

\section*{Acknowledgements}
This work was supported by the National Key Research and Development Program of China (2021YFA1001800), National Natural Science Foundation of China (No.~12531002) and the Fundamental Research Funds for the Central Universities. The third author was supported by NSFC Grant No. 12601099. The last author was also supported by the China Postdoctoral Science Foundation (Grant No.~2025M783146).

\end{document}